\newif\ifsubsections
	\definecolor{linkred}{rgb}{0.7,0.2,0.2}
	\definecolor{linkblue}{rgb}{0,0.2,0.6}
	\definecolor{linkred}{rgb}{0.0,0.0,0.0}
	\definecolor{linkblue}{rgb}{0,0.0,0.0}
\newtheorem{Theorem}[equation]{Theorem}
\newtheorem{Corollary}[equation]{Corollary}
\newtheorem{Lemma}[equation]{Lemma}
\newtheorem{Proposition}[equation]{Proposition}
\theoremstyle{definition}
\newtheorem{Definition}[equation]{Definition}
\newtheorem{Example}[equation]{Example}
\newtheorem{History}[equation]{Historical Comment}
\newtheorem{Exercise}[equation]{Exercise}
\theoremstyle{remark}
\newtheorem{Remark}[equation]{Remark}
\newenvironment{aenume}{%
  \begin{enumerate}%
  }{\end{enumerate}}
\newenvironment{NB}{
\color{red}{\bf NB}. \footnotesize
}{}
\newcommand{\thmref}[1]{Theorem~\ref{#1}}
\newcommand{\secref}[1]{\S\ref{#1}}
\newcommand{\lemref}[1]{Lemma~\ref{#1}}
\newcommand{\propref}[1]{Proposition~\ref{#1}}
\newcommand{\corref}[1]{Corollary~\ref{#1}}
\newcommand{\subsecref}[1]{\S\ref{#1}}
\newcommand{\defref}[1]{Definition~\ref{#1}}
\newcommand{\remref}[1]{Remark~\ref{#1}}
\newcommand{\defeq}{\overset{\operatorname{\scriptstyle def.}}{=}}
\newcommand{\CC}{{\mathbb C}}
\newcommand{\RR}{{\mathbb R}}
\newcommand{\proj}{{\mathbb P}}
\newcommand{\CP}{\proj}
\newcommand{\SL}{\operatorname{\rm SL}}
\newcommand{\SU}{\operatorname{\rm SU}}
\newcommand{\GL}{\operatorname{GL}}
\newcommand{\grpSp}{\operatorname{\rm Sp}}
\newcommand{\algsl}{\operatorname{\mathfrak{sl}}} 
\newcommand{\gl}{\operatorname{\mathfrak{gl}}}
\newcommand{\g}{{\mathfrak g}}
\newcommand{\h}{{\mathfrak h}}
\newcommand{\Spec}{\operatorname{Spec}\nolimits}
\newcommand{\Proj}{\operatorname{Proj}\nolimits}
\newcommand{\End}{\operatorname{End}}
\newcommand{\Hom}{\operatorname{Hom}}
\newcommand{\Ext}{\operatorname{Ext}}
\newcommand{\Ker}{\operatorname{Ker}}
\newcommand{\Ima}{\operatorname{Im}}
\newcommand{\codim}{\mathop{\text{\rm codim}}\nolimits}
\newcommand{\rank}{\operatorname{rank}}
\newcommand{\tr}{\operatorname{tr}}
\newcommand{\id}{\operatorname{id}}
\newcommand{\ve}{\varepsilon}
\newcommand{\shfO}{\mathcal O}
\newcommand{\bA}{\mathbf A}
\renewcommand{\MR}[1]{}
\newcommand{\linf}{\ell_\infty}
\renewcommand{\AA}{{\mathbb A}}
\newcommand{\cL}{\mathcal L}
\newcommand{\Uh}[2][G]{{\mathcal U}_{#1}^{#2}}
\newcommand{\Bun}[2][G]{\operatorname{Bun}_{#1}^{#2}}
\newcommand{\UhL}[1]{\Uh[L]{#1}}
\newcommand{\UhP}[1]{\Uh[P]{#1}}
\newcommand{\Gi}[2][r]{{\widetilde{\mathcal U}_{#1}^{#2}}}
\newcommand\ZZ{\mathbb Z}
\newcommand{\cC}{\mathcal C}
\newcommand{\Perv}{\operatorname{Perv}}
\newcommand{\TT}{\mathbb T}
\newcommand{\bigzerou}{\lower1.7ex\hbox{\Huge 0}}
\newcommand{\normal}[1]{\mbox{\large\bf:}#1\mbox{\large\bf:}}
\newcommand{\bF}{\mathbf F}
\newcommand{\Heis}{\mathfrak{Heis}}
\newcommand{\Vir}{\mathfrak{Vir}}
\newcommand{\IC}{\operatorname{IC}}
\newcommand{\Stab}{\operatorname{Stab}}
\newcommand{\scF}{\mathscr F}
\newcommand{\cA}{\mathcal A}
\newcommand{\cR}{\mathcal R}
\newcommand{\cV}{\mathcal V}
\newcommand{\cF}{\mathcal F}
\newcommand{\scW}{\mathscr W}
\newcommand{\Leaf}{\operatorname{Leaf}}
\newcommand{\dslash}{/\!\!/}
\newcommand{\Gr}{\operatorname{Gr}}
\newcommand{\loc}{\mathrm{loc}}
\begin{document}

%
%
%
%
%
%

\title{Lectures on
perverse sheaves on instanton moduli spaces
}

%
%
\author{
Hiraku Nakajima
}
\address{
Research Institute for Mathematical Sciences,
Kyoto University, Kyoto 606-8502,
Japan
}
\email{
nakajima@kurims.kyoto-u.ac.jp
}
%
%
\subjclass[2010]{Primary 14D21; Secondary 14J60, 17B69}
\keywords{Park City Mathematics Institute}

%
%
\maketitle

%
%


\begin{NB}
\begin{verbatim}
\newcommand{\Uh}[2][G]{{\mathcal U}_{#1}^{#2}}
\newcommand{\Gi}[2][r]{\widetilde{\mathcal U}_{#1}^{#2}}
\end{verbatim}

\verb+\Uh{d}+ yields $\Uh{d}$.
\verb+\Gi{d}+ yields $\Gi{d}$.
\end{NB}

\setcounter{tocdepth}{1}
\tableofcontents

\section{Introduction}

Let $G$ be an almost simple simply-connected algebraic group over
$\CC$ with the Lie algebra $\g$. Let $\h$ be a Cartan subalgebra of
$\g$. We assume $G$ is of type $ADE$, as there arise technical issues
for type $BCFG$. (We will remark them at relevant places. See
footnotes~\ref{fnt:instanton_number}, \ref{fnt:restriction}.) At some
points, particularly in this introduction, we want to include the case
$G=\GL(r)$.
We will not make clear distinction between the case $G =
\SL(r)$ and $\GL(r)$ in the main text.

Let $G_c$ denote a maximal compact subgroup of $G$.
Our main player is
\begin{equation*}
    \Uh{d} = \text{the Uhlenbeck partial compactification}
\end{equation*}
of the moduli spaces of framed $G_c$-instantons on $S^4$ with
instanton number $d$. The framing means a trivialization of the fiber
of the $G_c$-bundle at $\infty\in S^4$. Framed instantons on $S^4$ are
also called instantons on $\RR^4$, as they extend across $\infty$ if
their curvature is in $L^2(\RR^4)$. We follow this convention. The
Uhlenbeck compactification were first considered in a differential
geometric context by Uhlenbeck, Donaldson and others, for more general
$4$-manifolds and usually without framing. Since we are considering
\emph{framed} instantons, we only get \emph{partial} compactification.

The Uhlenbeck compactification has been used to define differential
topological invariants of $4$-manifolds, i.e., Donaldson invariants,
as integral of cohomology classes over moduli spaces of instantons:
Moduli spaces are noncompact, therefore the integral may diverge. Thus
compactification is necessary to make the integral well-defined. (See
e.g., \cite{MR1079726}.)

Our point of view here is different. We consider Uhlenbeck partial
compactification of instanton moduli spaces on $\RR^4$ as objects in
\emph{geometric representation theory}. We study their intersection
cohomology groups and perverse sheaves in view of representation
theory of the affine Lie algebra of $\g$ or the closely related $\scr
W$-algebra.\footnote{If $G$ is not of type $ADE$, we need to replace
  the affine Lie algebra of $\g$ by its Langlands dual
  $\g_{\text{aff}}^\vee$. It is a twisted affine Lie algebra, and
  should not be confused with the untwisted affine Lie algebra of the
  Langlands dual of $\g$.} We will be concerned only with a very
special $4$-manifold, i.e., $\RR^4$ (or $\CC^2$ as we will use an
algebro-geometric framework). On the other hand, we will study
instantons for any group $G$, while $G_c=\SU(2)$ is usually enough for
topological applications.

We will drop `Uhlenbeck partial compactification' hereafter unless it
is really necessary, and simply say instanton moduli spaces or moduli
spaces of framed instantons, as we will keep `U' in the notation.

We will study \emph{equivariant} intersection cohomology groups of
instanton moduli spaces
\begin{equation*}
    IH^*_{T\times\CC^\times\times\CC^\times}(\Uh{d}),
\end{equation*}
where $T$ is a maximal torus of $G$ acting by the change of the
framing, and $\CC^\times\times\CC^\times$ is a maximal torus of
$\GL(2)$ acting on $\RR^4=\CC^2$. The $T$-action has been studied in
the original context: it is important to understand singularities of
instanton moduli spaces around \emph{reducible} instantons. The
$\CC^\times\times\CC^\times$-action is specific for $\RR^4$,
nevertheless it makes the tangent bundle of $\RR^4$ \emph{nontrivial},
and yields a meaningful counter part of Donaldson invariants, as
Nekrasov partition functions. (See below.)

More specifically, we will explain the author's joint work with
Braverman and Finkelberg \cite{2014arXiv1406.2381B} with an emphasis
on its geometric part in these lectures. The stable envelope introduced
by Maulik-Okounkov \cite{MO} and its reformulation in \cite{tensor2}
via Braden's hyperbolic restriction functors are key technical
tools. They also appear in other situations in geometric
representation theory. Therefore those will be explained in a general
framework. In a sense, a purpose of lectures is to explain these
important techniques and their applications.

It is not our intention to reproduce the proof of
\cite{2014arXiv1406.2381B} in full here. As lectures move on, we will
often leave details of arguments to \cite{2014arXiv1406.2381B}. The
author hopes that a reader is comfortable to read
\cite{2014arXiv1406.2381B} at that stage after learning preliminary
materials assumed in \cite{2014arXiv1406.2381B}.

\subsection*{Prerequisite}

\begin{itemize}
      \item The theory of perverse sheaves is introduced in
    de~Cataldo's lectures in the same volume. We will also use
    materials in \cite[\S8.6]{CG}, in particular the isomorphism
    between convolution algebras and Ext algebras.

      \item We assume readers are familiar with \cite{Lecture}, at
    least for Chapters 2, 3, 6. (Chapter 6 presents Hilbert-Chow
    morphisms as examples of semi-small morphisms. They also appear in
    de~Cataldo's lectures.) Results explained there will be briefly
    recalled, but proofs are omitted.

      \item We will use equivariant cohomology and Borel-Moore
    homology groups. A brief introduction can be found in
    \cite{more}. We also use derived categories of equivariant
    sheaves. See \cite{BL} (and/or \cite[\S1]{Lu-cus2} for summary).

      \item We will not review the theory of $\scr W$-algebras, such
    as \cite[Ch.~15]{F-BZ}. It is not strictly necessary, but better
    to have some basic knowledge in order to appreciate the final
    result.
\end{itemize}

\subsection*{History}

Let us explain history of study of instanton moduli spaces in
geometric representation theory.

Historically the relation between instanton moduli spaces and
representation theory of affine Lie algebras was first found by the
author in the context of quiver varieties \cite{Na-quiver}. The
relation is different from one we shall study in this paper. Quiver
varieties are (partial compactifications of) instanton moduli spaces
on $\RR^4/\Gamma$ with gauge group $G = \GL(r)$, where $\Gamma$ is a
finite subgroup of $\SL(2)$. The corresponding affine Lie algebra is
one attached to $\Gamma$ via the McKay correspondence, not the affine
Lie algebra of $\g$ as in this paper.
Moreover the argument in \cite{Na-quiver} works only for $\Gamma\neq
\{1\}$.
The case $\Gamma=\{1\}$ corresponds to the Heisenberg algebra, that is
the affine Lie algebra for $\gl(1)$. The result for $\Gamma=\{1\}$ was
obtained later by Grojnowski and the author independently
\cite{MR1386846,MR1441880} for $r=1$, Baranovsky \cite{Baranovsky} for
general $r$.
This result will be recalled in \secref{sec:Heis}, basically for the
purpose to explain why they were {\it not enough\/} to draw a full
picture.

In the context of quiver varieties, a $\CC^\times$-action naturally
appears from an action on $\RR^4/\Gamma$. (If $\Gamma$ is of type $A$,
we have $\CC^\times\times\CC^\times$-action as in these lectures.) 
The equivariant $K$-theory of quiver varieties are related to
representation theory of quantum toroidal algebras, where $\CC^\times$
appears as a quantum parameter $q$. More precisely the representation
ring of $\CC^\times$ is the Laurent polynomial ring $\ZZ[q,q^{-1}]$.
See \cite{Na-qaff}. The corresponding result for equivariant
homology/affine Yangian version, which is closer to results explained
in this lecture series was obtained by Varagnolo \cite{Varagnolo}. But
these works covered only the case $\Gamma\neq\{ 1\}$.
It is basically because the construction relies on a particular
presentation of quantum toroidal algebras and affine Yangian, which
was available only for $\Gamma\neq\{1\}$ when the paper \cite{Na-qaff}
was written.

In physics side, Nekrasov \cite{Nekrasov} introduced `partition
functions' which are roughly considered as generating functions of
equivariant Donaldson invariants on $\RR^4$ with respect to the
$\CC^\times\times\CC^\times$-action. The ordinary Donaldson invariants
do not make sense for $\RR^4$ (or $S^4$) as there is no interesting
topology on $\RR^4$. But equivariant Donaldson invariants are
nontrivial, and contain interesting information. Nekrasov partition
functions have applications to ordinary Donaldson invariants, see
e.g., \cite{GNY1,GNY3}. Roughly equivariant variables for
$\CC^\times\times\CC^\times$ play the role of Chern roots of the
tangent bundle of a complex surface. Important parts of Donaldson
invariants (namely wall-crossing terms and coefficients relating
Donaldson and Seiberg-Witten invariants) are universal in Chern
classes, hence it is enough to compute for $\RR^4$.

Another conjectural connection between affine Lie algebras and
cohomology groups of instanton moduli spaces on $\RR^4/\ZZ_\ell$,
called the geometric Satake correspondence for affine Kac-Moody
groups, was found in \cite{braverman-2007}.\footnote{The first version
  of the preprint was posted to arXiv in Nov.\ 2007. It was two years
  before \cite{AGT} was posted.} Recall the affine Grassmannian $\Gr_G
= G((z))/G[[z]]$ for a finite dimensional complex reductive group has
a $G[[z]]$-action whose orbits are parametrized by dominant coweights
$\lambda$. The geometric Satake correspondence roughly says $\IC$ of
the $G[[z]]$-orbit through $\lambda$ \emph{knows} the irreducible
representation $V(\lambda)$ with highest weight $\lambda$ of the
Langlands dual group $G^\vee$.
The affine Grassmannian of an affine Kac-Moody group is difficult to
make sense as a geometric object, but instanton moduli spaces on
$\RR^4/\ZZ_\ell$ play the role of a slice to an orbit in the closure
of another larger orbit.
In this connection, the $\CC^\times\times\CC^\times$-action is
discarded, but the intersection cohomology is considered. The main
conjecture says the graded dimension of the intersection cohomology
groups can be computed in terms of the $q$-analog of weight
multiplicities of integrable representations of the affine Lie algebra
of $\g$ with level $\ell$. (See \cite{MR2855083} also for an important
correction of the conjectural definition of the filtration.) We will
visit a simpler variant of the conjecture in
\subsecref{subsec:affine-grassm-an-affine}.

When an affine Kac-Moody group is of affine type $A$, the instanton
moduli space is a quiver variety, and has the Gieseker partial
compactification, as a symplectic resolution of singularities. Then
one can work on ordinary homology/K-theory of the resolution. There is
a close relation between ordinary homology of the symplectic
resolution and intersection cohomology. (See \eqref{eq:11}.)
In particular, the above conjectural dimension formula was checked for
type $A$ (after discarding the grading) in \cite{Na-branching}. A key is
that the quiver variety is already linked with an affine Lie algebra,
as we have recalled above. More precisely, for $G = \SL(r)$, the
quiver variety construction relates the $IH$ of an instanton moduli
space to $(\algsl_{\ell})_{\text{aff}}$ of level $r$, while the
conjectural geometric Satake correspondence relates it to
$(\algsl_{r})_{\text{aff}}$ of level $\ell$. They are related by
I.Frenkel's level-rank duality, and the weight multiplicities are
replaced by tensor product multiplicities.

We still lack a precise (even conjectural) understanding of $IH$ of
instanton moduli spaces for general $G$ and general $\RR^4/\Gamma$,
not necessarily of type $A$, say both $G$ and $\Gamma$ are of type
$E_8$. It is a good direction to pursue in future.

Despite its relevance for the study of Nekrasov partition function,
the equivariant $K$-theory and homology group for the case $\Gamma =
\{1\}$ were not understood for several years. They were more difficult
than the case $\Gamma\neq\{1\}$, because analogs of quantum
toroidal algebras and affine Yangian for $\Gamma=\{1\}$ were not known
as mentioned above. In the context of the geometric Satake
correspondence, $\Gamma = \{1\}$ corresponds to level $1$
representations, which have explicit constructions from the Fock space
(Frenkel Kac construction), and the conjectural dimension formula was
checked \cite{braverman-2007,MR2855083}.  But the role of
$\CC^\times\times\CC^\times$-action was not clear.

In 2009, Alday-Gaiotto-Tachikawa \cite{AGT} has connected Nekrasov
partition for $G=\SL(2)$ with the representation theory of the
Virasoro algebra via a hypothetical $6$-dimensional quantum field
theory. This AGT correspondence is hard to justify in a mathematically
rigorous way, but yet gives a very good view point. In particular, it
predicts that the equivariant intersection cohomology of instanton
moduli space is a representation of the Virasoro algebra for
$G=\SL(2)$, and of the $\scr W$-algebra associated with $\g$ in
general. See \subsecref{subsec:AGT} for a short summary of the AGT
correspondence.

On the other hand, in mathematics side, equivariant $K$ and homology
groups for the case $\Gamma=\{1\}$ had been understood gradually
around the same time.
Before AGT,\footnote{Preprints of those papers were posted to arXiv,
  slightly before \cite{AGT} was appeared on arXiv.}
Feigin-Tsymbaliuk and Schiffmann-Vasserot \cite{MR2854154,MR3018956}
studied the equivariant $K$-theory for the $G=\GL(1)$-case. Then
research was continued under the influence of the AGT correspondence,
and the equivariant homology for the $G=\GL(r)$-case was studied by
Schiffmann-Vasserot, Maulik-Okounkov \cite{MR3150250,MO}.

In particular, the approach taken in \cite{MO} is considerably
different from previous ones. It does not use a particular
presentation of an algebra. Rather it constructs the algebra action
from the $R$-matrix, which naturally arises on equivariant homology
group. This sounds close to a familiar $RTT$ construction of Yangians
and quantum groups, but it is more general: the $R$-matrix is
constructed in a purely geometric way, and has infinite size in
general. Also for $\Gamma\neq\{1\}$, it defines a coproduct on the
affine Yangian. It was explicitly constructed for the usual Yangian
for a finite dimensional complex simple Lie algebra long time ago by
Drinfeld \cite{Drinfeld}, but the case of the affine Yangian was new.
\footnote{It motivated the author to define the coproduct in terms of
  standard generators in his joint work in progress with Guay.}

In \cite{tensor2}, the author reformulated the stable envelope, a
geometric device to produce the $R$-matrix in \cite{MO}, in a sheaf
theoretic language, in particular using Braden's hyperbolic
restriction functors \cite{Braden}. This reformulation is necessary in
order to generalize the construction of \cite{MO} from $G=\GL(r)$ to
other $G$. It is because the original formulation of the stable
envelope required a symplectic resolution. We have a symplectic
resolution of $\Uh{d}$ for $G=\GL(r)$, as a quiver variety, but not
for general $G$.

Then the author together with Braverman, Finkelberg
\cite{2014arXiv1406.2381B} studies the equivariant intersection
cohomology of the instanton moduli space, and constructs the $\scr
W$-algebra representation on it. Here the geometric Satake
correspondence for the affine Lie algebra of $\g$ gives a
philosophical background: the reformulated stable envelope is used to
realize the restriction to the affine Lie algebra of a Levi subalgebra
of $\g$. It nicely fits with Feigin-Frenkel description of the $\scr
W$-algebra \cite[Ch.~15]{F-BZ}.

\subsection*{Convention}\label{convention}

\begin{enumerate}
      \item A partition $\lambda$ is a nonincreasing sequence
    $\lambda_1\ge\lambda_2\ge \cdots$ of nonnegative integers with
    $\lambda_N = 0$ for sufficiently large $N$. We set $|\lambda| =
    \sum \lambda_i$, $l(\lambda) = \# \{ i \mid \lambda_i \neq
    0\}$. We also write $\lambda = (1^{\alpha_1} 2^{\alpha_2}\cdots)$
    with $\alpha_k = \# \{ i\mid \lambda_i = k\}$.

      \item For a variety $X$, let $D^b(X)$ denote the
    bounded derived category of complexes of constructible
    $\CC$-sheaves on $X$.
    Let $\IC(X,\mathcal L)$ denote the intersection cohomology complex
    associated with a local system $\mathcal L$ over a Zariski open dense
    subvariety $X_0$ in the smooth locus of $X$. We denote it also by
    $\IC(X)$ if $\mathcal L$ is trivial.
  When $X$ is smooth and irreducible, $\cC_X$ denotes the constant
  sheaf on $X$ shifted by $\dim X$. If $X$ is a disjoint union of
  irreducible smooth varieties $X_\alpha$, we understand $\cC_X$ as
  the direct sum of $\cC_{X_\alpha}$.

    \item We make a preferred degree shift for cohomology and
  Borel-Moore homology groups, and denote them by $H^{[*]}(X)$ and
  $H_{[*]}(X)$, where $H^{[*]}(X) = H^{*+\dim X}(X)$, $H_{[*]}(X) =
  H_{*+\dim X}(X)$ for a smooth variety $X$. The same convention is
  applied for cohomology groups with compact support. More generally,
  if $L$ is a closed subvariety in a smooth variety $X$, we consider
  $H_{[*]}(L) = H_{*+\dim X}(L)$.
\end{enumerate}


\subsection*{Coulomb branches}

About a few month before the author delivered lectures, he found a
mathematical approach to the so-called \emph{Coulomb branches} of
$3$-dimensional SUSY gauge theories \cite{2015arXiv150303676N}. Such a
gauge theory is associated with a pair $(H_c,\mathbf M)$ of a compact
Lie group $H_c$ and its quaternionic representation $\mathbf M$.
Coulomb branches are hyper-K\"ahler manifolds with $\SU(2)$-action
possibly with singularities. They have been studied in physics for
many years. But physical definition contains \emph{quantum
  corrections}, which were difficult to justify in a mathematically
rigorous way.
On the other hand, the so-called \emph{Higgs branches} of gauge
theories are hyper-K\"ahler quotients of $\mathbf M$ by $H_c$. This is
a mathematically rigorous definition, and quiver varieties mentioned
above are examples of Higgs branches of particular $(H_c,\mathbf M)$,
called (framed) quiver gauge theories.

At the time these notes are written, the author together with
Braverman and Finkelberg have established a rigorous definition of
Coulomb branches as affine algebraic varieties with holomorphic
symplectic structures under the condition
$\mathbf M = \mathbf N\oplus\mathbf N^*$ for some $\mathbf N$
\cite{main}. The spaces $\Uh{d}$ discussed in these notes, and even
more generally moduli spaces of framed $G_c$-instantons on
$\RR^4/\ZZ_\ell$ conjecturally appear as Coulomb branches of
$(H_c,\mathbf M)$.
The gauge theories are framed quiver gauge theories of affine type
$ADE$, whose Higgs branches are quiver varieties of affine types,
i.e., moduli spaces of $\GL(r)$-instantons on $\RR^4/\Gamma$. A proof
of this conjecture for type $A$ is given in \cite{bowCoulomb}.

Thus Coulomb branches open a new way to approach to instanton moduli
spaces. For examples, their quantization, i.e., noncommutative
deformation of the coordinate rings naturally arise from the
construction. We also obtain variants of $\Uh{d}$, arising from the
same type of framed quiver gauge theory, for each weight
$\mu\le\Lambda_0$ where $\Lambda_0$ is the $0$th fundamental weight of
the affine Lie algebra for $G$. Here $\Uh{d}$ corresponds to $\mu =
\Lambda_0 - d\delta$, where $\delta$ is the primitive positive
imaginary root. In fact, the Coulomb branch of this category is
conjecturally an instanton moduli space of Taub-NUT space. The
Taub-NUT space is a hyper-K\"ahler manifold which is isomorphic to
$\CC^2$ as a holomorphic symplectic manifold. The Riemannian metric is
different from the Euclidean metric: The size of fibers of the Hopf
fibration at infinity remains bounded. There could be nontrivial
monodromy on fibers at infinity, which is the extra data cannot be
seen in $\Uh{d}$.

Moreover Higgs and Coulomb branches are tightly connected in the
author's approach: The construction of Coulomb branches uses moduli
spaces of twisted maps from $\CP^1$ to Higgs branches. These links
should fit with \emph{symplectic duality} proposed in
\cite{2014arXiv1407.0964B}. The intersection cohomology groups of
$\Uh{d}$ studied in this paper should be looked at from this view
point. See \remref{rem:duality} for an example.

Thus it is clear that the whole stories talked here must be redone for
instanton moduli spaces on Taub-NUT space. The author would
optimistically hope that they are similar, and our lectures still
serve as basics.

\subsection*{Acknowledgments}

The notes are based on the lectures I gave at the Park City
Mathematical Institute in Summer of 2015. I thank the PCMI for its
warm hospitality.

I am grateful to A.~Braverman and M.~Finkelberg for the collaboration
\cite{2014arXiv1406.2381B}, on which the lecture series is based.
I still vividly remember when Sasha said that symplectic resolution
exists only in rare cases, hence we need to study singular spaces. It
was one of motivations for me to start instanton moduli spaces for
general groups, eventually it led me to the collaboration.

Prior to the PCMI, lectures were delivered at University of Tokyo,
September 2014. I thank A.~Tsuchiya for invitation.

I would also thank the referee for her/his careful reading and various
suggestions.

This research is supported by JSPS Kakenhi Grant Numbers 
23224002, 
23340005, 
24224001. 

\section{Uhlenbeck partial compactification -- in brief}

We do not give a precise definition of Uhlenbeck partial
compactification of an instanton moduli space on $\RR^4$, which was
introduced in \cite{BFG}. For type $A$, there is an earlier
alternative construction as an example of a quiver variety, as
reviewed in \subsecref{subsec:Gieseker}.

For our purpose, we will only use properties explained in this
section. These are easy to check in the quiver variety construction,
hence the reader should accept them at the first reading.

\subsection{Moduli space of bundles}

First of all, $\Uh{d}$ is a partial compactification of $\Bun{d}$, the
moduli space of framed holomorphic (or equivalently algebraic)
$G$-bundles $\mathcal F$ over $\proj^2$ of instanton number $d$,
where the framing is the trivialization $\varphi$ of the restriction
of $\mathcal F$ at the line $\linf$ at infinity. It will be denoted by
$\Uh{d}$ throughout these lectures. It is an affine algebraic variety.

Here the instanton number is the integration over $\proj^2$ of the
characteristic class of $G$-bundles corresponding to the invariant
inner product on $\g$, normalized as $(\theta,\theta)=2$ for the
highest root $\theta$.\footnote{\label{fnt:instanton_number}When an
  embedding $\SL(2)\to G$ corresponding to a coroot $\alpha^\vee$ is
  gives, we can induction a $G$-bundle $\cF$ from a $\SL(2)$-bundle
  $\cF_{\SL(2)}$ Then we have $d(\cF) = d(\cF_{\SL(2)})\times
  (\alpha^\vee,\alpha^\vee)/2$. Instanton numbers are preserved if $G$
  is type $ADE$, but not in general.}

\begin{Remark}
By an analytic result due to Bando \cite{Bando}, we can replace
instanton moduli spaces on $\RR^4$ by $\Bun{d}$. We use
algebro-geometric approaches to instanton moduli spaces hereafter.
\end{Remark}

We also use that $\Bun{d}$ is a smooth locus of $\Uh{d}$, and its
tangent space at $(\cF,\varphi)$ is
\begin{equation*}
    H^1(\proj^2, \g_\cF(-\linf)),
\end{equation*}
where $\g_\cF$ is the associated vector bundle $\cF\times_G \g$.

In the quiver variety construction, $\Bun{d}$ is defined as the space
of linear maps satisfying a quadratic equation modulo the action of
$\GL(d)$. The tangent space is the quotient of the derivative the
defining equation modulo the differential of the action. Properties of
tangent spaces which we will use can be equally checked by using such
a description.

\subsection{Stratification}

We will use its stratification:
\begin{equation}\label{eq:6}
   \Uh{d} = \bigsqcup_{0\le d'\le d} \Bun{d'} \times S^{d-d'}(\CC^2),
\end{equation}
where $\Bun{d'}$ is the moduli space 
with smaller instanton number $d'$.

Let us refine the stratification in the symmetric product $S^{d-d'}(\CC^2)$ part:
\begin{equation}\label{eq:7}
   \Uh{d} =  \bigsqcup_{d=|\lambda|+d'} \Bun[G]{d'} \times S_\lambda(\CC^2),
\end{equation}
where $S_\lambda(\CC^2)$ consists of configurations of points in $\CC^2$ whose multiplicities are given by the partition $\lambda$.

\subsection{Factorization morphism}

We will also use the factorization morphism
\begin{equation*}
    \pi^d\colon \Uh{d}\to S^d\CC^1.
\end{equation*}
The definition is given in \cite[\S6.4]{BFG}. It depends on a choice
of the projection $\CC^2\to\CC^1$. We do not recall the definition,
but its crucial properties are
\begin{enumerate}
      \item On the factor $S^{d-d'}(\CC^2)$, it is given by the
    projection $\CC^2\to\CC^1$.
      \item Consider $C_1 + C_2\in S^d\CC^1$ such that $C_1\in
    S^{d_1}\CC^1$, $C_2\in S^{d_2}\CC^1$ are disjoint. Then
    $(\pi^d)^{-1}(C_1+C_2)$ is isomorphic to
    $(\pi^{d_1})^{-1}(C_1)\times (\pi^{d_2})^{-1}(C_2)$.
\end{enumerate}

Intuitively $\pi^d$ is given as follows. By (1), it is enough to
consider the case of {\it genuine\/} framed $G$-bundles
$(\cF,\varphi)$. Let $x\in \CC^1$ and consider the line $\proj^1_x =
\{z_1 = x z_0\}$ in $\proj^2$. If $x=\infty$, we can regard
$\proj^1_x$ as the line $\linf$ at infinity. If we restrict the
$G$-bundle $\cF$ to $\linf$, it is trivial. Since the triviality is an
open condition, the restriction $\cF|_{\proj^1_x}$ is also trivial
except for finitely many $x\in\CC^1$, say $x_1$, $x_2\dots, x_k$. Then
$\pi^d(\cF,\varphi)$ is the sum $x_1+x_2+\dots+x_k$ if we assign the
multiplicity appropriately.

\subsection{Symplectic resolution}

As we will see in the next section, $\Uh{d}$ for $G=\SL(r)$ has a
symplectic resolution. On the other hand, $\Uh{d}$ for $G\neq\SL(r)$
\emph{does not} have a symplectic resolution. This can be checked as
follows. First consider the case $d=1$. It is well known that $\Uh{1}$
is the product of $\CC^2$ and the closure of the minimal nilpotent
orbit in $\g$.\footnote{The author does not know who first noticed
  this fact, and even where a proof is written. In \cite{AHS} it was
  shown that any $G_c$ $1$-instanton is reduced to an $\SU(2)$
  1-instanton, and the instanton moduli space (but not framed one) for
  $\SU(2)$ is determined. But this statement itself is not stated.} Fu
proved that the minimal nilpotent orbit has no symplectic resolution
except $G = \SL(r)$ \cite{MR1943745}. Next consider $d > 1$. Take the
second largest stratum $\Bun{d-1}\times\CC^2$. By the factorization,
$\Uh{d}$ is isomorphic to a product of a smooth variety and $\Uh{1}$
in a neighborhood of a point in the stratum. Fu's argument is local
(the minimal nilpotent orbit has an isolated singular point), hence
$\Uh{d}$ does not have a symplectic resolution except $G=\SL(r)$.

\subsection{Group action}\label{subsec:group-action}

We have an action of a group $G$ on $\Uh{d}$ by the change of
framing. We also have an action of $\GL(2)$ on $\Uh{d}$ induced from
the $\GL(2)$ action on $\CC^2$. Let $T$ be a maximal torus of $G$.
In these notes, we only consider the action of the subgroup $T\times
\CC^\times\times\CC^\times$ in $G\times \GL(2)$. Let us introduce the
following notation: $\TT = T\times\CC^\times\times\CC^\times$.

Our main player will be the equivariant intersection cohomology group
$IH^*_\TT(\Uh{d})$. It is a module over
\begin{equation}\label{eq:30}
    H^*_\TT(\mathrm{pt}) \cong \CC[\operatorname{Lie}\TT]
    \cong \CC[\ve_1,\ve_2,\vec{a}],
\end{equation}
where $\ve_1$, $\ve_2$ (resp.\ $\vec{a}$) are coordinates (called
equivariant variables) on $\operatorname{Lie}(\CC^\times\times
\CC^\times)$ (resp.\ $\operatorname{Lie} T$).

We also use the notation
\begin{equation*}
    \bA_T = \CC[\operatorname{Lie}\TT] = \CC[\vec{a},\ve_1,\ve_2],\qquad
    \bA = \CC[\ve_1,\ve_2].
\end{equation*}
Their quotient fields are denoted by
\begin{equation*}
    \bF_T = \CC(\operatorname{Lie}\TT), \qquad
    \bF = \CC(\ve_1,\ve_2).
\end{equation*}

\section{Heisenberg algebra action on Gieseker partial compactification}\label{sec:Heis}

This lecture is an introduction to the actual content of subsequent
lectures. We consider instanton moduli spaces when the gauge group $G$
is $\SL(r)$. We will explain results about (intersection) cohomology
groups of instanton moduli spaces known before the AGT correspondence
was found. Then it will be clear what was lacking, and readers are
motivated to learn more recent works.

\subsection{Gieseker partial compactification}\label{subsec:Gieseker}

When the gauge group $G$ is $\SL(r)$, we denote the corresponding
Uhlenbeck partial compactification $\Uh{d}$ by $\Uh[r]{d}$.

For $\SL(r)$, we can consider a modification $\Gi{d}$ of $\Uh[r]{d}$,
called the {\it Gieseker partial compactification}. It is a moduli
space of framed torsion free sheaves $(E,\varphi)$ on $\proj^2$, where
the framing $\varphi$ is a trivialization of the restriction of $E$ to
the line at infinity $\linf$. It is known that $\Gi{d}$ is a smooth
(holomorphic) symplectic manifold. It is also known that there is a
projective morphism $\pi\colon\Gi{d}\to\Uh[r]{d}$, which is a
resolution of singularities.

When $r=1$, the group $\SL(1)$ is trivial. But the Giesker space is
nontrivial: $\Gi[1]{d}$ is the Hilbert scheme $(\CC^2)^{[d]}$ of $d$
points on the plane $\CC^2$, and the Uhlenbeck partial
compactification\footnote{Since $\Bun[\GL(1)]{d} = \emptyset$ unless
  $d=0$, this is a confusing name.} $\Uh[1]{d}$ is the
$d^{\mathrm{th}}$ symmetric product $S^d(\CC^2)$ of $\CC^2$. The
former parametrizes ideals $I$ in the polynomial ring $\CC[x,y]$ of
two variables with colength $d$, i.e., $\dim \CC[x,y]/I = d$. The
latter is the quotient of the Cartesian product $(\CC^2)^d$ by the
symmetric group $S_d$ of $d$ letters. It parametrized $d$ unordered
points in $\CC^2$, possibly with multiplicities. We will use the
summation notation like $p_1 + p_2 + \dots + p_d$ or $d\cdot p$ to
express a point in $S^d(\CC^2)$.

\begin{figure}[htbp]
    \centering
\begin{tikzpicture}[
    node distance=40pt,auto]
\node (V) {$\CC^d$};
\node (W) [below of=V] {$\CC^r$};
\path[->] (V) edge [thick,in=345,out=15,loop] node {$B_2$} ()
          (V) edge [thick,in=195,out=165,loop]  node [left] {$B_1$} ();
\path[->] (W) edge [thick,bend left=20] node {$I$} (V)
          (V) edge [thick,bend left=20] node {$J$} (W);
\end{tikzpicture}
    \caption{Quiver varieties of Jordan type}
    \label{fig:Jordan}
\end{figure}

For general $r$, these spaces can be understood as quiver varieties
associated with Jordan quiver. It is not my intention to explain the
theory of quiver varieties, but here is the definition in this case:
Take two complex vector spaces of dimension $d$, $r$ respectively. Consider linear maps $B_1$, $B_2$, $I$, $J$ as in Figure~\ref{fig:Jordan}. We impose the equation
\begin{equation*}
   \mu(B_1,B_2,I,J) \defeq [B_1,B_2]+IJ = 0
\end{equation*}
Then we take two types of quotient of $\mu^{-1}(0)$ by $\GL(d)$. The
first one corresponds to $\Uh[r]{d}$, and is the affine
algebro-geometric quotient $\mu^{-1}(0)\dslash\!\GL(d)$. It is defined
as the spectrum of $\CC[\mu^{-1}(0)]^{\GL(d)}$, the ring of
$\GL(d)$-invariant polynomials on $\mu^{-1}(0)$. Set-theoretically it
is the space of closed $\GL(d)$-orbits in $\mu^{-1}(0)$.
The second quotient corresponds to $\Gi{d}$, and is the geometric invariant theory quotient with respect to the polarization given by the determinant of $\GL(d)$. It is $\Proj$ of $\bigoplus_{n\ge 0}
\CC[\mu^{-1}(0)]^{\GL(d),\det^n}$, the ring of $\GL(d)$-semi-invariant polynomials. Set-theoretically it is the quotient of {\it stable\/} points in $\mu^{-1}(0)$ by $\GL(d)$. Here $(B_1,B_2,I,J)$ is stable if there is no proper subspace $T$ of $\CC^d$ which is invariant under $B_1$, $B_2$ and is containing the image of $I$.

\begin{Theorem}
    We have isomorphisms
    \begin{equation*}
        \Uh[r]{d} \cong \mu^{-1}(0)\dslash\!\GL(d), \qquad
        \Gi[r]{d} \cong \Proj\left(\bigoplus_{n\ge 0}
        \CC[\mu^{-1}(0)]^{\GL(d),\det^n}\right).
    \end{equation*}
\end{Theorem}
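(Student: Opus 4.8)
The plan is to go through the ADHM / monad correspondence on $\proj^2$: first realize framed torsion‑free sheaves as stable quadruples $(B_1,B_2,I,J)$ solving $\mu=0$ modulo $\GL(d)$, deduce the second isomorphism from this together with geometric invariant theory, and then obtain the first isomorphism by passing to the affinization and matching the outcome with the defining properties of the Uhlenbeck space recalled in \S2. Concretely, starting from a framed torsion‑free sheaf $(E,\varphi)$ on $\proj^2$ of rank $r$ with second Chern number $d$, the framing together with torsion‑freeness gives the vanishings $H^0(\proj^2,E(-\linf))=H^2(\proj^2,E(-\linf))=0$, so that $V:=H^1(\proj^2,E(-\linf))$ has dimension $d$; the Beilinson spectral sequence then collapses to a monad
\[
  V\otimes\shfO(-1)\ \xrightarrow{\ a\ }\ (V\oplus V\oplus W)\otimes\shfO\ \xrightarrow{\ b\ }\ V\otimes\shfO(1),\qquad E\cong\ker b/\operatorname{im}a,\ \ W=\CC^r,
\]
and reading the entries of $a$ and $b$ in homogeneous coordinates produces $B_1,B_2\in\End V$, $I\in\Hom(W,V)$, $J\in\Hom(V,W)$ with $ba=0$, which is exactly $\mu(B_1,B_2,I,J)=0$. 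Conversely a point of $\mu^{-1}(0)$ gives such a complex, whose cohomology is a framed torsion‑free sheaf precisely when the stability condition stated before the theorem holds; the translation of ``no proper $B_1,B_2$‑invariant $T$ containing $\operatorname{im}I$'' into the monad conditions on $a,b$ is the one‑parameter‑subgroup computation carried out in \cite[Ch.~2]{Lecture}. First I would then check that the whole construction is functorial — running it over an arbitrary base via cohomology and base change — so that it identifies the moduli functor of framed torsion‑free sheaves with the quotient functor of the stable locus of $\mu^{-1}(0)$ by $\GL(V)$.

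Next, since $\mu$ is the moment map for the $\GL(V)$‑action and $\GL(V)$ acts freely on the stable locus, this quotient is a smooth holomorphic‑symplectic variety, hence is $\Gi[r]{d}$; it remains only to see that the stable locus is the (semi)stable locus for the $\det$‑polarization. That is a Hilbert--Mumford computation: a one‑parameter subgroup of $\GL(V)$ destabilizes $(B_1,B_2,I,J)$ relative to $\det$ iff it preserves a proper subspace containing $\operatorname{im}I$, so semistable $=$ stable $=$ the sheaf‑theoretic stable locus, and therefore $\Gi[r]{d}\cong\Proj\bigl(\bigoplus_{n\ge0}\CC[\mu^{-1}(0)]^{\GL(d),\det^n}\bigr)$.

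For the first isomorphism, note that the degree‑zero summand of the semi‑invariant ring is $\CC[\mu^{-1}(0)]^{\GL(d)}$, giving a projective morphism $\Gi[r]{d}\to X:=\Spec\CC[\mu^{-1}(0)]^{\GL(d)}=\mu^{-1}(0)\dslash\GL(d)$, whose closed points are the closed $\GL(d)$‑orbits in $\mu^{-1}(0)$. By the Hilbert--Mumford/Kempf analysis such an orbit is polystable: $V=V'\oplus V''$ with $(B_1,B_2,I,J)|_{V'}$ stable of some size $d'$ and $I|_{V''}=J|_{V''}=0$, $[B_1,B_2]|_{V''}=0$, the commuting pair $(B_1|_{V''},B_2|_{V''})$ semisimple — i.e. $V''$ is a semisimple $\CC[x,y]$‑module, equivalently a point of $S^{d-d'}(\CC^2)$. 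Thus the closed orbits are parametrized by $\bigsqcup_{d'+n=d}\Bun[r]{d'}\times S^{n}(\CC^2)$, which is precisely the stratification \eqref{eq:6} of $\Uh[r]{d}$; moreover $(B_1,B_2,I,J)\mapsto(\text{support of }V'')$ on the symmetric‑product factor is the factorization morphism $\pi^d$ and manifestly satisfies the two crucial properties listed in \S2. Since $X$ is normal, contains $\Bun[r]{d}$ as a dense smooth open subset, and carries this factorization morphism, the characterization of the Uhlenbeck space in \cite{BFG} identifies $X$ with $\Uh[r]{d}$, giving the first isomorphism.

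The hard part will not be any of the pointwise dictionaries, which are classical, but making the monad construction of the first paragraph genuinely work in families — flatness of $R^1(-)(-\linf)$, cohomology and base change, and representability — so that one gets an isomorphism of \emph{schemes} rather than a bijection on closed points, together with the Hilbert--Mumford computations reconciling the three notions of stability. On the Uhlenbeck side the delicate point is that $X$ is produced here by invariant theory whereas $\Uh[r]{d}$ is defined in \cite{BFG} by a different route, so one must either quote the agreement of the two constructions or verify that normality together with the dense bundle locus and the factorization morphism with its two properties pin the variety down uniquely; this is where I would lean most heavily on \cite{BFG}, and for $r=1$ on \cite[Ch.~2--3]{Lecture}.
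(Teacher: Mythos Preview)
Your proposal is correct and follows the same ADHM/monad route that the paper indicates: the paper does not give a self-contained proof but sketches the passage from $(B_1,B_2,I,J)$ to the monad and its cohomology sheaf, referring the reader to \cite[Ch.~2]{Lecture} for the details you have spelled out (Beilinson resolution, Hilbert--Mumford identification of stability, closed-orbit/polystable decomposition giving the stratification \eqref{eq:6}).

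The one place where you go beyond the paper's treatment is the reconciliation of $\mu^{-1}(0)\dslash\GL(d)$ with the Uhlenbeck space of \cite{BFG}. The paper's stance (see the opening of \S2) is that for type $A$ the quiver construction is simply taken as the working definition of $\Uh[r]{d}$, with the listed properties (stratification, factorization morphism) then being checked directly from the linear-algebra description; your plan to instead invoke the characterization in \cite{BFG} is a legitimate alternative, but not what the lectures are doing.
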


Let us briefly recall how those linear maps determine points in $\Gi{d}$ and $\Uh[r]{d}$. The detail was given in \cite[Ch.~2]{Lecture}. Given $(B_1,B_2,I,J)$, we consider the following complex
\begin{equation*}
    \shfO^{\oplus d}(-1) \xrightarrow{a}
    \begin{matrix}
        \shfO^{\oplus d} \\ \oplus \\ \shfO^{\oplus d} \\ \oplus
        \\ \shfO^{\oplus r}
    \end{matrix}
    \xrightarrow{b} \shfO^{\oplus d}(1),
\end{equation*}
where
\begin{equation*}
    a =
    \begin{pmatrix}
        z_0 B_1 - z_1 \\ z_0 B_2 - z_2 \\ z_0 J
    \end{pmatrix},
    \qquad
    b =
    \begin{pmatrix}
        - (z_0 B_2 - z_2) & z_0 B_1 - z_1 & z_0 I
    \end{pmatrix}.
\end{equation*}
Here $[z_0:z_1:z_2]$ is the homogeneous coordinate system of $\proj^2$
such that $\linf = \{ z_0 = 0\}$. The equation $\mu=0$ guarantees that
this is a complex, i.e., $ba = 0$. One sees easily that $a$ is
injective on each fiber over $[z_0:z_1:z_2]$ except for finitely many.
The stability condition implies that $b$ is surjective on each
fiber. It implies that $E \defeq \Ker b/\Ima a$ is a torsion free
sheaf of rank $r$ with $c_2 = d$. Considering the restriction to $z_0
= 0$, one sees that $E$ has a canonical trivialization $\varphi$
there. Thus we obtain a framed sheaf $(E,\varphi)$ on $\proj^2$. One
also see that $a$ is injective on any fiber if and only if $E$ is a
locally free sheaf, i.e., a vector bundle.

From this description, we can check the stratification
\eqref{eq:6}. If $(B_1,B_2,I,J)$ has a closed $\GL(d)$-orbit, it is
{\it semisimple}, i.e., a `submodule' (in appropriate sense) has a
complementary submodule. Thus $(B_1,B_2,I,J)$ decomposes into a direct
sum of {\it simple\/} modules, which do not have nontrivial
submodules. There is exactly one simple summand with nontrivial $I$,
$J$, and all others have $I=J=0$. The former gives a point in
$\Bun{d'}$, as one can check that $a$ is injective in this case. The
latter is a pair of commuting matrices $[B_1, B_2] = 0$, and the
simplicity means that the size of matrices is $1$. Therefore the
simultaneous eigenvalues give a point in $\CC^2$.

\begin{Exercise}
    (a) We define the factorization morphism $\pi^d$ for $G=\SL(r)$ in
    terms of $(B_1,B_2,I,J)$. Let $\pi^d([B_1,B_2,I,J])\in S^d\CC$ be
    the spectrum of $B_1$ counted with multiplicities. Check that
    $\pi^d$ satisfies the properties (1),(2) above.

    (b) 
    Check that $\cF|_{\proj^1_x}$ is trivial if
    $B_1 - x$ is invertible.

    More generally one can define the projection as the spectrum of
    $a_1 B_1 + a_2 B_2$ for $(a_1,a_2)\in\CC^2\setminus\{0\}$, but it
    is enough to check this case after a rotation by the $\GL(2)$-action.
\end{Exercise}

\subsection{Tautological bundle}\label{subsec:taut}

Let $\cV$ be the tautological bundle over $\Gi{d}$. It is a rank $d$
vector bundle whose fiber at a framed torsion free sheaf $(E,\varphi)$
is $H^1(\CP^2,E(-\linf))$. In the quiver variety description, it is
the vector bundle associated with the principal $\GL(d)$-bundle
$\mu^{-1}(0)^{\text{stable}}\to \Gi{d}$. For Hilbert schemes of
points, parametrizing ideals $I$ in $\CC[x,y]$, the fiber at $I$ is
$\CC[x,y]/I$.

\subsection{Gieseker-Uhlenbeck morphism}


Recall that a (surjective) projective morphism $\pi\colon M\to X$ from
a nonsingular variety $M$ is {\it semi-small\/} if $X$ has a
stratification $X = \bigsqcup X_\alpha$ such that
$\pi|_{\pi^{-1}(X_\alpha)}$ is a topological fibration, and $\dim
\pi^{-1}(x_\alpha) \le \frac12 \codim X_\alpha$ for $x_\alpha\in
X_\alpha$.

\begin{Proposition}[\protect{\cite{Baranovsky},
      \cite[Exercise~5.15]{Lecture}}]
\label{prop:semismall}
    $\pi\colon \Gi{d}\to \Uh[r]{d}$ is semi-small with respect to the
    stratification \eqref{eq:7}. Moreover the fiber $\pi^{-1}(x)$ is
    irreducible\footnote{The (solution of) exercise only shows there
      is only one irreducible component of $\pi^{-1}(x_\alpha)$ with
      dimension $\frac12\codim X_\alpha$. The irreducibility was
      proved by Baranovsky and Ellingsrud-Lehn.} at any point
    $x\in\Uh[r]{d}$.
\end{Proposition}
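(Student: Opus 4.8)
The plan is to reduce the semi-smallness estimate to an explicit fiber-dimension computation over each stratum $\Bun{d'}\times S_\lambda(\CC^2)$ appearing in \eqref{eq:7}, using the factorization morphism and the known structure of the fibers of the Hilbert–Chow morphism. First I would observe that by the stratification \eqref{eq:7} and the fact that $\pi$ is compatible with the decomposition into $\Bun{d'}$-part and symmetric-product part, the essential case is a single point $d'\cdot 0 \in S^d(\CC^2)$ (more precisely $(\cF,\varphi)\times (\lambda\text{-configuration})$): the fiber over a point in $\Bun{d'}\times S_\lambda(\CC^2)$ splits, by the factorization property (2) of $\pi^d$ and a local analysis near a reducible sheaf, into a product of a point of $\Bun{d'}$ (the genuine-bundle locus, where $\pi$ is an isomorphism) and fibers of Hilbert–Chow morphisms $(\CC^2)^{[k]}\to S^k(\CC^2)$ governed by the parts of $\lambda$. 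So the computation reduces to the punctual Hilbert scheme.

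Next I would recall the dimension of the punctual Hilbert scheme: the fiber of $(\CC^2)^{[k]}\to S^k(\CC^2)$ over $k\cdot p$ has dimension $k-1$ (Briançon), and over a configuration with multiplicities $\lambda=(1^{\alpha_1}2^{\alpha_2}\cdots)$ the fiber over that point of $S^{d-d'}(\CC^2)$ has dimension $\sum_k \alpha_k(k-1) = (d-d') - \ell(\lambda)$. On the other hand the codimension of the stratum $\Bun{d'}\times S_\lambda(\CC^2)$ in $\Uh[r]{d}$ is computed from $\dim\Uh[r]{d}=2rd$, $\dim\Bun{d'}=2rd'$, and $\dim S_\lambda(\CC^2)=2\ell(\lambda)$, giving $\codim = 2r(d-d') - 2\ell(\lambda)$. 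Since $r\ge 1$ we get $\dim\pi^{-1}(x) = (d-d')-\ell(\lambda) \le r(d-d')-\ell(\lambda) = \tfrac12\codim$, which is the semi-smallness inequality; topological local triviality of $\pi$ over each stratum follows from the equivariance and the product structure. For irreducibility, the exercise referenced in the statement already pins down that there is a unique component of top dimension over each stratum, and I would then invoke the stronger irreducibility of punctual Hilbert schemes of points on a smooth surface (Briançon; Ellingsrud–Lehn) to conclude that \emph{every} fiber $\pi^{-1}(x)$, not just its top-dimensional part, is irreducible.

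The main obstacle is the local product decomposition of $\pi$ near a point of a deeper stratum: one must check carefully that in an (analytic or étale) neighborhood of $(\cF,\varphi)\times(\text{configuration})$ the Gieseker space $\Gi[r]{d}$ and the morphism $\pi$ really do decompose as a product of a smooth factor (coming from deforming the genuine bundle $\cF$ and from moving the points around in $\CC^2$) with products of punctual Hilbert schemes at the support points. For the Uhlenbeck side this is exactly the factorization property (2) of $\pi^d$ together with the local description of $\Uh[r]{d}$ near reducible sheaves as $\Bun{d'}\times(\text{a neighborhood of }0\text{ in }S^{d-d'}\CC^2)$; lifting this compatibly to $\Gi[r]{d}$ is where the real work sits, and this is precisely what is done in \cite{Baranovsky} and \cite[Exercise~5.15]{Lecture}, to which I would defer for the routine verifications.
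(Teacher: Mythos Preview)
There is a genuine error in your identification of the fibers. For $r\ge 2$ the fiber of $\pi\colon\Gi[r]{d}\to\Uh[r]{d}$ over a point of the form $(\cF,\varphi,\sum\lambda_i x_i)$ is \emph{not} a product of punctual Hilbert schemes. Recall that $\pi$ sends a framed torsion-free sheaf $(E,\varphi)$ to $(E^{\vee\vee},\varphi,\operatorname{Supp}(E^{\vee\vee}/E))$; thus $\pi^{-1}(x)$ parametrizes length-$\lambda_i$ quotients of the rank-$r$ locally free sheaf $E^{\vee\vee}\cong\cF$ supported at the $x_i$. Locally this is the punctual Quot scheme of $\shfO^{\oplus r}$, not the Hilbert scheme of ideals in $\shfO$. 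Its dimension is $r\lambda_i-1$, not $\lambda_i-1$ (this is the content of \cite[Exercise~5.15]{Lecture}), so
\[
\dim\pi^{-1}(x)=\sum_i(r\lambda_i-1)=r(d-d')-\ell(\lambda)=\tfrac12\codim\bigl(\Bun[\SL(r)]{d'}\times S_\lambda(\CC^2)\bigr),
\]
an \emph{equality}, not the strict inequality your computation gives for $r>1$. Your number $(d-d')-\ell(\lambda)$ is simply the wrong fiber dimension.

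The same misidentification undermines your irreducibility argument: Brian\c{c}on's theorem concerns punctual Hilbert schemes, whereas what is needed here is irreducibility of the punctual Quot scheme of $\shfO^{\oplus r}$. That is precisely the nontrivial input attributed in the footnote to Baranovsky and Ellingsrud--Lehn; it does not follow from the rank-$1$ statement. The paper's own argument proceeds exactly via this Quot-scheme description of the fibers and the reduction to the central fiber $\pi^{-1}(d\cdot 0)$, so once you replace Hilbert by Quot your outline becomes the intended one.
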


This semi-smallness result is proved for general symplectic
resolutions by Kaledin \cite{MR2283801}.

\cite[Exercise~5.15]{Lecture} asks the dimension of the central fiber
$\pi^{-1}(d\cdot 0)$. Let us explain why the estimate for the central
fiber is enough. Let us take $x\in\Uh[r]{d}$ and write it as
$(F,\varphi,\sum \lambda_i x_i)$, where $(E,\varphi)\in\Bun[\SL(r)]{d'}$,
$x_i\neq x_j$. The morphism $\pi$ is assigning $(E^{\vee\vee},\varphi,\operatorname{Supp}(E^{\vee\vee}/E))$ to a framed torsion free sheaf $(E,\varphi)\in\Gi{d}$. See \cite[Exercise~3.53]{Lecture}. Then
$\pi^{-1}(x)$ parametrizes quotients of $E^{\vee\vee}$ with given multiplicities $\lambda_i$ at $x_i$. Then it is clear that $\pi^{-1}(x)$ is isomorphic to the product of quotients of $\shfO^{\oplus r}$ with multiplicities $\lambda_i$ at $0$, i.e., $\prod_i \pi^{-1}(\lambda_i\cdot 0)$. If one knows each $\pi^{-1}(\lambda_i \cdot 0)$ has dimension $r\lambda_i - 1$, we have
$\dim\pi^{-1}(x) = \sum (r\lambda_i - 1) = \frac12 \codim \Bun[\SL(r)]{d'}\times S_\lambda(\CC^2)$. Thus it is enough to check that $\pi^{-1}(d\cdot 0) = rd-1$.

\subsection{Equivariant cohomology groups of Giesker partial compactification}

We have a group action of $G\times \GL(2)$ on $\Gi{d}$, $\Uh[r]{d}$ as in 
\subsecref{subsec:group-action}. The Gieseker-Uhlenbeck morphism
$\pi\colon\Gi{d}\to\Uh[r]{d}$ is equivariant. Let $\TT = T\times\CC^\times\times\CC^\times$ as in \subsecref{subsec:group-action}.
We will study the equivariant cohomology groups of Gieseker spaces
\begin{equation*}
    H^{[*]}_{\TT}(\Gi{d}), \quad H^{[*]}_{\TT,c}(\Gi{d})
\end{equation*}
with arbitrary and compact support respectively. We use the degree convention, i.e., the degree $0$ is $2dr$.

We denote the equivariant variables by $\vec{a} = (a_1,\dots,a_r)$ with $a_1+\dots+a_r = 0$ for $T$, and $\ve_1$, $\ve_2$ for $\CC^\times\times\CC^\times$. (See \eqref{eq:30}.)

We have the intersection pairing
\begin{equation*}
    H^{[*]}_\TT(\Gi{d})\otimes H^{[*]}_{\TT,c}(\Gi{d})\to
    H^*_\TT(\mathrm{pt});
    c\otimes c'\mapsto (-1)^{dr} \int_{\Gi{d}} c\cup c'.
\end{equation*}
This is of degree $0$. The sign $(-1)^{dr}$ is introduced to save
$(-1)^\star$ in the later formula. The factor $dr$ should be
understood as the half of the dimension of $\Gi{d}$. Similarly the
intersection form on $H^{[*]}_{\CC^\times\times\CC^\times}(\CC^2)$ has the
sign factor $(-1)^1 = (-1)^{\dim \CC^2/2}$.

\subsection{Heisenberg algebra via correspondences}\label{subsec:heis-algebra-via}

Let $n > 0$.
Let us consider a correspondence in the triple product
$\Gi{d+n}\times\Gi{d}\times\CC^2$:
\begin{equation*}
    P_n \defeq \left\{ (E_1,\varphi_1, E_2,\varphi_2, x)
      \in \Gi{d+n}\times\Gi{d}\times\CC^2 \middle|
      E_1\subset E_2, \text{Supp}(E_2/E_1) = \{x\}
      \right\}.
\end{equation*}
Here the condition $\text{Supp}(E_2/E_1) = \{x\}$ means the quotient
sheaf $E_2/E_1$ is $0$ outside $x$. In the left hand side the index
$d$ is omitted: we understand either $P_n$ is the disjoint union 
for various $d$, or $d$ is implicit from the situation. It is known that
$P_n$ is a lagrangian subvariety in $\Gi{d+n}\times\Gi{d}\times\CC^2$.

We have two projections $q_1\colon P_n\to \Gi{d+n}$, $q_2\colon P_n\to
\Gi{d}\times\CC^2$, which are proper. The convolution product gives an
operator
\begin{equation*}
    P^\Delta_{-n}(\alpha)\colon
    H^{[*]}_\TT(\Gi{d})\to H^{[*+\deg\alpha]}_\TT(\Gi{d+n});
    c\mapsto q_{1*}\left( q_2^*(c\otimes \alpha)\cap [P_n]\right)
\end{equation*}
for $\alpha\in H^{[*]}_{\CC^\times\times\CC^\times}(\CC^2)$.
The meaning of the superscript `$\Delta$' will be explained later.
We also consider the adjoint operator
\begin{equation*}
    P^\Delta_n(\alpha) = (P^\Delta_{-n}(\alpha))^*
    \colon H^{[*]}_{\TT,c}(\Gi{d+n})\to H^{[*+\deg\alpha]}_{\TT,c}(\Gi{d}).
\end{equation*}

A class $\beta\in H^{[*]}_{\CC^\times\times\CC^\times,c}(\CC^2)$ with
compact support gives operators
\(
P^\Delta_{-n}(\beta)\colon
    H^{[*]}_{\TT,c}(\Gi{d})\to H^{[*+\deg\beta]}_{\TT,c}(\Gi{d+n}),
\)
and
\(
    P^\Delta_n(\beta) = (P^\Delta_{-n}(\beta))^*
    \colon H^{[*]}_\TT(\Gi{d+n})\to H^{[*+\deg\beta]}_\TT(\Gi{d}).
\)

\begin{Theorem}[\cite{MR1386846,MR1441880} for $r=1$,
    \cite{Baranovsky} for $r\ge 2$]
    As operators on $\bigoplus_d H^{[*]}_\TT(\Gi{d})$ or $\bigoplus_d
    H^{[*]}_{\TT,c}(\Gi{d})$, we have the Heisenberg commutator relations
    \begin{equation}\label{eq:Heis}
        \left[ P_m^\Delta(\alpha), P_n^\Delta(\beta)\right]
        = r m\delta_{m,-n} \langle\alpha,\beta\rangle \id.
    \end{equation}
\end{Theorem}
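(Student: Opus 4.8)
The plan is to compute the commutator $[P_m^\Delta(\alpha),P_n^\Delta(\beta)]$ by the standard double-correspondence analysis, reducing everything to an intersection-theoretic calculation on fiber products of Gieseker spaces, and then to the rank-one case by the structure of the correspondences $P_n$. First I would set up the geometric composition: the operator $P_m^\Delta(\alpha)\circ P_n^\Delta(\beta)$ (for appropriate signs of $m,n$, say $m<0<n$ or vice versa) is given by convolution with the fiber product $P_{|m|}\times_{\Gi{d'}}P_{|n|}$ sitting inside a triple product of Gieseker spaces together with two copies of $\CC^2$. The key point is to decompose this fiber product into its irreducible components and identify the ``diagonal'' component where the two supporting points coincide; off that component the two orders of composition give isomorphic correspondences, so those contributions cancel in the commutator. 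This is the usual mechanism by which a commutator of creation/annihilation-type operators collapses to a local term.

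Next I would analyze the diagonal component. When $m=-n$, the surviving piece of $P_{n}\times_{\Gi{d+n}}P_{n}$ (and its counterpart with the orders reversed) is supported over the locus where $E_1=E_3$ and $x=y$, i.e.\ it is essentially a bundle over $\Gi{d}\times\CC^2$ whose fiber over $(E,\varphi,x)$ records a length-$n$ quotient of $E$ supported at $x$ and then its ``un-doing''. The excess-intersection / self-intersection computation here is what produces the scalar; concretely one must compute the Euler class of the appropriate normal bundle and integrate $\alpha\cup\beta$ over $\CC^2$, which by the degree conventions and the intersection pairing introduced above yields $\langle\alpha,\beta\rangle$ times a combinatorial factor. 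I would first do this for $r=1$, where $\Gi[1]{d}=(\CC^2)^{[d]}$ is the Hilbert scheme and the relevant fiber computation is the classical Nakajima/Grojnowski calculation giving the factor $m\,\delta_{m,-n}$; this is the content of the cited results \cite{MR1386846,MR1441880}.

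Finally I would upgrade from $r=1$ to general $r$. The correspondence $P_n$ for $\Gi{d}$ is built from quotients of $\shfO^{\oplus r}$ supported at a point, and near a generic point of the relevant stratum the local model is $r$ independent copies of the rank-one situation sitting over the reflexive hull $E^{\vee\vee}$; equivalently one uses the semi-smallness in \propref{prop:semismall} and the description of $\pi^{-1}(x)$ as a product of punctual quotient schemes of $\shfO^{\oplus r}$ to reduce the top-degree intersection number to $r$ times the rank-one answer. This is exactly where the coefficient $r$ in \eqref{eq:Heis} comes from. The adjointness statements $P_n^\Delta(\alpha)=(P_{-n}^\Delta(\alpha))^*$ are built into the definitions, so once one of the two orderings is computed the other follows formally, and the vanishing for $m\neq -n$ comes from the dimension count: for $m\neq -n$ the diagonal component has strictly smaller dimension than needed to contribute in the top degree, so no scalar term appears.

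I expect the main obstacle to be the bookkeeping in the excess-intersection step: correctly identifying the normal bundle of the diagonal component inside the ambient product, keeping track of the degree shifts and the sign $(-1)^{dr}$ in the intersection pairing, and checking that the contributions from the two orders of composition cancel \emph{except} on the diagonal. The reduction to $r=1$ is conceptually clean but requires care that the ``$r$ independent copies'' picture is valid equivariantly and that no cross-terms survive; everything else is either the classical Hilbert-scheme computation or formal manipulation with adjoints, which I would cite rather than reprove.
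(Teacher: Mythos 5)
The paper does not actually prove this theorem; it quotes it from the cited references (Grojnowski/Nakajima for $r=1$, Baranovsky for $r\ge 2$) and the prerequisites say proofs from \cite{Lecture} are omitted. That said, your sketch follows the classical approach in those sources: compute the commutator via the fiber products of the correspondences $P_{|m|},P_{|n|}$ over the intermediate Gieseker space; show that away from the diagonal (where the two support points in $\CC^2$ differ) the two orders of composition give canonically isomorphic correspondences so the commutator is supported on the diagonal; then do a dimension count to kill the $m\neq-n$ case and an excess-intersection computation on the diagonal for $m=-n$.

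The genuine gap is in how you extract the coefficient $r$. You assert that ``near a generic point of the relevant stratum the local model is $r$ independent copies of the rank-one situation,'' and that the description of $\pi^{-1}(x)$ as a ``product of punctual quotient schemes of $\shfO^{\oplus r}$'' reduces the top-degree intersection number to $r$ times the rank-one answer. This conflates two different product structures. The product decomposition of $\pi^{-1}(x)$ recalled after \propref{prop:semismall} is factorization over the \emph{distinct support points} $x_i$; each factor is the rank-$r$ punctual Quot scheme of length $\lambda_i$, not a product of $r$ rank-one punctual Hilbert schemes. Already for length $1$ the rank-$r$ punctual Quot scheme is $\proj^{r-1}$, whereas ``$r$ copies of the rank-one situation'' would be a point. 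The scalar $r$ actually comes from the geometry of that rank-$r$ Quot scheme (e.g.\ $\chi(\proj^{r-1})=r$ in the length-one case, bootstrapped via factorization), and this requires the bona fide rank-$r$ excess-intersection computation of Baranovsky/Ellingsrud--Lehn; it is not a formal reduction to $r=1$. A version of your ``$r$ copies'' picture is made precise by torus localization: $(\Gi{d})^T$ is a product of $r$ Hilbert schemes and, by \propref{prop:12.2.1}, $P^\Delta_m$ becomes the diagonal Heisenberg, so the commutator over $\bF_T$ is $r$ times the rank-one commutator. But that route uses the stable envelope and the Maulik--Okounkov coproduct formula, which the paper develops later, and a priori only gives the relation after localization --- so if you want to argue that way you should say so explicitly rather than present it as a local product structure of the correspondences themselves.
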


Here $\alpha$, $\beta$ are equivariant cohomology classes on $\CC^2$
with arbitrary or compact support. When the right hand side is
nonzero, $m$ and $n$ have different sign, hence one of $\alpha$, $\beta$ is compact support and the other is arbitrary support. Then $\langle\alpha,\beta\rangle$ is well-defined.

\begin{History}
    As mentioned in Introduction, the author \cite{Na-quiver} found
    relation between representation theory of affine Lie algebras and
    moduli spaces of instantons on $\CC^2/\Gamma$, where the affine
    Lie algebra is given by $\Gamma$ by the McKay correspondence. It
    was motivated by works by Ringel \cite{Ringel} and Lusztig
    \cite{MR1035415}, constructing upper triangular subalgebras of
    quantum enveloping algebras by representations of quivers.

    The above theorem can be regarded as the case $\Gamma = \{ 1\}$,
    but the Heisenberg algebra is not a Kac-Moody Lie algebra, and
    hence it was not covered in \cite{Na-quiver}, and dealt with later
    \cite{MR1386846,MR1441880,Baranovsky}. Note that a Kac-Moody Lie
    algebra only has finitely many generators and relations, while the
    Heisenberg algebra has infinitely many.

    A particular presentation of an algebra should {\it not\/} be
    fundamental, so it was desirable to have more intrinsic
    construction of those representations. More precisely, a
    definition of an algebra by convolution products is natural, but
    we would like to understand why we get a particular algebra,
    namely the affine Lie algebra in our case. We do not have a
    satisfactory explanation yet. The same applies to Ringel,
    Lusztig's constructions.
\end{History}

\begin{Exercise}\label{ex:Betti}
    
    \cite[Remark~8.19]{Lecture} Define operators $P^\Delta_{\pm
      1}(\alpha)$ acting on $\bigoplus_n H^{*}(S^n X)$ for a (compact)
    manifold $X$ in a similar way, and check the commutation relation
    \eqref{eq:Heis} with $r=1$.
\end{Exercise}

\subsection{Dimensions of cohomology groups}

When $r=1$, it is known that the generating function of dimension of
$H^{[*]}_\TT((\CC^2)^{[d]})$ over $\bA_T = H^{[*]}_\TT(\mathrm{pt})$
for $d\ge 0$ is
\begin{equation*}
    \sum_{d=0}^\infty \dim H^{[*]}_\TT((\CC^2)^{[d]}) q^d = 
    \prod_{d=1}^\infty \frac1{1-q^d}.
\end{equation*}
(See \cite[Chap.~5]{Lecture}.) This is also equal to the character of
the Fock space\footnote{The Fock space is the polynomial ring of infinitely many variables
$x_1$, $x_2$, \dots. The operators $P^\Delta_n(\alpha)$ act by either multiplication of $x_n$ or differentiation with respect to $x_n$ with appropriate constant multiplication. It has the highest weight vector (or the vacuum vector) $1$, which is killed by $P^\Delta_n(\alpha)$ with $n > 0$. The Fock space is spanned by vectors given by operators $P^\Delta_n(\alpha)$ successively to the highest weight vector.}of the Heisenberg algebra. Therefore the Heisenberg
algebra action produces all cohomology classes from the vacuum vector
$|\mathrm{vac}\rangle = 1_{(\CC^2)^{[0]}}\in H^{[*]}_\TT((\CC^2)^{[0]})$.

On the other hand we have
\begin{equation}\label{eq:1}
    \sum_{d=0}^\infty \dim H^{[*]}_\TT(\Gi{d}) q^d = 
    \prod_{d=1}^\infty \frac1{(1-q^d)^r}.
\end{equation}
for general $r$. Therefore the direct sum of these cohomology groups
is not an irreducible representation for the Heisenberg algebra.

Let us explain how to see the formula \eqref{eq:1}. Consider the torus
$T$ action on $\Gi{d}$. A framed torsion free sheaf $(E,\varphi)$ is
fixed by $T$ if and only if it is a direct sum
$(E_1,\varphi_1)\oplus\cdots \oplus (E_r,\varphi_r)$ of rank $1$ framed
torsion free sheaves. Rank $1$ framed torsion free sheaves are nothing
but ideal sheaves on $\CC^2$, hence points in Hilbert schemes. Thus
\begin{equation}\label{eq:2}
    (\Gi{d})^T = \bigsqcup_{d_1+\dots+d_r = d} (\CC^2)^{[d_1]}\times\cdots\times
    (\CC^2)^{[d_r]}.
\end{equation}
Hence
\begin{equation}\label{eq:31}
    \begin{split}
   \bigoplus_d H^{[*]}_\TT((\Gi{d})^T) &=
   \bigoplus_{d_1,\dots,d_r} H^{[*]}_\TT((\CC^2)^{[d_1]}\times\cdots\times
    (\CC^2)^{[d_r]})
\\
   &= \bigotimes_{i=1}^r \bigoplus_{d_i=0}^\infty H^{[*]}_\TT((\CC^2)^{[d_i]}).
    \end{split}
\end{equation}

It is not difficult to show that $H^{[*]}_\TT(\Gi{d})$ is free over
$\bA_T$. (For example, it follows from the vanishing of odd degree
nonequivariant cohomology group of $\Gi{d}$ and the spectral sequence
relating $H^{[*]}_\TT(\Gi{d})$ and $H^{[*]}(\Gi{d})\otimes \bA_T$.)
Therefore in order to compute the dimension of $H^{[*]}_\TT(\Gi{d})$
over $\bA_T$, we restrict equivariant cohomology groups to generic
points, that is to consider tensor products with the fractional field
$\bF_T$ of $\bA_T$. Then the localization theorem for equivariant
cohomology groups gives an isomorphism between \( H^{[*]}_\TT(\Gi{d})
\) and \( H^{[*]}_\TT((\Gi{d})^T) \) over $\bF_T$. Therefore the above
observation gives the formula \eqref{eq:1}.

In view of \eqref{eq:1}, we have the action of $r$ copies of
Heisenberg algebra on $\bigoplus_d H^{[*]}_\TT((\Gi{d})^T)$ and hence
on $\bigoplus_d H^{[*]}_\TT(\Gi{d})\otimes_{\bA_T}\bF_T$ by the
localization theorem.
It is isomorphic to the tensor product of $r$ copies of the Fock
module, so all cohomology classes are produced by the action.
This is a good starting point to understand $\bigoplus_d H^{[*]}_\TT(\Gi{d})$.
However this action cannot be defined over \emph{non-localized}
equivariant cohomology groups. In fact, $P^\Delta_n(\alpha)$ is the
`diagonal' Heisenberg in the product, and other non diagonal
generators have no description like convolution by $P_n$.

The correct algebra acting on $\bigoplus_d H^{[*]}_\TT(\Gi{d})$ is the
$\scW$-algebra $\scW(\mathfrak{gl}(r))$ associated with
$\mathfrak{gl}(r)$. It is the tensor product of the $\scW$-algebra
$\scW(\algsl(r))$ and the Heisenberg algebra (as the vertex
algebra). Its Verma module has the same size as the tensor product of
$r$ copies of the Fock module.

\begin{Theorem}
    Operators given by correspondences $P_n$ and multiplication by
    Chern classes of the tautological bundle $\cV$ make $\bigoplus_d
    H^{[*]}_\TT(\Gi{d})\otimes_{\bA_\TT}\bF_\TT$ as a
    $\scW(\mathfrak{gl}(r))$-module, isomorphic to a Verma module.
\end{Theorem}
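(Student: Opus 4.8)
The plan is to build the $\scW(\mathfrak{gl}(r))$-action in two stages: first produce a Heisenberg subalgebra together with a $\scW(\algsl(r))$-part, and then check that the resulting module is a Verma module by comparing graded dimensions, which by \eqref{eq:1} match the size of the Verma module. For the Heisenberg part I would take the diagonal operators $P_n^\Delta(\alpha)$; the previous theorem already gives the relations \eqref{eq:Heis}, so the free-field (Heisenberg) piece is in hand. The genuinely new generators must come from the Chern classes $c_k(\cV)$ of the tautological bundle. The first step, then, is to set up the generating series $\sum_k c_k(\cV)\, z^{-k}$ acting by cup product on $\bigoplus_d H^{[*]}_\TT(\Gi{d})\otimes_{\bA_\TT}\bF_\TT$, and to compute the commutators $[\,P_m^\Delta(\alpha),\ c_k(\cV)\,]$ using the correspondence $P_n$: restricting the tautological bundle to $P_n\subset \Gi{d+n}\times\Gi{d}\times\CC^2$, one has an exact sequence relating $q_1^*\cV$, $q_2^*\cV$ and a rank-$n$ piece supported over the $\CC^2$-factor, and pushing forward yields an explicit, computable expression. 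I expect this is where the bulk of the honest work lies.

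Second, I would diagonalize the $T$-action via the fixed-point decomposition \eqref{eq:2}: over $\bF_\TT$ the localization theorem identifies $H^{[*]}_\TT(\Gi{d})\otimes\bF_\TT$ with $\bigoplus H^{[*]}_\TT\bigl((\CC^2)^{[d_1]}\times\cdots\times(\CC^2)^{[d_r]}\bigr)\otimes\bF_\TT$, and hence with the tensor product of $r$ Fock modules, on which $r$ \emph{commuting} copies of the Heisenberg algebra act — call their generators $P_n^{(i)}(\alpha)$. The diagonal $P_n^\Delta(\alpha)$ is then $\sum_i P_n^{(i)}(\alpha)$ up to the normalization accounting for the factor $r$ in \eqref{eq:Heis}. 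The key computation is to express the localized Chern classes $c_k(\cV)$ in terms of the $P_n^{(i)}$: on the fixed locus $\cV|_{(\CC^2)^{[d_i]}}$ is the rank-$d_i$ tautological bundle twisted by the character $a_i$, so $c_k(\cV)$ becomes a symmetric function in the $r$ "boson colors" plus the equivariant parameters. Matching these against the Feigin–Frenkel / Miura presentation of $\scW(\mathfrak{gl}(r))$ — where the $\scW$-currents are the elementary symmetric functions of $r$ free bosons $\partial\phi_i$ shifted by $\ve_1,\ve_2$ — is precisely what identifies the algebra generated by $\{P_n^\Delta(\alpha)\}$ and $\{c_k(\cV)\}$ with $\scW(\mathfrak{gl}(r))$. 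This is the conceptual heart: one must see the geometrically-defined operators satisfy exactly the $\scW(\mathfrak{gl}(r))$ OPE, not just generate \emph{some} vertex algebra.

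Third, having an action of $\scW(\mathfrak{gl}(r))$, I would show the module is a highest-weight module: the vacuum class $1_{\Gi{0}} \in H^{[*]}_\TT(\Gi{0})$ is annihilated by all positive-mode generators (the $P_n^\Delta(\alpha)$ with $n>0$ raise $d$ only in the compact-support picture and lower it here; and a degree/support argument forces the $\scW$-current positive modes to kill it since $\Gi{0}$ is a point), and it generates everything over $\bF_\TT$ because the $P_n^\Delta$ alone already generate the full tensor product of Fock modules by the localization argument recalled in the text. Finally, to upgrade "highest-weight quotient of a Verma module" to "isomorphic to the Verma module," I would compare graded dimensions: by \eqref{eq:1} the character of $\bigoplus_d H^{[*]}_\TT(\Gi{d})$ is $\prod_{d\ge 1}(1-q^d)^{-r}$, which is exactly the character of the $\scW(\mathfrak{gl}(r))$ Verma module (the tensor product of $r$ Fock modules, as noted before the theorem), so the surjection from the Verma module is an isomorphism.

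The main obstacle is the second step — pinning down that the operators $P_n^\Delta(\alpha)$ and $c_k(\cV)$ close up into precisely the $\scW(\mathfrak{gl}(r))$ relations. Computing $[P_m^\Delta(\alpha), c_k(\cV)]$ via the correspondence is a finite but delicate equivariant push-pull calculation, and then recognizing the output as the Feigin–Frenkel generators requires carefully tracking the equivariant parameters $\ve_1,\ve_2,\vec a$ and the normalization constants (the factor $r$, the signs $(-1)^{dr}$, the degree shifts) so that the central charge and the screening-operator structure come out right. Everything else — the Heisenberg relations, the localization identification with $r$ Fock spaces, the highest-weight property, and the final dimension count — is either quoted from earlier in the excerpt or follows from a clean character comparison.
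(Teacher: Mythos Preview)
Your overall strategy --- localize to the $T$-fixed locus, identify the module with a tensor product of $r$ Fock spaces, express the Chern classes of $\cV$ through the $r$ colored bosons $P_n^{(i)}$, and match against the Miura free-field presentation of $\scW(\mathfrak{gl}(r))$ --- is essentially the Schiffmann--Vasserot route, and it works. There is one slip in your third step: the diagonal operators $P_n^\Delta$ alone do \emph{not} generate the full tensor product of $r$ Fock modules (this is exactly what the text emphasizes just before \eqref{eq:1}: the Heisenberg module is not irreducible for $r>1$). The fix is easy: once you have identified the module with the free-field Fock module for $\scW(\mathfrak{gl}(r))$ at generic level, it is a standard fact that this \emph{is} the Verma module; alternatively, the character match \eqref{eq:1} together with irreducibility of the Verma module at generic highest weight forces the map from the Verma module to be an isomorphism without any separate generation argument.

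The paper, however, takes a genuinely different route. It does not attempt to realize all the Miura generators via the higher $c_k(\cV)$. Instead, for $r=2$ (the case actually proved here) it uses the stable envelope $\Stab_\cC$ to transport the problem to the fixed locus, shows that $P_n^\Delta$ becomes the diagonal Heisenberg (\propref{prop:12.2.1}), and then computes only the single commutator $[\Delta c_1(\cV), P_n^\Delta]$ restricted to the anti-diagonal Fock space $F^-$: this is \thmref{thm:14.2.3}, and the answer is $nL_n$ with $L_n$ the Feigin--Fuchs Virasoro operator \eqref{eq:18}. The classical $r$-matrix computation and the $R$-matrix (\thmref{thm:reflection}) are what make this work. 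For general $G$ (\thmref{thm:main}) the paper replaces stable envelopes by hyperbolic restriction on $\IC$-sheaves and identifies $\scW(\g)$ via the Feigin--Frenkel characterization $\scW_\bA(\g)\cong\bigcap_i\Vir_{i,\bA}\otimes_\bA\Heis_\bA(\alpha_i^\perp)$ (\thmref{thm:FF}), reducing everything to the rank-one Virasoro computation just done. What your approach buys is directness and explicit formulas for the higher $\scW$-currents; what the paper's approach buys is that it needs only $c_1(\cV)$, avoids the delicate matching of all Miura generators, and --- crucially --- extends beyond type $A$, where there is no symplectic resolution and no tautological bundle to work with.
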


This result is due to Schiffmann-Vasserot \cite{MR3150250} and
Maulik-Okounkov \cite{MO} independently. We will prove this for $r=2$
in \secref{sec:r-matrix} and a similar result for general $G$ in
\thmref{thm:main}.

Let us give a comment on the statement, which is a little imprecise.
Operators discussed are defined over non-localized equivariant
cohomology groups $\bigoplus_d H^{[*]}_\TT(\Gi{d})$.
They and conventional generators of $\scW(\mathfrak{gl}(r))$ are
related by an explicit formula, involving elements in
$\CC(\ve_1,\ve_2)$ (see \eqref{eq:18} for Virasoro generators). This
is a reason why the above theorem is formulated over $\bF_\TT$. An
explicit formula of generators is known only for $\mathfrak{sl}(r)$,
and will not be reviewed, hence an interested reader should read the
original papers \cite{MR3150250,MO}.
Our approach is different. We introduce an $\bA$-form of the
$\scW(\g)$ for general $\g$ in \subsecref{subsec:integr-form}, and
show that it acts on the non localized equivariant intersection
cohomology group in \subsecref{subsec:inters-cohom-with}.

\subsection{Intersection cohomology group}\label{sec:inters-cohom-group}


Recall that the decomposition theorem has a nice form for a semi-small resolution $\pi\colon M\to X$:
\begin{equation}\label{eq:11}
   \pi_*(\cC_M) = \bigoplus_{\alpha,\chi} \IC(X_\alpha,\chi)\otimes H_{[0]}(\pi^{-1}(x_\alpha))_\chi,
\end{equation}
where we have used the following notation:
\begin{itemize}
 \item $\cC_M$ denotes the shifted constant sheaf $\CC_M[\dim M]$.
 \item $\IC(X_\alpha,\chi)$ denotes the intersection cohomology
complex associated with a simple local system $\chi$ on a Zariski open
dense subset in the smooth locus of an irreducible closed subvariety
$X_\alpha$ of $X$.  (We will simply write $\IC(X_\alpha)$ when $\chi$
is trivial. We may also write it $\IC(X)$ if $X_\alpha$ is the open
dense in $X$.)

 \item $H_{[0]}(\pi^{-1}(x_\alpha))$ is the homology group of the shifted degree $0$, which is the usual degree $\codim X_\alpha$. When $x_\alpha$ moves in $X_\alpha$, it forms a local system. $H_{[0]}(\pi^{-1}(x_\alpha))_\chi$ denotes its $\chi$-isotropic component.
\end{itemize}

\begin{Exercise}
    Let $\Gr(d,r)$ be the Grassmannian of $d$-dimensional subspaces in
    $\CC^r$, where $0\le d\le r$. Let $M = T^*\Gr(d,r)$. Determine $X
    = \Spec(\CC[M])$. Study fibers of the affinization morphism
    $\pi\colon M\to X$ and show that $\pi$ is semi-small. Compute
    graded dimensions of $IH^*$ of strata, using the well-known
    computation of Betti numbers of $T^*\Gr(d,r)$.
\end{Exercise}

Consider the Gieseker-Uhlenbeck morphism $\pi\colon\Gi{d}\to\Uh[r]{d}$.
By \propref{prop:semismall}, any fiber $\pi^{-1}(x_\alpha)$ is irreducible. Therefore all the local systems are trivial, and
\begin{equation}\label{eq:23}
   \pi_*(\cC_{\Gi{d}}) = \bigoplus_{d=|\lambda|+d'} 
   \IC(\overline{\Bun[\SL(r)]{d'}\times S_\lambda(\CC^2)})
   	\otimes \CC[\pi^{-1}(x^{d'}_\lambda)],
\end{equation}
where $x^{d'}_\lambda$ denotes a point in the stratum $\Bun[\SL(r)]{d'}\times S_\lambda(\CC^2)$,
and $[\pi^{-1}(x^{d'}_\lambda)]$ denotes the fundamental class of $\pi^{-1}(x^{d'}_\lambda)$, regarded as an element of $H_{[0]}(\pi^{-1}(x^{d'}_\lambda))$.

The main summand is $\IC(\overline{\Bun[\SL(r)]{d}}) = \IC(\Uh[r]{d})$, and other smaller summands could be understood recursively as follows.
Let us write the partition $\lambda$ as $(1^{\alpha_1} 2^{\alpha_2} \dots)$, when $1$ appears $\alpha_1$ times, $2$ appears $\alpha_2$ times, and so on. We set $l(\lambda) = \alpha_1 + \alpha_2 + \cdots$ and $\operatorname{Stab}(\lambda) = S_{\alpha_1}\times S_{\alpha_2}\times \cdots$. Note
that we have total $l(\lambda)$ distinct points in $S_\lambda(\CC^2)$. The group $\operatorname{Stab}(\lambda)$ is the group of symmetries of a configuration in $S_\lambda(\CC^2)$.
We have a finite morphism
\[
   \xi\colon \Uh[\SL(r)]{d'}\times (\CC^2)^{l(\lambda)}/\operatorname{Stab}(\lambda)
   \to \overline{\Bun[\SL(r)]{d'}\times S_\lambda(\CC^2)}
\]
extending the identity on $\Bun[\SL(r)]{d'}\times
S_\lambda(\CC^2)$. Then $\IC(\overline{\Bun[\SL(r)]{d'}\times
  S_\lambda(\CC^2)})$ is the direct image of $\IC$ of the domain.  We
have the K\"unneth decomposition for the domain, and the factor
$(\CC^2)^{l(\lambda)}/\operatorname{Stab}(\lambda)$ is a quotient of a
smooth space by a finite group. Therefore the $\IC$ of the second
factor is the (shifted) constant sheaf. We thus have
\[
   \IC(\overline{\Bun[\SL(r)]{d'}\times S_\lambda(\CC^2)}) \cong
   	\xi_*\left(\IC(\Uh[r]{d'})\boxtimes 
	\cC_{(\CC^2)^{l(\lambda)}/\operatorname{Stab}(\lambda)}\right).
\]
Thus
\begin{multline*}
   H^{[*]}_\TT(\Gi{d}) = \bigoplus_{d=|\lambda|+d'} IH^{[*]}_\TT(\Uh[r]{d'})
   \otimes H^{[*]}_\TT((\CC^2)^{l(\lambda)}/\operatorname{Stab}(\lambda))
   	\otimes \CC[\pi^{-1}(x^{d'}_\lambda)]. 
\end{multline*}

This decomposition nicely fits with the Heisenberg algebra
action. Note that the second and third factors are both
$1$-dimensional. Thus we have $1$-dimensional space for each partition
$\lambda$. If we take the sum over $\lambda$, it has the size of the
Fock module, and it is indeed the submodule generated by the vector
$[x^{d'}_\emptyset]\in H^{[*]}(\Gi[r]{d'})$, the fundamental class of
the point $x^{d'}_\emptyset$ corresponding to the empty partition.
(Note $x^{d'}_\emptyset$ is a point in $\Bun[r]{d'}$, regarded as a
point in $\Gi[r]{d'}$ via $\pi$.)
This statement can be proved by the analysis of the convolution algebra in \cite[Chap.~8]{CG}, but it is intuitively clear as the $1$-dimensional space corresponding to $\lambda$ is the span of $P^\Delta_{-1}(1)^{\alpha_1} P^\Delta_{-2}(1)^{\alpha_2}\cdots|\mathrm{vac}\rangle$.

The Heisenberg algebra acts trivially on the remaining factor
\[
    \bigoplus_d IH^{[*]}_\TT(\Uh[r]{d}).
\]
%
The goal of these notes is to see that it is a module of $\scr
W(\algsl(r))$, and the same is true for ADE groups $G$, not only for
$\SL(r)$.

\begin{Exercise}\label{ex:trivial}
    Show the above assertion that the Heisenberg algebra acts
    trivially on the first factor $\bigoplus_{d}
    IH^{[*]}_\TT(\Uh[\SL(r)]{d})$.
\end{Exercise}

\section{Stable envelopes}

The purpose of this lecture is to explain the stable envelope
introduced in \cite{MO}. It will nicely explain a relation between
$H^{[*]}_\TT(\Gi{d})$ and $H^{[*]}_\TT((\Gi{d})^T)$. This is what we
need to clarify, as we have explained in the previous lecture. The
stable envelope also arises in many other situations in geometric
representation theory. Therefore we explain it in a wider context, as
in the original paper \cite{MO}.

\subsection{Setting -- symplectic resolution}\label{subsec:setting}

Let $\pi\colon M\to X$ be a resolution of singularities of an affine
algebraic variety $X$. We assume $M$ is symplectic. We suppose that a
torus $\TT$ acts on both $M$, $X$ so that $\pi$ is
$\TT$-equivariant. We suppose $\TT$-action on $X$ is linear. We also
assume a technical condition that $\TT$ contains $\CC^\times$ such
that $X$ is a cone with respect to $\CC^\times$ and the weight of the
symplectic form is positive except in
\subsecref{subsec:case-when-pi}. (This assumption will be used when we
quote a result of Namikawa later.)
Let $T$ be a subtorus of $\TT$ which preserves the symplectic form of
$M$.

\begin{Example}
Our basic example is $M = \Gi{d}$, $X = \Uh[r]{d}$ and $\pi$ is the
Gieseker-Uhlenbeck morphism with the same $\TT$, $T$ as above. 
In fact, we can also take a larger torus $T\times\CC^\times_{\rm hyp}$
in $\TT$, where $\CC^\times_{\rm hyp}\subset
\CC^\times\times\CC^\times$ is given by $t\mapsto (t,t^{-1})$.
\end{Example}

\begin{Example}
    Another example is $M = T^*(\text{flag variety}) = T^* (G/B)$, $X
    = (\text{nilpotent variety})$ and $\pi$ is the
    Grothendieck-Springer resolution. Here $T$ is a maximal torus of
    $G$ contained in $B$, and $\TT = T\times\CC^\times$, where
    $\CC^\times$ acts on $X$ by scaling on fibers.
\end{Example}

We can also consider the same $\pi\colon M\to X$ as above with smaller
$\TT$, $T$.

Let $M^T$ be the $T$-fixed point locus in $M$. It decomposes $M^T =
\bigsqcup F_\alpha$ to connected components, and each $F_\alpha$ is a
smooth symplectic submanifold of $M$. Let $i\colon M^T\to M$ be the
inclusion. We have the pull-back homomorphism
\begin{equation*}
    i^*\colon H^{[*]}_\TT(M) \to H^{[*+\codim X^T]}_\TT(M^T)
    = \bigoplus_\alpha H^{[*+\codim F_\alpha]}_\TT(F_\alpha).
\end{equation*}
Here we take the degree convention as before. Our degree $0$ is the
usual degree $\dim_\CC M$ for $H^{[*]}_\TT(M)$, and $\dim_\CC F_\alpha$
for $H^{[*]}_\TT(F_\alpha)$. Since $i^*$ preserves the usual degree, it
shifts our degree by $\codim F_\alpha$. Each $F_\alpha$ has its own
codimension, but we denote the direct sum as $H^{[*+\codim
  X^T]}_\TT(M^T)$ for brevity.

The stable envelope we are going to construct goes in the opposite direction
\(
    H^{[*]}_\TT(M^T)\to H^{[*]}_\TT(M)
\)
and preserves (our) degree.

In the above example $M = \Gi{d}$, the $T$ and $T\times\CC^\times_{\rm
  hyp}$-fixed point loci are
\begin{equation*}
    \begin{split}
        & (\Gi{d})^T = \bigsqcup_{d_1+\cdots+d_r=d}
    \Gi[1]{d_1}\times\cdots\times \Gi[1]{d_r},
\\
        & (\Gi{d})^{T\times\CC^\times_{\rm hyp}}
        = \bigsqcup_{|\lambda_1|+\cdots+|\lambda_r|}
        \{ I_{\lambda_1} \oplus\cdots\oplus I_{\lambda_r}\},
    \end{split}
\end{equation*}
where $\lambda_i$ is a partition, and $I_{\lambda_i}$ is the
corresponding monomial ideal sheaf (with the induced framing).

Also
\begin{equation*}
    T^*(G/B)^T = W,
\end{equation*}
where $W$ is the Weyl group.

\subsection{Chamber structure}

Let us consider the space $\Hom_{\mathrm{grp}}(\CC^\times,T)$ of one
parameter subgroups in $T$, and its real form
$\Hom_{\mathrm{grp}}(\CC^\times,T)\otimes_\ZZ \RR$. 
A generic one parameter subgroup $\rho$ satisfies
$M^{\rho(\CC^\times)} = M^T$. But if $\rho$ is special (the most
extreme case is $\rho$ is the trivial), the fixed point set
$M^{\rho(\CC^\times)}$ could be larger.
This gives us a `chamber' structure on
$\Hom_{\mathrm{grp}}(\CC^\times,T)\otimes_\ZZ \RR$, where a chamber is
a connected component of the complement of the union of hyperplanes
given by $\rho$ such that $M^{\rho(\CC^\times)}\neq M^T$.

\begin{Exercise}\label{ex:chamber}
    (1) In terms of $T$-weights on tangent spaces $T_p M$ at various
    fixed points $p\in M^T$, describe the hyperplanes.

    (2) Show that the chamber structure for $M = T^*(\text{flag
      variety})$ is identified with usual Weyl chambers.

    (3) Show that the chamber structure for $M = \Gi{d}$ is identified
    with the usual Weyl chambers for $\SL(r)$.

    (4) Compute the chamber structure for $M = \Gi{d}$, but with the
    larger torus $T\times\CC^\times_{\rm hyp}$.
\end{Exercise}

For a chamber $\cC$, we have the {\it opposite chamber\/}
$-\cC$ consisting of one parameter subgroups $t\mapsto
\rho(t^{-1})$ for $\rho\in\cC$.

The stable envelope depends on a choice of a chamber $\cC$.

\subsection{Attracting set}

Let $\cC$ be a chamber and $\rho\in\cC$. We define the {\it
  attracting set\/} $\cA_{X}$ by
\begin{equation*}
    \cA_{X} = \left\{ x\in X \middle| \exists
      \lim_{t\to 0} \rho(t)x
    \right\}.
\end{equation*}
We similarly define the attracting set $\cA_{M}$ in $M$ in
the same way. As $\pi$ is proper, we have $\cA_M = \pi^{-1}(\cA_{X})$. We put the scheme structure on $\cA_M$ as $\pi^{-1}(\cA_X)$ in these notes.

\begin{Example}
    Let $X = \Uh[2]{d}$.  In the quiver description, $\cA_{X}$
    consists of closed $\GL(d)$-orbits $\GL(d)(B_1,B_2,I,J)$ such that
    $J f(B_1,B_2) I$ is upper triangular for any noncommutative
    monomial $f\in \CC\langle x,y\rangle$. It is the {\it tensor
      product variety\/} introduced in \cite{Na-Tensor}, denoted by
    $\pi(\mathfrak Z)$ therein.

    As framed sheaves, $\cA_M$ consists of $(E,\varphi)$ which are written as
    an extension $0\to E_1 \to E\to E_2\to 0$ (compatible with the framing)
    for some $E_1\in\Gi[1]{d_1}$, $E_2\in\Gi[1]{d_2}$ with $d=d_1+d_2$.
\end{Example}

We have the following diagram
\begin{equation}\label{eq:13}
    X^{\rho(\CC^\times)} = X^T
    \overset{p}{\underset{i}{\leftrightarrows}} \cA_X
    \overset{j}{\rightarrow} X,
\end{equation}
where $i$, $j$ are natural inclusion, and $p$ is given by 
\(
   \cA_X\ni x\mapsto \lim_{t\to 0} \rho(t) x.
\)
\begin{NB}
This is wrong.

We also have the corresponding diagram for $M$, and two diagrams are commutative under
$\pi$.
\end{NB}%
If $X$ is a representation of $T$, $X^{T}$ (resp.\ $\cA_X$) is the
$0$-weight space (resp.\ the direct sum of nonnegative weight spaces
with respect to $\rho(\CC^\times)$). Hence $X^T$ and $\cA_X$ are
closed subvarieties, and $p$ is a morphism. The same is true for
general $X$ as an affine algebraic variety and the $T$-action is
linear.

\subsection{Leaves}\label{subsec:leaves}

Let $p$ denote the map $\cA_M\ni x\mapsto\lim_{t\to 0}\rho(t)x\in
M^T$.  This is formally the same as one for $\cA_X\to X^T$, but it is
not continuous as the limit point may jump at a special point $x$, as
we will see an example below. Nevertheless it is set-theoretically
well-defined.
Since $M^T = \bigsqcup F_\alpha$, we have the corresponding
decomposition of $\cA_M = \bigsqcup p^{-1}(F_\alpha)$. Let
$\Leaf_\alpha = p^{-1}(F_\alpha)$. By the Bialynicki-Birula theorem
(\cite{BB}, see also \cite{MR523824}), $p\colon\Leaf_\alpha\to F_\alpha$
is a $\TT$-equivariant affine bundle.
Similarly $\Leaf^-_\alpha\to F_\alpha$ denote the corresponding affine
bundle for the opposite chamber $-\cC$.
(For quiver varieties, they are in fact isomorphic to vector bundles
$L^\pm_\alpha$ below. See \cite[Prop.~3.14]{Na-Tensor}.)

Let us consider the restriction of the tangent bundle $TM$ to a fixed
point component $F_\alpha$. It decomposes into weight spaces with
respect to $\rho$:
\begin{equation}\label{eq:3}
    TM|_{F_\alpha} = \bigoplus T(m),
    \qquad T(m) = \{ v \mid \rho(t)v = t^m v \}.
\end{equation}
Then $T\Leaf_\alpha = \bigoplus_{m\ge 0} T(m)$. Note also $T(0) =
TF_\alpha$. Since $T$ preserves the symplectic form, $T(m)$ and
$T(-m)$ are dual to each other. From these, one can also check that
$\Leaf_\alpha\xrightarrow{j\times p} M\times F_\alpha$ is a lagrangian
embedding.

For a later purpose let 
\begin{equation}
    \label{eq:19}
    L^\pm_\alpha \defeq \bigoplus_{\pm m > 0} T(m).
\end{equation}
The normal bundle of $F_\alpha$ in $\Leaf_\alpha$ (resp.\
$\Leaf^-_\alpha$) is $L^+_\alpha$ (resp.\ $L^-_\alpha$).

\begin{Example}\label{ex:T*P}
    Let $\pi\colon M = T^*\proj^1\to X = \CC^2/\pm$. Let $T=\CC^\times$
    act on $X$ and $\CC^2/\pm 1$ so that it is given by
    $t(z_1,z_2)\bmod\pm = (tz_1, tz_2^{-1})\bmod\pm$. Then $X^T$
    consists of two points $\{0,\infty\}$ in the zero section
    $\proj^1$ of $T^*\proj^1$. If we take the `standard' chamber
    containing the identity operator, $\Leaf_0$ is the zero section
    $\proj^1$ minus $\infty$. On the other hand $\Leaf_\infty$ is the
    (strict transform of) the axis $z_2 = 0$. See Figure~\ref{fig:leaf}.

    For the opposite chamber, $\Leaf_0$ is the axis $z_1 = 0$, and
    $\Leaf_\infty$ is the zero section minus $0$.

\begin{figure}[htbp]
    \centering
\begin{tikzpicture}[scale=0.3]
\draw (-5,10) parabola bend (5,-5) (15,10);
\draw[thick,fill] (1.5,0) circle (0.3);
\draw[thick] (8.5,0) circle (0.3);
\draw[thick,bend right,distance=40] (1.8,0) node[below left] {$0$}
to node[midway,below] {$\Leaf_0$}
(8.2,0) node[below right] {$\infty$};
\draw[thick,dotted,bend right,distance=25] (8.7,0.3) to
node[midway] {$\Leaf_\infty$}
(12,10);
\end{tikzpicture}
    \caption{Leaves in $T^*\proj^1$}
    \label{fig:leaf}
\end{figure}
\end{Example}

\begin{Definition}\label{def:order}
We define a partial order $\ge$ on the index set $\{\alpha\}$ for the fixed
point components so that
\begin{equation*}
    \overline{\Leaf_\beta}\cap F_\alpha\neq\emptyset
    \Longrightarrow \alpha \le \beta.
\end{equation*}
\end{Definition}
We have $\infty\le 0$ in Example~\ref{ex:T*P}.

Let
\begin{equation*}
    \cA_{M,\le\alpha} = \bigsqcup_{\beta:\beta\le\alpha} \Leaf_\beta. 
\end{equation*}
Then $\cA_{M,\le\alpha}$ is a closed subvariety. We define
$\cA_{M,<\alpha}$ in the same way.

\begin{Proposition}\label{prop:filtration}
    \textup{(1)} $H_{[*]}^\TT(\cA_{M,\le\alpha})$ vanishes in the odd
    degree.

    \textup{(2)} We have an exact sequence
\begin{equation*}
    0\to
    H_{[*]}^\TT(\cA_{M,<\alpha}) \to H_{[*]}^\TT(\cA_{M,\le\alpha})
    \to H_{[*]}^\TT(\Leaf_\alpha) \to 0.
\end{equation*}
\end{Proposition}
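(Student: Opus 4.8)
The plan is to induct along the partial order of \defref{def:order}, peeling off one leaf at a time and using the long exact sequence in $\TT$-equivariant Borel--Moore homology attached to a closed--open decomposition, together with a single parity input. First I would record the elementary point that for \emph{any} down-closed subset $S$ of the index set $\{\alpha\}$, the union $\bigsqcup_{\beta\in S}\Leaf_\beta$ is closed in $M$: since $\cA_M=\pi^{-1}(\cA_X)$ is closed and equals $\bigsqcup_\gamma\Leaf_\gamma$, a limit point of $\bigsqcup_{\beta\in S}\Leaf_\beta$ lies in some $\Leaf_\gamma$ and in some $\overline{\Leaf_\beta}$ with $\beta\in S$; applying $x\mapsto\lim_{t\to0}\rho(t)x$ (and using that $\overline{\Leaf_\beta}$ is closed and $\rho(\CC^\times)$-stable) exhibits a point of $\overline{\Leaf_\beta}\cap F_\gamma$, whence $\gamma\le\beta$ and so $\gamma\in S$. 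In particular $\cA_{M,<\alpha}$ is closed in $\cA_{M,\le\alpha}$ with locally closed complement $\Leaf_\alpha$, and likewise after deleting a maximal element from an arbitrary down-closed $S$. This yields the long exact sequence
\begin{equation*}
  \cdots\to H^\TT_{[k]}(\cA_{M,<\alpha})\to H^\TT_{[k]}(\cA_{M,\le\alpha})\to H^\TT_{[k]}(\Leaf_\alpha)\xrightarrow{\ \partial\ }H^\TT_{[k-1]}(\cA_{M,<\alpha})\to\cdots
\end{equation*}
(all groups carrying the same degree shift, since every subvariety in sight sits in the smooth symplectic $M$), and the analogous sequence for each one-step inclusion $\cA_{M,S\setminus\{\alpha\}}\subset\cA_{M,S}$.

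The one non-formal ingredient I would isolate is that $H^\TT_{[*]}(\Leaf_\alpha)$ has no odd part. By the Bialynicki--Birula statement recalled in \subsecref{subsec:leaves}, $p\colon\Leaf_\alpha\to F_\alpha$ is a $\TT$-equivariant affine bundle, so $H^\TT_{[*]}(\Leaf_\alpha)\cong H^\TT_{[*]}(F_\alpha)$ up to an even shift; $F_\alpha$ being smooth and symplectic (hence of even complex dimension), equivariant Poincar\'e duality identifies this with $H^*_\TT(F_\alpha)$ up to an even shift; and $H^*_\TT(F_\alpha)$ is concentrated in even degrees because $H^*(F_\alpha)$ is, so that $F_\alpha$ is $\TT$-equivariantly formal and $H^*_\TT(F_\alpha)\cong H^*_\TT(\mathrm{pt})\otimes H^*(F_\alpha)$ with both factors even. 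Here the evenness of $H^*(F_\alpha)$ is a structural property of the situation --- $F_\alpha$ is a component of the fixed locus of the conical symplectic resolution $\pi$, hence itself a conical symplectic resolution of its affinization, and such varieties have no odd cohomology (see \cite{MO}); in the running examples this is transparent, as $F_\alpha$ is a product of punctual Hilbert schemes when $M=\Gi{d}$ and a single point when $M=T^*(G/B)$.

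Granting this, part (1) follows by induction on $|S|$: for $S=\emptyset$ it is vacuous, and in the inductive step, choosing $\alpha$ maximal in $S$, the outer two terms $H^\TT_{[k]}(\cA_{M,S\setminus\{\alpha\}})$ (by induction) and $H^\TT_{[k]}(\Leaf_\alpha)$ (by the parity input) both vanish for $k$ odd, so the middle term $H^\TT_{[k]}(\cA_{M,S})$ is squeezed to $0$; taking $S=\{\beta:\beta\le\alpha\}$ gives the stated vanishing. For part (2), I apply (1) to each of $\cA_{M,<\alpha}$, $\cA_{M,\le\alpha}$ and $\Leaf_\alpha$; then in the long exact sequence above every connecting map $\partial\colon H^\TT_{[k]}(\Leaf_\alpha)\to H^\TT_{[k-1]}(\cA_{M,<\alpha})$ runs between groups of opposite parity, one of which always vanishes, so $\partial=0$ identically and the sequence splits into the short exact sequences asserted.

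The main obstacle is precisely the parity input, i.e.\ the evenness of $H^*(F_\alpha)$: in a fully self-contained account this must either be quoted from the theory of conical symplectic resolutions or checked case by case for the $M$ at hand, whereas it is exactly the nontrivial geometric fact that makes the clean filtration statement true. Everything else --- the closedness of the pieces $\cA_{M,S}$, the long exact sequence, and the reduction of both claims to a parity count --- is formal.
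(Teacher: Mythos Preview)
Your argument is correct and is essentially the same as the paper's: both use the long exact sequence for the closed--open pair $\cA_{M,<\alpha}\subset\cA_{M,\le\alpha}$ with complement $\Leaf_\alpha$, reduce the parity of $H^\TT_{[*]}(\Leaf_\alpha)$ to that of $H^\TT_{[*]}(F_\alpha)$ via the affine bundle structure, and then induct. The paper cites Kaledin \cite{MR2283801} (and \cite[Exercise~5.15]{Lecture} for $\Gi{d}$) for the odd vanishing of $H^\TT_{[*]}(F_\alpha)$ where you cite \cite{MO}, and the paper phrases the induction as ``descending on $\alpha$'' rather than your cleaner ``ascending on $|S|$'', but the substance is identical; your version is if anything more carefully stated, including the explicit verification that down-closed unions of leaves are closed, which the paper simply asserts.
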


\begin{proof}
Consider the usual long exact sequence
\begin{equation*}
    H_{[*]}^\TT(\cA_{M,<\alpha}) \to H_{[*]}^\TT(\cA_{M,\le\alpha})
    \to H_{[*]}^\TT(\Leaf_\alpha) \xrightarrow{\delta}
    H_{[*-1]}^\TT(\cA_{M,<\alpha}).
\end{equation*}
Recall that $\Leaf_\alpha$ is an affine bundle over $F_\alpha$. Hence
the pullback $H_{[*]}^\TT(F_\alpha)\to H_{[*]}^\TT(\Leaf_\alpha)$ is
an isomorphism. It is known that $H^\TT_{[*]}(F_\alpha)$ vanishes in
odd degrees. (It follows from \cite[Exercise~5.15]{Lecture} for
$\Gi{d}$, and is a result of Kaledin \cite{MR2283801} in general.)

Let us show that $H_{[*]}^\TT(\cA_{M,\le\alpha})$ vanishes in odd
degrees by the descending induction on $\alpha$. In particular, the
assertion for $H^\TT_{[*]}(\cA_{M,<\alpha})$ implies $\delta = 0$, i.e., (2).

If $\alpha$ is larger than the maximal element, $\cA_{M,\le\alpha} =
\emptyset$, hence the assertion is true. Suppose that the assertion is true for $H^\TT_{[*]}(\cA_{M,<\alpha})$. Then the above exact sequence and the odd vanishing of $H^\TT_{[*]}(\Leaf_\alpha)$ implies the odd vanishing of
$H^\TT_{[*]}(\cA_{M,\le\alpha})$.
\end{proof}

\begin{Exercise}
    \textup{(1)} Determine $\cA_M$ for $M = T^*(\text{flag variety})$.

    \textup{(2)} Determine $\cA_M$ for $M = \Gi{d}$ with respect to
    $T\times\CC^\times_{\rm hyp}$.
\end{Exercise}

\subsection{Steinberg type variety}

Recall that Steinberg variety is the fiber product of $T^*(\text{flag
  variety})$ with itself over $\text{nilpotent variety}$. Its
equivariant $K$-group realizes the affine Hecke algebra (see
\cite[Ch.~7]{CG}), and it plays an important role in the geometric
representation theory.

Let us recall the definition of the product in our situation. We
define $Z = M\times_{X} M$, the fiber product of $M$ itself over
$X$. Its equivariant Borel-Moore homology group has the convolution
product:
\begin{equation*}
    H^\TT_{[*]}(Z)\otimes H^\TT_{[*]}(Z)\ni c\otimes c'
    \mapsto p_{13*}(p_{12}^*c \cap p_{23}^* c')\in H^\TT_{[*]}(Z),
\end{equation*}
where $p_{ij}\colon M\times M\times M\to M\times M$ is the projection
to the product of $i^{\mathrm{th}}$ and $j^{\mathrm{th}}$
factors.\footnote{We omit explanation of pull-back with supports
  $p_{12}^*$, $p_{23}^*$, etc. See \cite{CG} for more detail.} When
$M\to X$ is semi-small, one can check that the multiplication
preserves the shifted degree $[*]$.

We introduce a variant, mixing the fixed point set $M^T$ and the whole
variety $M$:
Let $Z_\cA$ be the fiber product of $\cA_M$ and $M^T$ over $X^T$, considered as a closed subvariety in $M\times M^T$:
\begin{equation*}
   Z_\cA = \cA_M \times_{X^T} M^T \subset M\times M^T,
\end{equation*}
where $\cA_M\to X^T$ is the composite of $p\colon\cA_M\to M^T$ and
$\pi^T\colon M^T\to M^T_0$, or alternatively $\pi|_{\cA_M}\colon
\cA_M\to \cA_{M_0}$ and $p\colon \cA_{M_0}\to X^T$. Here we denote
projections $\cA_M\to M^T$ and $\cA_{X}\to X^T$ both by $p$ for
brevity.
As a subvariety of $M\times M^T$, $Z_\cA$ consists of pairs $(x,x')$
such that $\lim_{t\to 0} \rho(t)\pi(x) = \pi^T(x')$.

The convolution product as above defines a $(H^\TT_{[*]}(Z),
H^\TT_{[*]}(Z^T))$-bi\-module structure on $H^\TT_{[*]}(Z_\cA)$.
\begin{NB}
    The right $H^\TT_{[*]}(Z^T)$-module structure is clear. For the
    left $H^\TT_{[*]}(Z)$-module structure, note the following:
    Suppose $(x_1,x_2,x_3)\in p_{12}^{-1}(Z)\cap
    p_{23}^{-1}(Z_\cA)$. It means that $\pi(x_1) = \pi(x_2)$,
    $p\pi(x_2) = \pi^T(x_3)$. Therefore
    $p\pi(x_1)=\pi^T(x_3)$. Therefore $(x_1,x_3)\in Z_\cA$. Moreover
    $p_{13}|_{p_{12}^{-1}(Z)\cap p_{23}^{-1}(Z_\cA)}$ is proper.
\end{NB}%
Here $Z^T$ is the $T$-fixed point set in $Z$, or equivalently the
fiber product of $M^T$ with itself over $X^T$.

In our application, we use $H^\TT_{[*]}(Z)$ as follows:
we shall construct an operator $H^{[*]}_{\TT}(M^T)\to H^{[*]}_{\TT}(M)$ by
\begin{equation}\label{eq:5}
    H^{[*]}_{\TT}(M^T)\ni \xi \mapsto 
    \cL\ast \xi \defeq
    p_{1*}(p_2^* \xi\cap \cL) \in H^{[*]}_{\TT}(M)
\end{equation}
for a suitably chosen (degree $0$) equivariant class $\cL$ in
$H_{[0]}^{\TT}(Z_\cA)$. Note that the projection $Z_\cA\to M$ is
proper, hence the operator in this direction is well-defined. On the
other hand, $Z_\cA\to M^T$ is {\it not\/} proper. See
\subsecref{subsub:adjoint} below.

Recall $\cA_M$ and $M^T$ decompose as $\bigsqcup \Leaf_\beta$, $\bigsqcup F_\alpha$ respectively. Therefore
\begin{equation*}
   Z_\cA = \bigsqcup_{\alpha,\beta} \Leaf_\beta\times_{X^T} F_\alpha.
\end{equation*}
We have the projection $p\colon \Leaf_\beta\times_{X^T} F_\alpha \to F_\beta\times_{X^T} F_\alpha$.

\begin{Proposition}\label{prop:irr}
    $Z_\cA$ is a lagrangian subvariety in $M\times M^T$. If $Z_1^F$,
    $Z_2^F$, \dots denote the irreducible components of
    $\bigsqcup_{\alpha,\beta} F_\beta\times_{X^T} F_\alpha$, then the
    closures of their inverse images 
\[
    Z_1 \defeq
    \overline{p^{-1}(Z_1^F)}, Z_2\defeq \overline{p^{-1}(Z_1^F)}, \dots
\]
    are the irreducible components of $Z_\cA$.
\end{Proposition}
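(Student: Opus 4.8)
The plan is to deduce everything from the affine-bundle structure $p\colon\Leaf_\beta\to F_\beta$ established in \subsecref{subsec:leaves}, together with the lagrangian property of the embeddings $\Leaf_\alpha\xrightarrow{j\times p}M\times F_\alpha$. First I would check that $Z_\cA$ is lagrangian. We have $\dim Z_\cA = \dim \cA_M$ because $\cA_M\to X^T$ factors through $p\colon\cA_M\to M^T$ and the fibers of $Z_\cA=\cA_M\times_{X^T}M^T\to\cA_M$ are (open pieces of) the fixed-point fibers $\pi^{-1}(\pi^T(x'))^T$; more cleanly, on each piece $\Leaf_\beta\times_{X^T}F_\alpha$ the map to $F_\beta\times_{X^T}F_\alpha$ is an affine bundle with fiber $L^+_\beta$, so the dimension equals $\dim F_\beta+\dim L^+_\beta+(\dim F_\alpha-\dim F_\beta)$ whenever the base is nonempty, wait — more simply, $\Leaf_\beta\times_{X^T}F_\alpha$ fibers over $\Leaf_\beta$ with fibers inside $F_\alpha$, and since $p\colon\Leaf_\beta\to F_\beta$ is the relevant map, a point of $F_\alpha$ lies in the fiber iff it maps to $p(\ell)$ under $\pi^T$, so this is a single point when $\Leaf_\beta\xrightarrow{p}F_\beta$ and $F_\alpha\xrightarrow{\pi^T}$ land in the same point of $M^T_0$. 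Thus $\dim(\Leaf_\beta\times_{X^T}F_\alpha)=\dim\Leaf_\beta$. Summing, $\dim Z_\cA=\dim\cA_M=\tfrac12\dim M$, using that $\cA_M$ is lagrangian in $M$ (this follows from $T\Leaf_\alpha=\bigoplus_{m\ge0}T(m)$ being lagrangian in $TM|_{F_\alpha}$, as noted after \eqref{eq:3}). Then one checks the symplectic form $\omega_M\oplus(-\omega_{M^T})$ restricts to zero on $Z_\cA$: on the smooth locus of $\Leaf_\beta\times_{X^T}F_\alpha$ the tangent space is spanned by $T\Leaf_\beta$ (which is $\omega_M$-isotropic) and directions along $F_\alpha$ pulled back via $p$, and since $p$ is the Bialynicki-Birula limit map it is $\omega$-compatible in the appropriate sense; this is the routine part.

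Next, the irreducible-component statement. The decomposition $Z_\cA=\bigsqcup_{\alpha,\beta}\Leaf_\beta\times_{X^T}F_\alpha$ shows each piece is, via the affine bundle $p$, irreducible over each irreducible component of $F_\beta\times_{X^T}F_\alpha$. So I would argue: (i) $p^{-1}(Z_i^F)$ is irreducible because it is an affine bundle (locally trivial fibration with irreducible fiber $L^+_\beta$) over the irreducible base $Z_i^F$; hence its closure $Z_i=\overline{p^{-1}(Z_i^F)}$ is irreducible. (ii) These closures cover $Z_\cA$, since $Z_\cA=\bigcup_{\alpha,\beta}\Leaf_\beta\times_{X^T}F_\alpha$ and each $\Leaf_\beta\times_{X^T}F_\alpha=\bigcup_i p^{-1}(Z_i^F\cap(F_\beta\times_{X^T}F_\alpha))$, whose closures are among the $Z_j$'s. (iii) No $Z_i$ is contained in another: by the dimension computation above, each $p^{-1}(Z_i^F)$ has dimension $\dim Z_i^F+\dim L^+_\beta$, and since $\dim(F_\beta\times_{X^T}F_\alpha)+\dim L^+_\beta$ is forced to equal $\tfrac12\dim M$ on every nonempty piece (the lagrangian property again), all the $Z_i$ have the same dimension $\tfrac12\dim M$; two distinct irreducible closed subvarieties of the same dimension cannot contain one another. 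Therefore the $Z_i$ are exactly the irreducible components.

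I expect the main obstacle to be step (iii) — more precisely, verifying uniformly that \emph{every} nonempty stratum $\Leaf_\beta\times_{X^T}F_\alpha$ is pure of dimension $\tfrac12\dim M$. The a priori bound from the BB-decomposition only gives $\dim\cA_M\le\tfrac12\dim M$ on each attracting leaf; equidimensionality across the mixed strata $\Leaf_\beta\times_{X^T}F_\alpha$ with $\alpha\neq\beta$ needs the fact that $\dim F_\alpha-\dim F_\beta$ is compensated exactly by the difference in fiber dimensions, which ultimately rests on the symplectic pairing between $T(m)$ and $T(-m)$ on \emph{each} fixed component and the properness of $\pi$ (giving $\cA_M=\pi^{-1}(\cA_X)$). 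One must also handle the subtlety that $p\colon\cA_M\to M^T$ is not continuous (the limit jumps), so "affine bundle'' is only literally true on each leaf $\Leaf_\beta$, not on $\cA_M$; this is why one takes closures $\overline{p^{-1}(Z_i^F)}$ rather than working with $p$ globally. A clean way around the discontinuity is to pass to $Z_\cA^T\subset M^T\times M^T$, note its irreducible components are the $Z_i^F$, and then pull back along the stratified affine-bundle structure stratum by stratum, taking closures at the end.
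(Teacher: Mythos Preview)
Your dimension computations are wrong, and the missing ingredient is the semi-smallness of $\pi^T\colon M^T\to X^T$, which you never invoke.

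Concretely: in your first paragraph you assert that the fiber of $\Leaf_\beta\times_{X^T}F_\alpha\to\Leaf_\beta$ over $\ell$ is a single point, hence $\dim(\Leaf_\beta\times_{X^T}F_\alpha)=\dim\Leaf_\beta$. This is false. That fiber is $(\pi^T|_{F_\alpha})^{-1}\bigl(\pi^T(p(\ell))\bigr)$, a fiber of $\pi^T$, which is typically positive-dimensional. Relatedly, your claim that $\cA_M$ is lagrangian in $M$ is wrong: what the paper records after \eqref{eq:3} is that $\Leaf_\beta$ is lagrangian in $M\times F_\beta$ via $j\times p$, so $\dim\Leaf_\beta=\tfrac12(\dim M+\dim F_\beta)$, not $\tfrac12\dim M$. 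Your target dimension $\tfrac12\dim M$ in step (iii) is therefore also wrong; the correct value is $\tfrac12(\dim M+\dim F_\alpha)$, which depends on $\alpha$.

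The paper's argument goes through the base $F_\beta\times_{X^T}F_\alpha$ rather than through $\cA_M$. Because $\pi^T\colon M^T\to X^T$ is semi-small (this is Kaledin's result quoted after \propref{prop:semismall}), every irreducible component $Z^F_\nu$ of $F_\beta\times_{X^T}F_\alpha$ has dimension exactly $\tfrac12(\dim F_\beta+\dim F_\alpha)$. Pulling back along the affine bundle $p\colon\Leaf_\beta\to F_\beta$, whose fiber dimension is $\tfrac12\codim_M F_\beta$, gives
\[
\dim Z_\nu=\tfrac12(\dim F_\beta+\dim F_\alpha)+\tfrac12\codim_M F_\beta=\tfrac12(\dim M+\dim F_\alpha).
\]
This is half of $\dim(M\times F_\alpha)$, so $Z_\nu$ is half-dimensional in the correct ambient space. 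Since the $F_\alpha$ are disjoint connected components of $M^T$, equidimensionality is only needed within each $M\times F_\alpha$, where it holds by the formula above; your no-containment argument (iii) then goes through. Your steps (i) and (ii) are fine.
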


\begin{proof}
   Consider an irreducible component $Z_\nu^F$ of $F_\beta\times_{X^T} F_\alpha$. Using the semi-smallness of $\pi^T\colon M^T\to X^T$, one can check that $Z_\nu^F$ is half-dimensional. Then the dimension of $Z_\nu$ is
\begin{equation*}
   \frac12 (\dim F_\beta + \dim F_\alpha) + \frac 12 \codim_X F_\beta
   	= \frac12 (\dim X + \dim F_\alpha),
\end{equation*}
as the dimension of fibers of $\Leaf_\beta$ is the half of
codimension of $F_\beta$. Therefore $Z_\nu$ is half-dimensional in
$F_\beta\times M$.

We omit the proof that $Z_\nu$ is lagrangian.
\end{proof}

\subsection{Polarization}

Consider the normal bundle $N_{F_\alpha/M}$ of $F_\alpha$ in $M$. It
is the direct sum $L^+_\alpha\oplus L^-_\alpha$, where $L^\pm_\alpha$
is as in \eqref{eq:19}. Since $L^+_\alpha$, $L^-_\alpha$ are dual to
each other with respect to the symplectic form, the equivariant Euler
class $e(N_{F_\alpha/M})$ is a square up to sign:
\begin{equation*}
    e(N_{F_\alpha/M}) = e(L_\alpha^+) e(L_\alpha^-)
    = (-1)^{\codim_M F_\alpha/2} e(L^+_\alpha)^2.
\end{equation*}

Let us consider $H^*_T(\mathrm{pt})$-part of the equivariant classes, i.e., weights of fibers of vector bundles as $T$-modules. In many situations, we have another preferred choice of a square root of $(-1)^{\codim_M F_\alpha/2} e(N_{F_\alpha/M})|_{H^*_T(\mathrm{pt})}$. For example, suppose $M = T^*Y$ and the $T$-action on $M$ is coming from a $T$-action on $Y$. Then $F_\alpha = T^* N$ for a submanifold $N\subset Y$, we have $T(T^*Y) = TY\oplus T^* Y$, and
\(
    N_{F_\alpha/M} = N_{N/Y}\oplus N_{N/Y}^*,
\)
hence
\(
   (-1)^{\codim_M F_\alpha/2} e(N_{F_\alpha/M}) = e(N_{N/Y})^2.
\)
This choice $e(N_{N/Y})$ of the square root is more canonical than
$e(L^+_\alpha)$ as it behaves more uniformly in $\alpha$. We call such
a choice of a square root of $(-1)^{\codim_M F_\alpha/2}
e(N_{F_\alpha/M})|_{H^*_T(\mathrm{pt})}$ {\it polarization}.
We will understand a polarization as a choice of $\pm$ for each
$\alpha$, e.g., $e(L^+_\alpha)|_{H^*_T(\mathrm{pt})} = \pm
e(N_{N/Y})|_{H^*_T(\mathrm{pt})}$ for this example.

Our main example $\Gi{d}$ is not a cotangent bundle, but is a symplectic reduction of a symplectic vector space, which is a cotangent bundle. Therefore we also have a natural choice of a polarization for $\Gi{d}$.

\subsection{The case when $\pi$ is an isomorphism}\label{subsec:case-when-pi}

Before giving a definition of the stable envelope, let us consider the
situation when $\pi\colon M\to X$ is an {\it isomorphism}. In this
case we have
\begin{equation*}
    Z_\cA = \cA_M = \cA_X = \bigsqcup_\alpha \Leaf_\alpha,
\end{equation*}
where the decomposition is by connected components. Therefore unlike a
general situation, all $\Leaf_\alpha$ are smooth closed subvarieties
in $M$ as $p$ is a morphism in this case.

As it will be clear below, we do not have an extra $\CC^\times$-action
which scales the symplectic form. Therefore we have $\TT = T$. However
we use the notation $T$ for equivariant cohomology groups, the reason
will be clear soon.

Let
\begin{equation*}
    F_\alpha
    \overset{p_\alpha}{\underset{i_\alpha}{\leftrightarrows}} \Leaf_\alpha
    \overset{j_\alpha}{\rightarrow} M
\end{equation*}
be the restriction of the diagram \eqref{eq:13} to $\Leaf_\alpha$.
Then for $\gamma_\alpha\in H^T_{[*]}(F_\alpha)$ we have
\begin{equation*}
    [Z_\cA]\ast \gamma_\alpha = j_{\alpha*} p_\alpha^*(\gamma_\alpha).
\end{equation*}
In particular, the following properties are clearly satisfied.
\begin{equation*}
    \begin{split}
        & i_\alpha^* ([Z_\cA]\ast \gamma_\alpha) =
        e(L_\alpha^-)\cup \gamma_\alpha,
\\
        & i_\beta^* ([Z_\cA]\ast \gamma_\alpha) = 0 \quad
        \text{for $\beta\neq\alpha$}.
    \end{split}
\end{equation*}

\subsection{Definition of the stable envelope}\label{sec:defin-stable-envel}

Now we return back to a general situation.

Let $\iota_{\beta,\alpha}\colon F_\beta\times F_\alpha\to M\times M^T$
denote the inclusion.

\begin{Theorem}\label{thm:stable-envel}
    Choose and fix a chamber $\cC$ and a polarization $\pm$. There
    exists a unique homology class $\cL\in H^T_{[0]}(Z_\cA)$ with the
    following three properties\textup:
 
 \textup{(1)} $\cL|_{M\times F_\alpha}$ is supported on $\bigcup_{\beta\le\alpha} \overline{\Leaf_\beta}\times_{X^T} F_\alpha$.
 
 \textup{(2)} Let $e(L^-_\alpha)$ denote the equivariant Euler class
 of the bundle $L^-_\alpha$ over $F_\alpha$.  We have
 \begin{equation*}
  \iota_{\alpha,\alpha}^*\cL = \pm e(L^-_\alpha)\cap [\Delta_{F_\alpha}].
\end{equation*}

\textup{(3)} For $\beta < \alpha$, we have $\iota_{\beta,\alpha}^*\cL = 0$.
\end{Theorem}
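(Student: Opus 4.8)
The plan is to use the stratification $\cA_M = \bigsqcup_\beta \Leaf_\beta$ together with the long exact sequences from \propref{prop:filtration} to build $\cL$ one fixed-point component at a time, in order of the partial order of \defref{def:order}. Fix the chamber $\cC$ and polarization. Write $\cL = \sum_\alpha \cL_\alpha$ where $\cL_\alpha \defeq \cL|_{M\times F_\alpha} \in H^T_{[0]}(\cA_M\times_{X^T} F_\alpha)$; properties (1)--(3) concern each $\cL_\alpha$ separately, so it suffices to construct each $\cL_\alpha$ (there is no coupling between distinct $\alpha$). For fixed $\alpha$, property (1) says $\cL_\alpha$ lives in $H^T_{[0]}(\cA_{M,\le\alpha}\times_{X^T} F_\alpha)$, and the filtration of \propref{prop:filtration} (tensored with the fixed smooth factor $F_\alpha$, which only shifts degrees and preserves odd-vanishing) gives the short exact sequence
\begin{equation*}
    0\to H^T_{[0]}(\cA_{M,<\alpha}\times_{X^T}F_\alpha)
    \to H^T_{[0]}(\cA_{M,\le\alpha}\times_{X^T}F_\alpha)
    \to H^T_{[0]}(\Leaf_\alpha\times_{X^T}F_\alpha)\to 0.
\end{equation*}

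\emph{Existence.} The rightmost group $H^T_{[0]}(\Leaf_\alpha\times_{X^T}F_\alpha)$: since $\Leaf_\alpha\to F_\alpha$ is an affine bundle and the fiber product over $X^T$ of $\Leaf_\alpha$ (via $p$) with $F_\alpha$ contains the diagonal copy $\Delta_{F_\alpha}$, I would first produce a natural preimage of $\pm e(L^-_\alpha)\cap[\Delta_{F_\alpha}]$ there. Concretely, pull the diagonal class $[\Delta_{F_\alpha}]\in H^T_{[*]}(F_\alpha\times_{X^T}F_\alpha)$ back along $p$ and multiply by the polarization sign to get a class $\widetilde c_\alpha\in H^T_{[0]}(\Leaf_\alpha\times_{X^T}F_\alpha)$ whose restriction $\iota_{\alpha,\alpha}^*$ is exactly $\pm e(L^-_\alpha)\cap[\Delta_{F_\alpha}]$ — here one uses that the normal bundle of $F_\alpha$ in $\Leaf_\alpha^-$ is $L^-_\alpha$, so excess-intersection gives the Euler class factor, and property (3), $\iota_{\beta,\alpha}^*=0$ for $\beta<\alpha$, is automatic because $\Leaf_\alpha\times_{X^T}F_\alpha$ meets $F_\beta\times F_\alpha$ in lower dimension (this is where \propref{prop:irr} and the lagrangian/dimension count enter). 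Then surjectivity of the last map in the exact sequence above lets me lift $\widetilde c_\alpha$ to the desired $\cL_\alpha\in H^T_{[0]}(\cA_{M,\le\alpha}\times_{X^T}F_\alpha)$, and property (1) is the statement that it lies in this group.

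\emph{Uniqueness.} Two choices of $\cL_\alpha$ differ by an element of the kernel $H^T_{[0]}(\cA_{M,<\alpha}\times_{X^T}F_\alpha)$, i.e.\ a class supported on $\bigcup_{\beta<\alpha}\overline{\Leaf_\beta}\times_{X^T}F_\alpha$. I would show such a difference must vanish by a descending induction on $\beta<\alpha$ using the restriction maps $\iota_{\beta,\alpha}^*$: property (3) forces the restriction to each $F_\beta\times F_\alpha$ to be zero, and a class in $H^T_{[0]}$ of $\overline{\Leaf_\beta}\times_{X^T}F_\alpha$ that restricts to $0$ on the open dense $\Leaf_\beta\times_{X^T}F_\alpha$ (after peeling off smaller strata by induction) must already be zero, because these Borel--Moore homology groups are free $\bA_T$-modules concentrated in even degree and the relevant localization/support argument (localizing at the generic point of $\operatorname{Lie}T$, where $F_\beta\times_{X^T}F_\alpha$ has no extra $T$-fixed excess) shows the restriction to the open stratum is injective on the associated graded of the support filtration. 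Thus $\cL_\alpha$ is unique, and $\cL=\sum_\alpha\cL_\alpha$ is the unique class with (1)--(3).

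\emph{Main obstacle.} The delicate point is the uniqueness step: controlling $H^T_{[0]}(\overline{\Leaf_\beta}\times_{X^T}F_\alpha)$ and showing that the restriction to the open dense leaf stratum is injective on the relevant graded piece. This requires knowing that the fiber products $F_\beta\times_{X^T}F_\alpha$ are half-dimensional with the right components (\propref{prop:irr}), that the leaves $\Leaf_\beta$ are affine bundles (Bialynicki--Birula, as in \subsecref{subsec:leaves}), and the even-degree vanishing of \propref{prop:filtration}(1); one then has to argue that no nonzero degree-$[0]$ (top-dimensional) class can be supported on a proper closed subvariety while restricting trivially to all the $F_\beta\times F_\alpha$. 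The existence step is comparatively formal once one has the exact sequence and the excess-intersection computation of $\iota_{\alpha,\alpha}^*$ on the diagonal.
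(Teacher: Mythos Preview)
Your uniqueness argument is essentially the paper's: take the maximal $\beta_0<\alpha$ on which the difference $\cL^1_\alpha-\cL^2_\alpha$ is supported, restrict via $\iota_{\beta_0,\alpha}^*$, and observe that this restriction is $\sum a'_k\, e(L^-_{\beta_0})\cap[Z^F_k]$, which cannot vanish unless all $a'_k=0$ since $e(L^-_{\beta_0})$ is a nonzero element of $H^*_T(\mathrm{pt})$ and the $[Z^F_k]$ are linearly independent. The paper phrases this as a $T$-degree count (against the weaker condition $(3)'$), but the content is the same.

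Your existence argument has a genuine gap. You assert that property~(3) is ``automatic because $\Leaf_\alpha\times_{X^T}F_\alpha$ meets $F_\beta\times F_\alpha$ in lower dimension'', but (3) is a condition on the \emph{lift} $\cL_\alpha\in H^T_{[0]}(\cA_{M,\le\alpha}\times_{X^T}F_\alpha)$, not on $\widetilde c_\alpha$. Any two lifts of $\widetilde c_\alpha$ differ by a class supported on $\cA_{M,<\alpha}\times_{X^T}F_\alpha$, and such classes can (and do) have nonzero restriction to $F_\beta\times F_\alpha$ for $\beta<\alpha$. Already in the $T^*\CP^1$ case (Example~\ref{ex:T*P}) the natural lift $\pm[\overline{\Leaf_0}]$ fails~(3): its restriction to $\{\infty\}\times\{0\}$ is nonzero, and one must add the correction term $\pm[\Leaf_\infty]$ to cancel it. Your exact-sequence lift produces (1) and (2) but not (3). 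One can salvage your approach by an inductive correction---at the maximal $\beta$ where (3) fails, subtract the appropriate combination of components $[Z_k^{(\beta,\alpha)}]$, using the same Euler-class injectivity that drives uniqueness---but you have not supplied this step.

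The paper takes a different route for existence. It invokes Namikawa's one-parameter deformation $\scr M,\scr X$ of $M,X$ over $\CC$, in which the generic fiber $\pi^t\colon M^t\to X^t$ is an isomorphism. Over such a fiber the leaves $\Leaf_\alpha^t$ are disjoint \emph{closed} smooth subvarieties (\subsecref{subsec:case-when-pi}), so $\sum_\alpha\pm[\Leaf_\alpha^t]$ trivially satisfies (1)--(3). One then defines $\cL_\alpha=\pm\lim_{t\to 0}[\Leaf_\alpha^t]$ via specialization in Borel--Moore homology, and the three properties pass to the limit. This deformation argument bypasses the inductive correction entirely and manufactures the correction terms automatically---in the $T^*\CP^1$ example, $\Leaf_0^t$ specializes to $\overline{\Leaf_0}\cup\Leaf_\infty$, which is exactly where the extra $[\Leaf_\infty]$ comes from.
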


\begin{Remark}
    Since $Z_\cA$ is lagrangian in $M\times M^T$, $H^T_{[0]}(Z_\cA)$
    has a base by irreducible components of $Z_\cA$. The same is true
    for the $\TT$-equivariant Borel-Moore homology, and even for the
    non-equivariant homology. In particular, though $\cL$ is
    constructed in $H^T_{[0]}(Z_\cA)$, it automatically gives a
    well-defined class in $H^\TT_{[0]}(Z_\cA)$. This is important as
    we eventually want to understand $H^\TT_{[*]}(M)$.

    Note that the property (2) still holds in $\TT$-equivariant
    cohomology group by the construction. However
    $\iota_{\beta,\alpha}^*\cL \neq 0$ in general.
\end{Remark}

\begin{proof}[Proof of the existence]
    We first prove the existence. 

    We apply the construction in \subsecref{subsec:case-when-pi} after
    a deformation as follows.

    We use the following fact due to Namikawa \cite{MR2746388}: $M$
    and $X$ have a $1$-parameter deformation $\scr M$, $\scr X$
    together with $\Pi\colon \scr M\to \scr X$ over $\CC$ such that
    the original $M$, $X$ are fibers over $0\in\CC$, and other fibers
    $M^t$, $X^t$ ($t\neq 0$) are isomorphic, both are smooth and
    affine. Moreover the $T$-action extends to $M^t$, $X^t$.

For $\Gi{d}\to\Uh[r]{d}$, such a deformation can be defined by using quiver description~: we perturb the moment map equation as $[B_1,B_2]+IJ = t\id$. The $\TT$-action does not preserve this equation, as it scales $t\id$. But the $T$-action preserves the equation. For $T^*(\text{flag variety})\to(\text{nilpotent variety})$, we use deformation to semisimple adjoint orbits. (See \cite[\S3.4]{CG}.)

We define $Z_{\cA}^t$ for $M^t$ in the same way. It decomposes as
$Z_{\cA}^t = \bigsqcup \Leaf_\alpha^t$ as in
\subsecref{subsec:case-when-pi}. We consider the fundamental class
$[Z_{\cA}^t] = \sum [\Leaf_{\alpha}^t]$ . We now define
\begin{equation*}
   \cL = \sum_\alpha \cL_\alpha; \qquad
   	\cL_\alpha \defeq \pm \lim_{t\to 0} [\Leaf_\alpha^t],
\end{equation*}
where $\lim_{t\to 0}$ is the specialization in Borel-Moore homology
groups (see \cite[\S2.6.30]{CG}).

The conditions (1),(2),(3) are satisfied for $\sum \pm
[\Leaf_\alpha^t]$.
Taking the limit $t\to 0$, we see that three conditions (1),(2),(3)
are satisfied also for $\cL$.
\end{proof}

The uniqueness can be proved under a weaker condition. Let us explain
it before starting the proof.

Note that $T$ acts on $M^T$ trivially. Therefore the equivariant
cohomology $H^{[*]}_T(M^T)$ is isomorphic to $H^{[*]}(M^T)\otimes
H^*_T(\mathrm{pt})$. The second factor $H^*_T(\mathrm{pt})$ is the
polynomial ring $\CC[\operatorname{Lie}T]$.

We define the {\it degree\/} $\deg_T$ on $H^{[*]}_T(M^T)$ as the
degree of the component of $H^*_T(\mathrm{pt}) =
\CC[\operatorname{Lie}T]$. Now (3) is replaced by
\begin{equation*}
   \deg_T \iota_{\beta,\alpha}^*\cL < \frac 12 \codim_{M} F_\beta.
   \leqno{(3)'}
\end{equation*}

\begin{proof}[Proof of the uniqueness]
    We decompose $\cL = \sum \cL_\alpha$ according to $M^T = \bigsqcup
    F_\alpha$ as above. Let $Z_1$, $Z_2$, \dots be irreducible
    components of $Z$ as in \propref{prop:irr}.
Each $Z_k$ is coming from an irreducible component of
$F_\beta\times_{X^T} F_\alpha$. So let us indicate it as
$Z_k^{(\beta,\alpha)}$.
Since $\cL$ is the top degree class, we have $\cL_\alpha = \sum a_k
[Z_k^{(\beta,\alpha)}]$ with some $a_k\in\CC$. Moreover by (1),
$Z_k^{(\beta,\alpha)}$ with nonzero $a_k$ satisfies $\beta\le\alpha$.

Consider $Z_k^{(\alpha,\alpha)}$. By (2), we must have
$Z_k^{(\alpha,\alpha)} = \overline{\Leaf_\alpha}$, $a_k = \pm 1$,
where $\overline{\Leaf_\alpha}$ is mapped to $Z_\cA = M\times_{X^T} M^T$
by $(\text{inclusion})\times p$.

Suppose that $\cL^1_\alpha$, $\cL^2_\alpha$ satisfy conditions
(1),(2),(3)'. Then the above discussion says $\cL^1_\alpha -
\cL^2_\alpha = \sum a'_k [Z_k^{(\beta,\alpha)}]$, where
$Z_k^{(\beta,\alpha)}$ with nonzero $a'_k$ satisfies $\beta <
\alpha$. Suppose that $\cL^1_\alpha\neq \cL^2_\alpha$ and take a
maximum $\beta_0$ among $\beta$ such that there exists
$Z_k^{(\beta,\alpha)}$ with $a'_k\neq 0$.

Consider the restriction
$\iota_{\beta_0,\alpha}^*(\cL^1_\alpha-\cL^2_\alpha)$. By the maximality,
only those $Z_k^{(\beta,\alpha)}$'s with $\beta = \beta_0$ contribute
and hence
\begin{equation*}
    \iota_{\beta_0,\alpha}^*(\cL^1_\alpha-\cL^2_\alpha)
    = \sum a'_k e(L^-_{\beta_0})\cap [Z_k^F],
\end{equation*}
where $Z_k^F$ is an irreducible component of
$F_{\beta_0}\times_{X^T} F_\alpha$ corresponding to
$Z_k^{(\beta_0,\alpha)}$.

Since $e(L^-_{\beta_0})|_{H^*_T(\mathrm{pt})}$ is the product of
weights $\lambda$ such that $\langle \rho, \lambda\rangle < 0$
(cf. \eqref{eq:3}), it has degree exactly equal to $\codim
F_\beta/2$. But it contradicts with the condition (3)'. Therefore
$\cL^1_\alpha = \cL^2_\alpha$.
\end{proof}

\begin{Definition}
    The operator defined by the formula \eqref{eq:5} given by the
    class $\cL$ constructed in \thmref{thm:stable-envel} is called the
    {\it stable envelope}. It is denoted by $\Stab_{\cC}$. (The
    dependence on the polarization is usually suppressed.)
\end{Definition}

\begin{Example}
    Consider $\pi\colon M = T^*\proj^1\to X = \CC^2/\pm$ as in
    \ref{ex:T*P}. Since $\infty\le 0$, $\cL_\infty = \cL|_{M\times
      \{\infty\}} = \pm e(L^-_\infty)\cap[\Delta_{\infty}]$. This is
    equal to $\pm [\Leaf_\infty]$ if we identify $M\times\{\infty\}$
    with $M$. Next consider $\cL_0 = \cL|_{M\times \{0\}}$. We
    consider $\Leaf_0^t$ in a deformation $M^t$. It decomposes into
    the union $\Leaf_0\cup \Leaf_\infty$ as $t\to 0$. Therefore $\cL_0
    = \pm ([\overline{\Leaf_0}] + [\Leaf_\infty])$. Alternatively we
    consider the restriction of $[\overline{\Leaf_0}]$ and
    $[\Leaf_\infty]$ to $\infty$. The tangent spaces of
    $\overline{\Leaf_0}$ and $\Leaf_\infty$ at $\infty$ are dual to
    each other as $T$-modules. Therefore the restriction of
    $[\overline{\Leaf_0}]$ and $[\Leaf_\infty]$ cancel. Thus the
    property (3) holds.
\end{Example}

\begin{Exercise}(See \cite[Ex.~5.10.5]{2014arXiv1406.2381B}.)
    Compute the stable envelope for $M = T^*\CP^{r-1}$, $T =
    (\CC^\times)^{r-1}$. Here $T$ is a maximal torus of $\SL(r)$
    acting on $M$ in the natural way.
\end{Exercise}

Let us list several properties of stable envelopes.

\subsection{Adjoint}\label{subsub:adjoint}

By a general property of the convolution product, the adjoint
operator
\begin{equation*}
    \Stab_\cC^*\colon H^{[*]}_{\TT,c}(M) \to H^{[*]}_{\TT,c}(M^T)
\end{equation*}
is given by changing the first and second factors in \eqref{eq:5}:
\begin{equation*}
    H^{[*]}_{\TT,c}(M)\ni c \mapsto 
    (-1)^{\codim X^T} p_{2*}(p_1^* c\cap \cL) \in H^{[*]}_{\TT,c}(M^T),
\end{equation*}
where the sign comes from our convention on the intersection form.

\subsection{Image of the stable envelope}

From the definition, the stable envelope defines a homomorphism
\begin{equation}\label{eq:4}
    H^{[*]}_\TT(M^T) \to H_{[-*]}^\TT(\cA_M),
\end{equation}
so that the original $\Stab_\cC$ is the composition of the above
together with the pushforward $H_{[-*]}^\TT(\cA_M)\to H_{[-*]}^\TT(M)$
of the inclusion $\cA_M\to M$ and the Poincar\'e duality
$H_{[-*]}^\TT(M)\cong H^{[*]}_\TT(M)$.

\begin{Proposition}
    \eqref{eq:4} is an isomorphism.
\end{Proposition}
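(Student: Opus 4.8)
The plan is to prove that the map \eqref{eq:4}, $H^{[*]}_\TT(M^T)\to H_{[-*]}^\TT(\cA_M)$, is an isomorphism by exploiting the filtration of $\cA_M$ by the closed subvarieties $\cA_{M,\le\alpha}$ introduced in \subsecref{subsec:leaves}, together with the defining properties (1), (2), (3) of the class $\cL$ from \thmref{thm:stable-envel}. First I would set up the comparison of filtrations: on the target side we have, by \propref{prop:filtration}, short exact sequences
\[
   0\to H_{[*]}^\TT(\cA_{M,<\alpha}) \to H_{[*]}^\TT(\cA_{M,\le\alpha})
   \to H_{[*]}^\TT(\Leaf_\alpha) \to 0,
\]
and $H_{[*]}^\TT(\Leaf_\alpha)\cong H_{[*]}^\TT(F_\alpha)$ via the affine bundle projection; on the source side $H^{[*]}_\TT(M^T)=\bigoplus_\alpha H^{[*]}_\TT(F_\alpha)$ is already split, so we may order the summands compatibly with the partial order $\ge$ of \defref{def:order} and filter it accordingly. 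The claim is that the stable envelope map \eqref{eq:4} is filtered: by property (1) of $\cL$, the component $\cL_\alpha=\cL|_{M\times F_\alpha}$ is supported on $\bigcup_{\beta\le\alpha}\overline{\Leaf_\beta}\times_{X^T}F_\alpha$, hence $\cL_\alpha\ast(-)$ lands in $H_{[-*]}^\TT(\cA_{M,\le\alpha})$. This gives the filtered structure, and it remains to identify the associated graded map.

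The key computation is that the associated graded of \eqref{eq:4} in degree $\alpha$ is (up to the sign from the polarization) the isomorphism $H^{[*]}_\TT(F_\alpha)\xrightarrow{\sim} H^\TT_{[-*]}(\Leaf_\alpha)$ given by $\gamma_\alpha\mapsto p_\alpha^*\gamma_\alpha$ followed by the identification $H^\TT_{[-*]}(\Leaf_\alpha)\cong H^\TT_{[-*]}(F_\alpha)$. Concretely: modulo $H_{[*]}^\TT(\cA_{M,<\alpha})$, the class $\cL_\alpha$ reduces to $\pm[\overline{\Leaf_\alpha}]$ (this is exactly the content extracted in the uniqueness proof: $Z_k^{(\alpha,\alpha)}=\overline{\Leaf_\alpha}$ with coefficient $\pm 1$, and all other components of $\cL_\alpha$ involve $\beta<\alpha$, hence lie in the lower piece of the filtration). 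Therefore $\cL_\alpha\ast\gamma_\alpha$ reduces modulo the lower filtration step to $\pm\, j_{\alpha*}p_\alpha^*\gamma_\alpha$, which under $H_{[-*]}^\TT(\cA_{M,\le\alpha})/H_{[-*]}^\TT(\cA_{M,<\alpha})\cong H_{[-*]}^\TT(\Leaf_\alpha)$ is $\pm$ the affine-bundle pullback isomorphism. So $\mathrm{gr}$ of \eqref{eq:4} is an isomorphism summand by summand.

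Finally I would conclude by a standard induction on the partial order, using the five lemma: one compares the short exact sequence $0\to F_{<\alpha}H^{[*]}_\TT(M^T)\to F_{\le\alpha}H^{[*]}_\TT(M^T)\to H^{[*]}_\TT(F_\alpha)\to 0$ (which is split, but that is irrelevant) with the Proposition~\propref{prop:filtration} sequence for $\cA_M$, the vertical maps being \eqref{eq:4} restricted to the filtration pieces; by induction the left vertical map is an isomorphism, by the previous paragraph the right vertical map is an isomorphism, hence so is the middle one. Taking $\alpha$ maximal (so $\cA_{M,\le\alpha}=\cA_M$) gives the result. The main obstacle I anticipate is the careful verification that the map \eqref{eq:4} genuinely respects the two filtrations in the precise degree-shifted ($H^{[*]}$ versus $H_{[-*]}$, $\TT$-equivariant) bookkeeping, and that the associated graded really is the bundle-pullback isomorphism and not merely injective — this rests on property (2) of $\cL$ (the diagonal restriction is $\pm e(L^-_\alpha)\cap[\Delta_{F_\alpha}]$, which is exactly the Euler class that makes $j_{\alpha*}p_\alpha^*$ an isomorphism after inverting nothing, by the self-intersection/excess-intersection formula for the affine bundle $\Leaf_\alpha\hookrightarrow M$ near $F_\alpha$) together with the odd-degree vanishing from \propref{prop:filtration}(1), which guarantees there are no extension problems obstructing the five-lemma step.
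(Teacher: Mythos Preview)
Your proposal is correct and follows essentially the same approach as the paper: filter both sides by the partial order via \propref{prop:filtration}, use property~(1) of $\cL$ to see the map is filtered, and identify the associated graded piece with the affine-bundle pullback isomorphism $H^{[*]}_\TT(F_\alpha)\xrightarrow{\sim} H_{[-*]}^\TT(\Leaf_\alpha)$. The paper's argument is more terse (it simply states that the induced map on graded pieces ``is the pull-back, and hence it is an isomorphism''), but your five-lemma induction is the natural way to make that final implication precise.
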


By \propref{prop:filtration}, $H^\TT_{[*]}(\cA_M)$ has a natural
filtration $\bigcup_{\alpha} H^\TT_{[*]}(\cA_{M,\le\alpha})$ whose
associated graded is $\bigoplus_\alpha
H^\TT_{[*]}(\Leaf_\alpha)$. From the construction, the stable envelope
is compatible with the filtration, where the filtration on
$H^{[*]}_\TT(M^T)$ is one induced from the decomposition to $\bigoplus
H^{[*]}_\TT(F_\alpha)$. Moreover the induced homomorphism
$H^{[*]}_\TT(F_\alpha) \to H_{[*]}^\TT(\Leaf_\alpha)$ is the
pull-back, and hence it is an isomorphism. Therefore the original stable envelope is also an isomorphism.

\subsection{Subtorus}

Let $T'$ be a subtorus of $T$ such that $M^{T'}$ is strictly larger
than $M^T$. We have a natural inclusion
\(
   \Hom_{\mathrm{grp}}(\CC^\times,T')\otimes_\ZZ \RR
   \subset
   \Hom_{\mathrm{grp}}(\CC^\times,T)\otimes_\ZZ \RR.
\)
Then a one parameter subgroup in $T'$ is contained in the union of
hyperplanes, which we removed when we have considered chambers. We take a chamber $\cC'$ for $T'$ and a chamber $\cC$ for $T$ such that
\begin{equation*}
    \cC'\subset \cC.
\end{equation*}
A typical example will appear in Figure~\ref{fig:YB}, where $T$ is
$2$-dimensional torus $\{ (t_1,t_2,t_3) \mid t_1t_2t_3 = 1\}$, and
$T'$ is $1$-dimensional given by, say, $t_1 = t_2$. For example, we
take $\cC$ as in the figure, and $\cC'$ as a half line in $a_1 = a_2$
from the origin to the right.

We consider $\Stab_{\cC'}\colon H^{[*]}_{\TT}(M^{T'})\to
H^{[*]}_{\TT}(M)$ defined by $\cC'$. We can also consider $M^T$ as the
fixed point locus in $M^{T'}$ with respect to the action of the
quotient group $T/T'$. Then the chamber $\cC$ projects a chamber for
$T/T'\curvearrowright M^{T'}$. Let us denote it by $\cC/\cC'$. It also
induces $\Stab_{\cC/\cC'}\colon H^{[*]}_{\TT}(M^{T})\to
H^{[*]}_{\TT}(M^{T'})$.
Here note that a polarization for $\cC$ induces ones for $\cC'$ and
$\cC/\cC'$ by considering of restriction of weights from
$\operatorname{Lie}T$ to $\operatorname{Lie}T'$. The uniqueness
implies

\begin{Proposition}\label{prop:factorStab}
    \begin{equation*}
        \Stab_{\cC'} \circ \Stab_{\cC/\cC'} = \Stab_{\cC}
        \colon H^{[*]}_{\TT}(M^{T})\to H^{[*]}_{\TT}(M).
    \end{equation*}
\end{Proposition}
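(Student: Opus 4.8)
The plan is to deduce the identity from the uniqueness part of \thmref{thm:stable-envel}. Both sides are operators $H^{[*]}_\TT(M^T)\to H^{[*]}_\TT(M)$ produced by the formula \eqref{eq:5} from homology classes: the right hand side from $\cL_\cC\in H^T_{[0]}(Z_\cA)$, and the left hand side is the composition of $\Stab_{\cC'}$ (convolution with $\cL'\in H^T_{[0]}(Z'_\cA)$, $Z'_\cA\subset M\times M^{T'}$) and $\Stab_{\cC/\cC'}$ (convolution with $\cL''\in H^T_{[0]}(Z''_\cA)$, where $Z''_\cA\subset M^{T'}\times M^T$ is the analogue for the $T/T'$-action on $M^{T'}$). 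First I would check that this composition is again a convolution operator, namely by $\cL'\ast\cL'' = p_{13*}(p_{12}^*\cL'\cap p_{23}^*\cL'')$ formed in $M\times M^{T'}\times M^T$; the only non-formal point is properness of $p_{13}$ on the relevant support $Z'_\cA\times_{M^{T'}}Z''_\cA$, which holds because the fibres of $Z'_\cA\to M$ are proper. So it suffices to prove that $\cL'\ast\cL''$ lies in $H^T_{[0]}(Z_\cA)$ and satisfies conditions (1), (2), (3)$'$ of the uniqueness statement; by uniqueness it then equals $\cL_\cC$, and the identity of operators follows.

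The geometric input is the transitivity of attracting sets for the inclusion $\cC'\subset\overline\cC$: for a one-parameter subgroup in $\cC'$ one attracts onto $M^{T'}$, and a generic perturbation into $\cC$ then attracts within $M^{T'}$ onto $M^T$ in the direction prescribed by $\cC/\cC'$. Concretely, with $p_{\cC'}\colon\cA^{(\cC')}_M\to M^{T'}$ the limit map, a point $x$ lies in $\cA^{(\cC)}_M$ iff $x\in\cA^{(\cC')}_M$ and $p_{\cC'}(x)$ lies in the $\cC/\cC'$-attracting set of $M^{T'}$, and under this description $\Leaf^{(\cC)}_\alpha = p_{\cC'}^{-1}(\Leaf^{(\cC/\cC')}_\alpha)$; moreover the order $\le_\cC$ refines $\le_{\cC'}$ and $\le_{\cC/\cC'}$ in the evident way. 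Hence $Z_\cA$ is set-theoretically the composite of correspondences $Z'_\cA\circ Z''_\cA$, so $\cL'\ast\cL''$ is supported on $Z_\cA$; since the composite operator preserves our degree (each factor does) the homogeneous class $\cL'\ast\cL''$ sits in degree $[0]$, and being supported on the lagrangian $Z_\cA$ (\propref{prop:irr}) it lies in $H^T_{[0]}(Z_\cA)$. Condition (1), that $(\cL'\ast\cL'')|_{M\times F_\alpha}$ is supported on $\bigcup_{\beta\le\alpha}\overline{\Leaf^{(\cC)}_\beta}\times_{X^T}F_\alpha$ for the order of $\cC$, then follows from the triangular support conditions for $\cL'$ and $\cL''$ together with this compatibility of orders.

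It remains to verify (2) and (3)$'$, and the main point here is the bookkeeping of normal weights. Fix $\alpha$, let $G_\gamma$ be the component of $M^{T'}$ containing $F_\alpha$, and split the normal directions of $F_\alpha$ in $M$ into those transverse to $M^{T'}$ and those inside $M^{T'}$; for a generic element of $\cC$ close to $\cC'$ the sign of the weight on the first group is governed by $\cC'$ and on the second by $\cC/\cC'$, whence $L^{(\cC),-}_\alpha = L^{\prime,-}_\gamma|_{F_\alpha}\oplus L^{(\cC/\cC'),-}_\alpha$ and, by the stated compatibility of polarizations, the chosen square roots multiply: $e(L^{(\cC),-}_\alpha) = e(L^{\prime,-}_\gamma)|_{F_\alpha}\cdot e(L^{(\cC/\cC'),-}_\alpha)$. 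For (2) I would restrict $\cL'\ast\cL''$ to $F_\alpha\times F_\alpha$: by property (2) for $\Stab_{\cC/\cC'}$ the relevant part of $\cL''$ near the diagonal is $\pm e(L^{(\cC/\cC'),-}_\alpha)\cap[\Delta_{F_\alpha}]$, which is supported at $F_\alpha\subset G_\gamma$, so only the $G_\gamma$-component of $\cL'$ contributes and property (2) for $\Stab_{\cC'}$ converts convolution with it into multiplication by $e(L^{\prime,-}_\gamma)|_{F_\alpha}$; the product is $\pm e(L^{(\cC),-}_\alpha)\cap[\Delta_{F_\alpha}]$, as wanted. For (3)$'$, the degree of $\iota_{\beta,\alpha}^*(\cL'\ast\cL'')$ is bounded by the sum of the corresponding degrees for $\cL'$ and $\cL''$, which by (3)$'$ for the two factors are at most $\frac12\codim_M G_{\gamma(\beta)}$ and $\frac12\codim_{M^{T'}}F_\beta$ (with $G_{\gamma(\beta)}$ the component of $M^{T'}$ containing $F_\beta$), with at least one inequality strict when $\beta<_\cC\alpha$; since $\codim_M F_\beta = \codim_M G_{\gamma(\beta)} + \codim_{M^{T'}}F_\beta$ this gives $\deg_T\iota_{\beta,\alpha}^*(\cL'\ast\cL'') < \frac12\codim_M F_\beta$. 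I expect the main obstacle to be making the transitivity of attracting sets, and the ensuing identification $Z_\cA = Z'_\cA\circ Z''_\cA$, precise enough to control the supports \emph{and} to run the localized computation in (2) with the correct signs, polarizations, and Euler classes; once that framework is set up the remaining verifications are formal.
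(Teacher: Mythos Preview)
Your approach is correct and is exactly the one the paper has in mind: the paper's entire proof is the phrase ``The uniqueness implies'' preceding the statement, so you have simply unpacked what that sentence means. The verification of (1), (2), (3)$'$ for the convolution $\cL'\ast\cL''$ via transitivity of attracting sets and multiplicativity of the Euler classes $e(L^-_\alpha)$ under the splitting $L^{(\cC),-}_\alpha = L'^{,-}_\gamma|_{F_\alpha}\oplus L^{(\cC/\cC'),-}_\alpha$ is the intended argument.
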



\section{Sheaf theoretic analysis of stable envelopes}

In this lecture, we will study stable envelopes in sheaf theoretic terms. In particular, we will connect them to hyperbolic restriction introduced in \cite{Braden}.

\subsection{Nearby cycle functor}\label{subsec:nearby-cycle-functor}

This subsection is a short detour, or a warm up, proving
$\pi_*(\cC_M)$ is perverse for a symplectic resolution $\pi\colon M\to
X$ without quoting Kaledin's semi-smallness result, mentioned in a
paragraph after \propref{prop:semismall}. It uses the nearby cycle
functor,\footnote{I learned usage of the nearby cycle functor for
  symplectic resolutions from Victor Ginzburg many years ago. He
  attributed it to Gaitsgory. See \cite{MR1826370}.} which is
important itself, and useful for sheaf theoretic understanding of
stable envelopes in \secref{sec:exactn-nearby-cycle}. Moreover we will
use the same idea to show that hyperbolic restriction functors
appearing in symplectic resolution preserve perversity. See
\subsecref{sec:exactn-nearby-cycle}.

Let us recall the definition of the nearby cycle functor in
\cite[\S8.6]{KaSha}.

Let $\scr X$ be a complex manifold and $f\colon \scr X\to \CC$ a
holomorphic function. Let $X = f^{-1}(0)$ and $r\colon X\to \scr X$ be
the inclusion. We take the universal covering $\tilde\CC^*\to\CC^*$ of
$\CC^* = \CC\setminus\{0\}$. Let $c\colon\tilde\CC^*\to\CC$ be the
composition of the projection and inclusion $\CC^*\to\CC$. We take the
fiber product $\tilde{\scr X}^*$ of $\scr X$ and $\tilde\CC^*$ over $\CC$. We
consider the diagram
\begin{equation*}
    \begin{CD}
        @. \tilde{\scr X}^* @>>> \tilde{\CC}^*\\
        @. @VV{\tilde c}V @VV{c}V \\
        X @>>r> \scr X @>>f> \CC.
    \end{CD}
\end{equation*}
Then the nearby cycle functor from $D^b(\scr X)$ to $D^b(X)$ is
defined by
\begin{equation}\label{eq:15}
    \psi_f(\bullet) = r^* \tilde c_* \tilde c^*(\bullet).
\end{equation}
Note that it depends only on the restriction of objects in $D^b(\scr X)$ to $\scr X\setminus X$.


We now take deformations $\scr X$, $\scr M$ used in the proof of
\thmref{thm:stable-envel}. They are defined over $\CC$, hence we have
projections $\scr X$, $\scr M\to \CC$. Let us denote them by $f_{X}$,
$f_{M}$ respectively.
(Singularities of $\scr X$ causes no trouble in the following
discussion: we replace $\scr X$ by an affine space to which $\scr X$
is embedded.)
We consider the nearby cycle functors $\psi_{f_{X}}$,
$\psi_{f_{M}}$. Since $\Pi\colon\scr M\to\scr X$ is proper, one
can check
\begin{equation}\label{eq:16}
    \psi_{f_{X}} \Pi_* = \pi_* \psi_{f_{M}}
\end{equation}
from the base change. 

Now we apply the both sides to the shifted constant sheaf $\cC_{\scr
  M}$. Since $\scr M\to\CC$ is a smooth fibration, i.e., there is a
homeomorphism $\scr M\to M\times \CC$ such that $f_{\scr M}$ is sent
to the projection $M\times\CC\to \CC$. Then we have
\begin{equation}
    \label{eq:12}
    \psi_{f_{M}}\cC_{\scr M} = \cC_M[1].
\end{equation}
On the other hand, since we only need the restriction of
$\Pi_*\cC_{\scr M}$ to $\scr X\setminus X$, we may replace
$\pi_*\cC_{\scr M}$ by $\cC_{\scr X}$ as $\scr M\setminus M\to \scr
X\setminus X$ is an isomorphism.  (In fact, $\Pi_*\cC_{\scr M} =
\IC(\scr X)$ as it is known that $\Pi$ is small.) We thus have
\begin{equation}\label{eq:14}
    \psi_{f_{X}}\cC_{\scr X} = \pi_*(\cC_M)[1].
\end{equation}
We have a fundamental property of the nearby cycle functor:
$\psi_f[-1]$ sends perverse sheaves on $\scr X$ to perverse sheaves on
$X$. (See \cite[Cor.~10.3.13]{KaSha}.) Hence $\pi_*(\cC_M)$ is perverse.

\begin{Exercise}
    (1) Check \eqref{eq:16}.

    (2) Check \eqref{eq:12}.
\end{Exercise}

\subsection{Homology group of the Steinberg type variety}

Recall that the variety $Z_\cA$ is defined as a fiber product
$\cA_M\times_{X^T}M^T$, analog of Steinberg variety. The homology
group of Steinberg variety, or more generally the fiber product
$M\times_{X} M$ nicely fits with framework of perverse sheaves by
Ginzburg's theory \cite[\S8.6]{CG}. A starting point of the
relationship is an algebra isomorphism (not necessarily grading
preserving)
\begin{equation}\label{eq:8}
   H_*(Z) \cong \Ext^*_{D^b(X)}(\pi_*(\cC_M), \pi_*(\cC_M)).
\end{equation}
Moreover, if $M\to X$ is semi-small, $\pi_*(\cC_M)$ is a semisimple
perverse sheaf, and 
\begin{equation}\label{eq:9}
   H_{[0]}(Z) \cong \Hom_{D^b(X)}(\pi_*(\cC_M), \pi_*(\cC_M)).
\end{equation}
See \cite[Prop.~8.9.6]{CG}.

We have the following analog for $Z_\cA$:
\begin{Proposition}[cf.\ \protect{\cite[Lemma~4]{tensor2}}]\label{prop:ZA}
    We have a natural isomorphism
    \begin{equation}\label{eq:10}
        H_*(Z_\cA) \cong \Ext^*_{D^b(X^T)}(\pi^T_*(\cC_{M^T}),
        p_* j^! \pi_*(\cC_M)).
    \end{equation}
\end{Proposition}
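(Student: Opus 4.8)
The plan is to follow Ginzburg's derivation of the isomorphism \eqref{eq:8} (see \cite[\S8.6]{CG}, and \cite[Lemma~4]{tensor2} for exactly this variant): push the $\Ext$-group over $X^T$ down to a $\Hom$-group over $Z_\cA$ by a chain of adjunctions and base-change isomorphisms in the derived category, and then recognise the latter as Borel--Moore homology because $M$ and $M^T$ are smooth. The only properness facts used are that $\pi\colon M\to X$, its restriction $\pi'\colon\cA_M\to\cA_X$, and $\pi^T\colon M^T\to X^T$ are proper; the contractions $p\colon\cA_X\to X^T$ and $p\colon\cA_M\to M^T$ are not proper, but they enter only through composites with these, and the base-change identity $f^!g_*\cong g'_*f'{}^!$ holds for an arbitrary Cartesian square.

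First I would simplify the second argument. Since $\cA_M=\pi^{-1}(\cA_X)$, the square with corners $\cA_M,M,\cA_X,X$ is Cartesian, so base change gives $j^!\pi_*(\cC_M)\cong\pi'_*\,j_M^!(\cC_M)$, where $j_M\colon\cA_M\hookrightarrow M$ is the inclusion; composing with $p_*$,
\[
    p_*\,j^!\,\pi_*(\cC_M)\;\cong\; q_*\,j_M^!(\cC_M),
\]
where $q\colon\cA_M\to X^T$ is the structure map appearing in the definition of $Z_\cA$ (recall $q=\pi^T\circ p=p\circ\pi'$). Next, for each $k$, the adjunction $(\pi^T_!,(\pi^T)^!)$ (valid as $\pi^T$ is proper), base change along the Cartesian square $Z_\cA=M^T\times_{X^T}\cA_M$ with projections $p_2\colon Z_\cA\to M^T$ and $r\colon Z_\cA\to\cA_M$, and the adjunction $(p_2^*,p_{2*})$ give successively
\[
\begin{aligned}
  \Ext^k_{D^b(X^T)}\!\bigl(\pi^T_*\cC_{M^T},\,q_*j_M^!\cC_M\bigr)
  &\cong \Hom_{D^b(M^T)}\!\bigl(\cC_{M^T},\,(\pi^T)^!q_*j_M^!\cC_M[k]\bigr)\\
  &\cong \Hom_{D^b(M^T)}\!\bigl(\cC_{M^T},\,p_{2*}\,r^!j_M^!\cC_M[k]\bigr)\\
  &\cong \Hom_{D^b(Z_\cA)}\!\bigl(p_2^*\cC_{M^T},\,p_1^!\cC_M[k]\bigr),
\end{aligned}
\]
where $p_1\colon Z_\cA\to M$ is the other projection and $r^!j_M^!=(j_Mr)^!=p_1^!$.

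Finally I would identify this last $\Hom$-group with $H_*(Z_\cA)$. As $M$ is smooth, $\cC_M=\omega_M[-\dim M]$, hence $p_1^!\cC_M\cong\omega_{Z_\cA}[-\dim M]$, a shift of the dualizing complex of $Z_\cA$ (which sees only the underlying space, so the scheme structure on $Z_\cA$ is irrelevant). Decomposing $M^T=\bigsqcup_\alpha F_\alpha$ into its smooth components, $p_2^*\cC_{M^T}$ is the shifted constant sheaf $\CC[\dim F_\alpha]$ on $p_2^{-1}(F_\alpha)$ and $0$ elsewhere, so the last group is $\bigoplus_\alpha H_{\dim M+\dim F_\alpha-k}\bigl(p_2^{-1}(F_\alpha)\bigr)$ in Borel--Moore homology; summing over $\alpha$ and $k$ yields $H_*(Z_\cA)$ up to a component-dependent degree shift. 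As with \eqref{eq:8}, this is why the isomorphism is only asserted graded up to regrading; and the whole argument goes through verbatim $\TT$-equivariantly, using the equivariant versions of the adjunctions and of base change.

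The step most prone to error is organisational rather than conceptual: keeping straight at each stage which functor is $f^*$ versus $f^!$ and $f_*$ versus $f_!$, checking that every square invoked is honestly Cartesian (with $\cA_M$ given the scheme structure $\pi^{-1}(\cA_X)$ and $Z_\cA$ the induced fibre-product structure), and that every adjunction $\Hom(f_!-,-)\cong\Hom(-,f^!-)$ is used only where $f_!=f_*$. No deeper input is needed; in particular the non-properness of the contraction maps is harmless, because on the source side they occur only inside the proper composite $q$, while on the target side $(\pi^T)^!q_*$ is resolved by base change along the square defining $Z_\cA$.
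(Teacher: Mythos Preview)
Your argument is correct and is precisely the intended one: the paper does not give its own proof but leaves \propref{prop:ZA} as an exercise, pointing to \cite[\S8.6]{CG} and \cite[Lemma~4]{tensor2}, and your chain of base change and adjunctions is exactly that computation. One small clarification worth making explicit: the set-theoretic map $p\colon\cA_M\to M^T$ is not a morphism (as the paper warns in \subsecref{subsec:leaves}), so you should be careful to phrase everything through the honest morphism $q=p\circ\pi'\colon\cA_M\to\cA_X\to X^T$; your proof already does this in practice, but the preamble suggests otherwise.
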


Recall that $H_*(Z_\cA)$ is an $(H^\TT_{[*]}(Z),
H^\TT_{[*]}(Z^T))$-bimodule. Similarly the right hand side of the above isomorphism is a bimodule over
\[
  (\Ext^*_{D^b(X^T)}(\pi^T_*(\cC_{M^T}),\pi^T_*(\cC_{M^T})),
  \Ext^*_{D^b(X)}(\pi_*(\cC_{M}),\pi_*(\cC_{M}))).
\]
Under \eqref{eq:8} and its analog for $M^T$, the above isomorphism
respects the bimodule structure.

In view of (\ref{eq:8},\ref{eq:9},\ref{eq:10}), it is natural to ask
what happens if we replace the left hand side of \eqref{eq:10} by the
shifted degree $0$ part $H_{[0]}(Z_\cA)$, where the cycle $\Stab_\cC =
\cL$ in \thmref{thm:stable-envel} lives. Since $\pi^T\colon M^T\to
X^T$ is semi-small, $\pi^T_*(\cC_{M^T})$ is a semisimple perverse
sheaf. As we shall see in the next two subsections, the same is true
for $p_* j^!\pi_*(\cC_M)$. Then we have
\begin{equation*}
    H_{[0]}(Z_\cA) \cong \Hom_{D^b(X^T)}(\pi_*^T(\cC_{M^T}),
    p_* j^! \pi_*(\cC_M)).
\end{equation*}

Now $\Stab_\cC$ can be regarded as the {\it canonical\/} homomorphism
from $\pi_*^T(\cC_{M^T})$ to $p_* j^! \pi_*(\cC_M))$. Since it has
upper triangular and invertible diagonal entries, it is an
isomorphism. (In fact, we will see it directly in
\subsecref{sec:exactn-nearby-cycle}.) Therefore $\Stab_\cC$ is the
{\it canonical\/} isomorphism. In particular, we have an algebra
homomorphism
\begin{multline}
    \label{eq:20}
    H^\TT_{[*]}(Z) = \Ext^*(\pi_*(\cC_M), \pi_*(\cC_M))
    \xrightarrow{p_*j^!} \Ext^*(p_*j^!\pi_*(\cC_M), p_*j^!\pi_*(\cC_M))
\\
    \xrightarrow{\cong} \Ext^*(\pi^T_*(\cC_{M^T}), \pi^T_*(\cC_{M^T}))
    = H^\TT_{[*]}(Z^T).
\end{multline}
This homomorphism can be regarded as a `coproduct', as it will be
clear in \secref{sec:r-matrix}.

\begin{Exercise}
    Give a proof of \propref{prop:ZA}.
\end{Exercise}

\subsection{Hyperbolic
  restriction}\label{subsec:hyperb-restr}

The claim that $p_* j^! \pi_*(\cC_M)$ is a semisimple perverse sheaf
is a consequence of two results:
\begin{aenume}
      \item Braden's result \cite{Braden} on preservation of purity.
    \item Dimension estimate of fibers, following an idea of
  Mirkovic-Vilonen \cite{MV2}.
\end{aenume}
One may compare these results with similar statements for proper
pushforward homomorphisms:
\begin{aenume}
      \item the decomposition theorem, which is a consequence of
    preservation of purity.
    \item Semi-smallness implies the preservation of perversity.
\end{aenume}
In fact, the actual dimension estimate (b) required for $p_*
j^!\pi_*(\cC_M)$ is rather easy to check, once we use the nearby cycle
functor. The argument can be compared with one in
\subsecref{subsec:nearby-cycle-functor}.

We first treat (a) in this subsection.

Let $\cR_{X}$ be the {\it repelling set\/}, i.e., the attracting set for
the opposite chamber $-\cC$. We have the diagram
\begin{equation*}
    X^T
    \overset{p_-}{\underset{i_-}{\leftrightarrows}} \cR_{X}
    \overset{j_-}{\rightarrow} {X},
\end{equation*}
as for the attracting set.

\begin{Theorem}[\cite{Braden,2013arXiv1308.3786D}]\label{thm:Braden}
    We have a natural isomorphism $p_* j^! \cong p_{-!} j_-^*$ on
    $T$-equivariant complexes $D^b_T({X})$.
\end{Theorem}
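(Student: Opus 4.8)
The plan is to follow the proof of Braden, reducing the statement to a one-dimensional model computation. Throughout write $k\colon X^T\hookrightarrow X$ for the inclusion and $s\colon X^T\hookrightarrow\cA_X$, $s_-\colon X^T\hookrightarrow\cR_X$ for the natural sections, so that $j\circ s=j_-\circ s_-=k$ and $p\circ s=p_-\circ s_-=\id$. First I would produce a canonical natural transformation $\beta\colon p_{-!}j_-^*\Rightarrow p_*j^!$, built from the units and counits of the adjunctions $(j^*,j_*)$ and $(p_!,p^!)$; the geometric input is that a point of $X$ admits a limit under $\rho(t)$ both as $t\to 0$ and as $t\to\infty$ precisely when it is $T$-fixed, so that $\cA_X$ and $\cR_X$ meet along $X^T$ and the square with vertices $X^T,\cA_X,\cR_X,X$ is Cartesian (after the reduction below, where $\cA_X$ is closed in $X$). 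Since all maps in sight are $T$-equivariant, it suffices to prove $\beta$ is an isomorphism in the $\rho(\CC^\times)$-equivariant (equivalently, monodromic) derived category, the $T$-equivariant statement then following formally.

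Next I would reduce to the case $X=V$, a linear representation of $\CC^\times$. The assertion is local on $X^T$, so one may take $X$ affine and $\CC^\times$-stable, hence closed-$\CC^\times$-equivariantly embedded by $\iota\colon X\hookrightarrow V$ in a representation $V$; here $\cA_X=X\cap\cA_V$, $\cR_X=X\cap\cR_V$, $X^T=X\cap V^T$, and $\iota_*=\iota_!$ commutes by base change with each of $p_*,j^!,p_{-!},j_-^*$, so the theorem for $V$ applied to $\iota_*\mathcal F$ yields it for $\mathcal F$ on $X$. For $X=V$ I would decompose $V=V^{>0}\oplus V^0\oplus V^{<0}$ into positive, zero and negative weight spaces, so that $\cA_V=V^{>0}\oplus V^0$, $\cR_V=V^0\oplus V^{<0}$ and $V^T=V^0$; factoring the four maps through the intermediate subspaces and commuting the ``positive'' and ``negative'' directions past one another by base change, the comparison reduces, one weight line at a time, to the rank-one models $\CC^\times$ acting on $\AA^1$ with weight $+1$, $-1$ and $0$. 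The weight-$0$ case is trivial, and in the weight-$\pm 1$ cases the required identity is exactly the \emph{contraction principle}: for a monodromic complex $\mathcal G$ on the weight-$1$ line $\AA^1$, the adjunction maps $R\Gamma(\AA^1,\mathcal G)\to\mathcal G_0$ and $i_0^!\mathcal G\to R\Gamma_c(\AA^1,\mathcal G)$ (with $i_0$ the inclusion of the origin) are isomorphisms, which one checks directly on the constant sheaf and on the $!$- and $*$-extensions of rank-one local systems from $\CC^\times$, these generating the monodromic category.

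The main obstacle is not any single computation --- the rank-one contraction lemma is elementary --- but the coherence of the reduction: one must verify that both hyperbolic restriction functors genuinely factor as the asserted composite of partial, fiberwise contractions in a manner compatible with the natural transformation $\beta$, so that checking $\beta$ is an isomorphism truly descends to the one-dimensional cases. I would organize this by proving the sharper statement that $\beta$ is an isomorphism of functors on the monodromic category by induction on $\dim V^{>0}+\dim V^{<0}$, peeling off one weight line at each step. (Alternatively one could run the more conceptual argument of Drinfeld--Gaitsgory, which avoids the explicit weight stratification by working with an auxiliary $\AA^1$-family, but the substance is the same.)
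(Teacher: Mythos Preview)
The paper does not give a proof of this theorem: it is stated with attribution to \cite{Braden} and \cite{2013arXiv1308.3786D} and used as a black box (the sentence following the statement already applies it to deduce preservation of purity). So there is no ``paper's own proof'' to compare against.

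Your sketch is a faithful outline of Braden's original argument from the first cited reference: construct the comparison morphism, reduce to a linear $\CC^\times$-representation by a closed equivariant embedding, split into weight lines, and invoke the contraction lemma on each. The parenthetical remark about the Drinfeld--Gaitsgory alternative corresponds to the second citation. One small point: when you say the Cartesian square holds ``after the reduction below, where $\cA_X$ is closed in $X$'', be aware that in the paper's setting $X$ is already assumed affine with linear $T$-action (see the paragraph after \eqref{eq:13}), so $\cA_X$ is already closed and the Cartesian property holds from the outset; the reduction to a linear space is still needed for the weight decomposition, but not for this. Otherwise the sketch is sound as a proof outline, with the usual caveat that the ``coherence of the reduction'' you flag is indeed where the real bookkeeping lies in Braden's paper.
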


This theorem implies the preservation of the purity for $p_* j^! =
p_{-!} j_-^*$ as $p_*$, $j^!$ increase weights while $p_{-!}$, $j^*$
decrease weights. In particular, semisimple complexes are sent to
semisimple ones. Thus $p_* j^! \pi_*(\cC_M)$ is semisimple.

\begin{Definition}
    The functor $p_* j^! = p_{-!}j_-^*$ is called the {\it hyperbolic
      restriction}.
\end{Definition}

Note that we have homomorphisms
\begin{equation}
    \label{eq:25}
    \begin{split}
        & H^*_T(X^T, p_* j^! \scF) \to H^*_T(X,\scF),
        \\
        & H^*_{T,c}(X,\scF) \to H^*_{T,c}(X^T, p_* j^!\scF)
    \end{split}
\end{equation}
for $\scF\in D^b_T(X)$ by adjunction. These become isomorphisms when
we take $\otimes_{H^*_T(\mathrm{pt})}
\operatorname{Frac}(H^*_T(\mathrm{pt}))$ by the localization theorem
in equivariant cohomology.

\subsection{Exactness by nearby cycle functors}\label{sec:exactn-nearby-cycle}

Consider one parameter deformation $f_{M}$, $f_{X}\colon \scr M$,
$\scr X\to \CC$ as in \subsecref{subsec:nearby-cycle-functor}. We have
the diagram
\begin{equation*}
    \scr X^T
    \overset{p_{\scr X}}{\underset{i_{\scr X}}{\leftrightarrows}} \cA_{\scr X}
    \overset{j_{\scr X}}{\rightarrow} \scr X,
\end{equation*}
as in \eqref{eq:13}. We have the hyperbolic restriction functor
$p_{\scr X*} j_{\scr X}^!$. 

We also have a family $f_{X^T}\colon \scr X^T\to \CC$.

The purpose of this subsection is to prove the following.

\begin{Proposition}\label{prop:nearby-hyperbolic}
    \textup{(1)} The restriction of $p_{\scr X*} j_{\scr X}^!\cC_{\scr
      X}$ to $\scr X^T\setminus X^T$ is canonically isomorphic to the
    constant sheaf $\cC_{\scr X^T\setminus X^T}$.

\textup{(2)} The nearby cycle functors commute with the hyperbolic
restriction:
\[
    p_* j^!\psi_{f_{X}} = \psi_{f_{X^T}}
    p_{\scr X*} j_{\scr X}^!.
\]
\end{Proposition}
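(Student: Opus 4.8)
The plan is to handle (1) by an explicit computation after reducing to the situation of \subsecref{subsec:case-when-pi}, and to deduce (2) from the general principle that the nearby cycle functor commutes with hyperbolic restriction. Together these give, on the constant sheaf, the clean consequence $p_*j^!\pi_*(\cC_M)\cong\pi^T_*(\cC_{M^T})$, which is exactly the perversity/semisimplicity statement for $p_*j^!\pi_*(\cC_M)$ invoked in \subsecref{subsec:hyperb-restr}.

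For (1): over $\CC^*=\CC\setminus\{0\}$ the morphism $\Pi\colon\scr M\to\scr X$ is proper and birational onto a smooth affine variety, hence an isomorphism there (as already used in \subsecref{subsec:nearby-cycle-functor}), so over $\CC^*$ everything is of the type treated in \subsecref{subsec:case-when-pi}. In particular, restricted over $\CC^*$, the attracting set $\cA_{\scr X}$ decomposes by connected components into smooth subvarieties $\Leaf_\alpha$, each closed in $\scr X\setminus X$ and carrying an affine-bundle projection $p_{\scr X}\colon\Leaf_\alpha\to F_\alpha\subset\scr X^T\setminus X^T$. Since the attracting and repelling parts of the symplectic normal bundle of $F_\alpha$ are dual, the rank of this affine bundle equals $\codim_{\scr X\setminus X}\Leaf_\alpha$; hence $j_{\scr X}^!\cC_{\scr X}$ restricted to $\Leaf_\alpha$ is $\cC_{\Leaf_\alpha}$ shifted down by that codimension, while pushing forward along $p_{\scr X}$ shifts up by the rank, and the two shifts cancel, giving $(p_{\scr X*}j_{\scr X}^!\cC_{\scr X})|_{F_\alpha}\cong\cC_{F_\alpha}$. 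Summing over $\alpha$ yields (1); passing to $\CC^*$ is harmless because $j^!$ and $p_*$ commute with $*$-restriction to the open set $\scr X\setminus X$, respectively $\scr X^T\setminus X^T$.

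For (2): I would prove more generally that, for any $T$-equivariant one-parameter deformation, $\psi$ commutes with the hyperbolic restriction $p_*j^!$. Apply \thmref{thm:Braden} to both $\scr X$ and $X$ to rewrite the hyperbolic restriction in the form $p_{-!}j_-^*$, using the repelling set for $-\cC$. Then one checks: (i) $\psi$ commutes with $*$-restriction along the closed embedding of the repelling set (the square relating $\cR_{\scr X}\hookrightarrow\scr X$ and $\cR_X\hookrightarrow X$ is Cartesian, and nearby cycles commute with $*$-restriction to a closed subvariety of the total space — equivalently, by duality, with $!$-restriction); (ii) $\psi$ commutes with the non-proper $!$-pushforward $\cR_{\scr X}\to\scr X^T$: factor this morphism as an open immersion followed by a proper morphism, and use that $\psi$ commutes with proper pushforward — already invoked for \eqref{eq:16} — and with $!$-extension along open immersions. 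Combining (i), (ii) with the two instances of the Braden isomorphism gives $\psi_{f_{X^T}}\,p_{\scr X*}j_{\scr X}^!\cong p_*j^!\,\psi_{f_X}$, i.e.\ (2). Finally, evaluating (2) on $\cC_{\scr X}$: by \eqref{eq:14} the left side is $p_*j^!(\pi_*(\cC_M)[1])$; for the right side, $\psi_{f_{X^T}}$ depends only on the restriction of its argument to $\scr X^T\setminus X^T$, which by (1) is the constant sheaf $\cC_{\scr X^T\setminus X^T}$ — the same restriction as that of $\Pi^T_*(\cC_{\scr M^T})$, since $\Pi^T\colon\scr M^T\to\scr X^T$ is an isomorphism over $\CC^*$ — so the right side equals $\psi_{f_{X^T}}(\Pi^T_*(\cC_{\scr M^T}))=\pi^T_*(\cC_{M^T})[1]$ by the argument of \eqref{eq:14} applied to $\Pi^T$. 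Hence $p_*j^!\pi_*(\cC_M)\cong\pi^T_*(\cC_{M^T})$, which is semisimple perverse because $\pi^T$ is semi-small, and tracing through identifies this isomorphism with $\Stab_\cC$, giving the direct proof promised in \subsecref{subsec:hyperb-restr} that the stable envelope is an isomorphism.

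I expect the main obstacle to lie in step (2): verifying that the natural isomorphism of \thmref{thm:Braden}, assembled from adjunction units and counits, is intertwined by the base-change isomorphisms for $\psi$ used in (i) and (ii) — a several-step compatibility diagram-chase of adjunctions. A secondary point is the bookkeeping of degree shifts, together with the standard but worth-stating claim that the topological trivialization $\scr M\cong M\times\CC$ can be chosen $T$-equivariantly, so that $\psi_{f_{M^T}}\cC_{\scr M^T}=\cC_{M^T}[1]$. One could try to avoid \thmref{thm:Braden} by proving (2) on $\cC_{\scr X}$ directly via base change along the proper maps $\pi$, $\pi^T$, $\pi|_{\cA_M}$, but this stumbles on the fact that the Bialynicki-Birula limit map is not a morphism on $M$ itself — precisely the reason the deformation, on which $\Pi$ becomes an isomorphism, was introduced.
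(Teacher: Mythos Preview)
Your treatment of (1) matches the paper's: over $\CC\setminus\{0\}$ the morphism $\Pi$ is an isomorphism onto a smooth affine variety, the attracting set decomposes into closed affine bundles over the fixed components, and the symplectic duality of attracting and repelling normal directions forces the relevant codimension and fiber dimension to coincide, so the shifts from $j^!$ and $p_*$ cancel.

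For (2) there is a real gap. Your steps (i) and (ii) rest on the assertion that the nearby cycle functor commutes with $*$-restriction along a closed embedding and with $!$-extension along an open immersion. Both fail in general. For (i), take $f(x,y)=x$ on $\CC^2$ and the closed curve $Z=\{x=y^2\}$: the stalk of $\psi_f(\cC_{\CC^2})$ at the origin has rank one (the family is trivial), whereas $\psi_{f|_Z}(\cC_Z)$ has rank two there (the Milnor fiber of $y\mapsto y^2$ is two points). For (ii), take $f(x,t)=t$ on $\CC^2$ and $U=\CC^2\setminus\{(0,0)\}$: the stalk of $\psi_f(u_!\cC_U)$ at the origin has rank one because the Milnor fiber $\{t=\epsilon\}$ misses the deleted point, while $u_{0!}\psi_{f|_U}(\cC_U)$ vanishes there. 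You correctly anticipate a difficulty in (2), but you locate it in the compatibility of Braden's isomorphism with your base changes; the actual obstruction sits one step earlier --- the base-change isomorphisms you invoke in (i) and (ii) do not exist.

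The paper avoids this by a different organization. Rather than keeping $\psi$ intact and pushing hyperbolic restriction past it, the paper unpacks $\psi=r^*\,\tilde c_*\,\tilde c^*$ and commutes each of the three constituent functors past hyperbolic restriction separately, \emph{choosing the Braden form adapted to each factor}. For the closed restriction $r^*$ to the special fiber it uses the repelling form $p_{-!}j_-^*$ together with proper base change; for the pushforward $\tilde c_*$ it switches back to the attracting form $p_*j^!$ and uses the dual base change; for the pullback $\tilde c^*$, which lives entirely over $\CC\setminus\{0\}$ where attracting sets are smooth closed subvarieties with vector-bundle projections, the $*$ and $!$ functors differ only by explicit shifts and one computes directly without a further appeal to \thmref{thm:Braden}. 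The point you are missing is precisely this alternation between the two forms of hyperbolic restriction.
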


\begin{Corollary}\label{cor:hyp-perverse}
    \textup{(1)} $p_* j^! \pi_* (\cC_M)$ is perverse.

    \textup{(2)} The isomorphism $\cC_{\scr X^T}|_{\scr X^T\setminus X^T}
    \xrightarrow{\cong}
    p_{\scr X*} j_{\scr X}^!\cC_{\scr
      X}|_{\scr X^T\setminus X^T}$ induces an isomorphism
    $\pi^T_*(\cC_{M^T})\xrightarrow{\cong} p_* j^! \pi_*(\cC_M)$.
\end{Corollary}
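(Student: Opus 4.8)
The plan is to assemble the Corollary directly from Proposition~\ref{prop:nearby-hyperbolic} together with the nearby-cycle identity \eqref{eq:14}. First I would apply the hyperbolic restriction functor $p_* j^!$ to \eqref{eq:14}, i.e.\ to the isomorphism $\psi_{f_X}\cC_{\scr X}\cong \pi_*(\cC_M)[1]$, and then rewrite using Proposition~\ref{prop:nearby-hyperbolic}(2):
\[
  (p_* j^!\,\pi_*(\cC_M))[1]\;\cong\;p_* j^!\,\psi_{f_X}\cC_{\scr X}\;=\;\psi_{f_{X^T}}\bigl(p_{\scr X*} j_{\scr X}^!\,\cC_{\scr X}\bigr).
\]
Since the nearby cycle functor $\psi_{f_{X^T}}$ depends only on the restriction of its argument to $\scr X^T\setminus X^T$, Proposition~\ref{prop:nearby-hyperbolic}(1) allows me to replace $p_{\scr X*} j_{\scr X}^!\cC_{\scr X}$ by $\cC_{\scr X^T}$ inside $\psi_{f_{X^T}}$, so that $(p_* j^!\,\pi_*(\cC_M))[1]\cong \psi_{f_{X^T}}\cC_{\scr X^T}$, naturally via the isomorphism of Proposition~\ref{prop:nearby-hyperbolic}(1).

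Next I would identify $\psi_{f_{X^T}}\cC_{\scr X^T}$ with $\pi^T_*(\cC_{M^T})[1]$ by rerunning the argument of \subsecref{subsec:nearby-cycle-functor} for the $T$-fixed loci. The $T$-equivariant deformation $\Pi\colon\scr M\to\scr X$ restricts to a deformation $\Pi^T\colon\scr M^T\to\scr X^T$ with $f_{M^T}\colon\scr M^T\to\CC$ a smooth (hence topologically trivial) fibration, $\Pi^T$ proper, and $\Pi^T$ an isomorphism over $\CC\setminus\{0\}$ (the isomorphisms $M^t\cong X^t$ for $t\neq 0$ are $T$-equivariant, so they induce isomorphisms on $T$-fixed points). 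Base change as in \eqref{eq:16} together with the analogues of \eqref{eq:12} and \eqref{eq:14} then yield $\psi_{f_{X^T}}\cC_{\scr X^T}=\psi_{f_{X^T}}\Pi^T_*\cC_{\scr M^T}=\pi^T_*\psi_{f_{M^T}}\cC_{\scr M^T}=\pi^T_*(\cC_{M^T})[1]$. Combining with the previous paragraph gives a natural isomorphism $p_* j^!\,\pi_*(\cC_M)\cong \pi^T_*(\cC_{M^T})$, induced by the isomorphism of Proposition~\ref{prop:nearby-hyperbolic}(1); since $\pi^T\colon M^T\to X^T$ is semi-small (as already used in the proof of \propref{prop:irr}), $\pi^T_*(\cC_{M^T})$ is a semisimple perverse sheaf, which proves (1), and the naturality statement is precisely (2).

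The argument is essentially bookkeeping once Proposition~\ref{prop:nearby-hyperbolic} is available, so there is no serious obstacle; the one point requiring care is the verification that \subsecref{subsec:nearby-cycle-functor} genuinely applies to $\Pi^T$ — in particular that $\scr M^T$ is smooth and that $f_{M^T}\colon\scr M^T\to\CC$ is topologically trivial with $\Pi^T$ an isomorphism over $\CC\setminus\{0\}$. For $\Gi{d}\to\Uh[r]{d}$ this is immediate from the explicit quiver deformation $[B_1,B_2]+IJ = t\,\id$ (whose $T$-fixed locus is again cut out by a perturbed quiver equation), and for $T^*(G/B)\to(\text{nilpotent variety})$ it follows from the deformation to semisimple adjoint orbits.
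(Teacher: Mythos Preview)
Your proof is correct and follows essentially the same route as the paper: apply Proposition~\ref{prop:nearby-hyperbolic}(2) to the identity \eqref{eq:14}, use Proposition~\ref{prop:nearby-hyperbolic}(1) to replace $p_{\scr X*}j_{\scr X}^!\cC_{\scr X}$ by $\cC_{\scr X^T}$ inside $\psi_{f_{X^T}}$, and then identify $\psi_{f_{X^T}}\cC_{\scr X^T}[-1]$ with $\pi^T_*(\cC_{M^T})$ via the analogue of \eqref{eq:14} for the $T$-fixed family. The only minor difference in presentation is that the paper deduces (1) directly from the $t$-exactness of $\psi_{f_{X^T}}[-1]$ (since $\cC_{\scr X^T\setminus X^T}$ is perverse on the smooth $\scr X^T\setminus X^T$) before proving (2), whereas you first establish the isomorphism (2) and then read off (1) from the semi-smallness of $\pi^T$; both orderings are fine, and your explicit justification of $\psi_{f_{X^T}}\cC_{\scr X^T}\cong\pi^T_*(\cC_{M^T})[1]$ fills in a step the paper leaves implicit.
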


In fact, $\pi_*(\cC_M) = \psi_{f_{X}}\cC_{\scr X}[-1]$ by
\eqref{eq:14}. Therefore \propref{prop:nearby-hyperbolic}(2) implies
$p_* j^!\pi_*(\cC_M) = \psi_{f_{X^T}} p_{\scr X*} j_{\scr X}^!
\cC_{\scr X}[-1]$. Now we can replace $p_{\scr X*} j_{\scr X}^!
\cC_{\scr X}[-1]$ by $\cC_{\scr X^T}[-1]$ by (1) over $\scr
X^T\setminus X^T$. Since $\psi_{f_{X^T}}[-1]$ sends a perverse sheaf
to a perverse sheaf, we get the assertion (1).

Applying $\psi_{f_{X^T}}$ to $\cC_{\scr X^T}|_{\scr X^T\setminus X^T}
    \xrightarrow{\cong}
    p_{\scr X*} j_{\scr X}^!\cC_{\scr
      X}|_{\scr X^T\setminus X^T}$, we get an isomorphism
\[
   \pi^T_*(\cC_{M^T}) \cong \psi_{f_{X^T}}\cC_{\scr X^T}
   \xrightarrow{\cong}
   \psi_{f_{X^T}} p_{\scr X*} j_{\scr X}^!\cC_{\scr X}
   \cong 
   p_* j^! \pi_*(\cC_M).
\]
This is (2).

Moreover, the isomorphism (2) coincides with one given by the class
$\cL\in H^T_{[0]}(Z_\cA) \cong \Hom_{D^b(X^T)}(\pi_*^T(\cC_{M^T}),
p_* j^! \pi_*(\cC_M))$. This is clear from the definition, and the
fact that the nearby cycle functor coincides with the specialization
of Borel-Moore homology groups.

We have canonical homomorphisms $\IC(X^T)\to \pi^T_*(\cC_{M^T})$ and
$\pi_*(\cC_M)\to\IC(X)$ as the inclusion and projection of direct
summands. Therefore we have a canonical homomorphism
\begin{equation*}
    \IC(X^T) \to p_* j^! \IC(X).
\end{equation*}
Note that $\IC(X^T)$, $p_* j^! \IC(X)$ make sense even when we do not
have a symplectic resolution. Therefore we can ask whether there is a
canonical homomorphism $\IC(X^T) \to p_* j^! \IC(X)$, though it is
not precise unless we clarify in what sense it is canonical.
In \subsecref{subsec:hyperb-restr-affine} and
\secref{lec:perv-sheav-uhlenb} below, we will construct canonical
homomorphisms.

\begin{History}
    A different but similar proof of \corref{cor:hyp-perverse}(1) was
    given by Varagnolo-Vasserot \cite{VV2} for hyperbolic restrictions
    for quiver varieties, and it works for symplectic resolutions. It
    uses hyperbolic semi-smallness for the case of symplectic
    resolution on one hand, and arguments in \cite[3.7]{MR792706} and
    \cite[4.7]{Lu-can2} on the other hand. The latter two originally gave
    decomposition of restrictions of character sheaves, restrictions
    of perverse sheaves corresponding to canonical bases respectively.
    (Semisimplicity follows from a general theorem
    \ref{thm:Braden}. But decomposition must be studied in a different
    way.)
\end{History}

\begin{proof}[Proof of \propref{prop:nearby-hyperbolic}]
    (1) The key is that $\scr M\setminus M$ is isomorphic to $\scr
    X\setminus X$. We have the decomposition $\scr M^T = \bigsqcup
    \scr F_\alpha$ corresponding to $M^T = \bigsqcup F_\alpha$ to
    connected components. Then we have the induced decomposition
    \begin{equation*}
        \cA_{\scr X} \setminus \cA_X 
        = \bigsqcup p_{\scr X}^{-1}(\scr F_\alpha\setminus F_\alpha)
    \end{equation*}
    to connected components, such that each $p_{\scr X}^{-1}(\scr
    F_\alpha\setminus F_\alpha)$ is an affine bundle over $\scr
    F_\alpha\setminus F_\alpha$. The same holds for $\cR_{\scr
      X}\setminus \cR_X = \bigsqcup p_{\scr X-}^{-1}(\scr F_\alpha\setminus
    F_\alpha)$.

    Moreover $p_{\scr X}^{-1}(\scr F_\alpha\setminus F_\alpha)$ is a
    smooth closed subvariety of $\scr X\setminus X$. Its codimension
    is equal to the dimension of fibers of $p_{\scr X-}^{-1}(\scr
    F_\alpha\setminus F_\alpha)$.

    Thus the hyperbolic restriction $p_{\scr X*}j_{\scr X}^!\cC_{\scr X}$ is the
    constant sheaf $\cC_{\scr X^T\setminus X^T}$ up to shifts. (Shifts
    could possibly different on components.)

    Now normal bundles of $\scr F_\alpha\setminus F_\alpha$ to
    $p_{\scr X}^{-1}(\scr F_\alpha\setminus F_\alpha)$ and $p_{\scr
      X-}^{-1}(\scr F_\alpha\setminus F_\alpha)$ are dual vector
    bundles with respect to the symplectic structure. In particular
    the dimension of fibers are equal. This observation implies that
    shifts are unnecessary, $p_{\scr X*}j_{\scr X}^!\cC_{\scr X}$ is
    equal to $\cC_{\scr X^T\setminus X^T}$. Moreover the isomorphism
    is given by the restriction of the constant sheaves and the Thom
    isomorphism for the cohomology group of an affine
    bundle. Therefore it is canonical.

    (2) Recall that the nearby cycle functor $\psi_{f_{X}}$ is
    given by $r_{X}^* \tilde c_{X*} \tilde c_{X}^*$ as in
    \eqref{eq:15}, where we put subscripts $X$ to names of maps to
    indicate we are considering the family $\scr X\to \CC$.

    We replace $p_* j^!$ by $p_{-!} j_-^*$ by \thmref{thm:Braden}. We
    have the commutative diagram
    \begin{equation*}
        \begin{CD}
            X^T @<p_-<< \cR_X @>j_->> X\\
            @V{r_{X^T}}VV @VV{r_{\cR_X}}V @VV{r_X}V \\
            \scr X^T @<<p_{\scr X-}< \cR_{\scr X} @>>j_{\scr X-}> \scr X,
        \end{CD}
    \end{equation*}
    where both squares are cartesian. Thus $p_{-!} j_-^* r^*_X =
    p_{-!} r_{\cR_X}^* j_{\scr X-}^* = r_{X^T}^* p_{\scr X-!} j_{\scr
      X-}^*$. Namely the hyperbolic restriction commutes with the
    restrictions $r^*_X$, $r^*_{X^T}$ to $0$-fibers.

    Next we replace back $p_{\scr X-!} j_{\scr X-}^*$ to $p_{\scr X*}
    j_{\scr X}^!$ and consider the diagram
    \begin{equation*}
        \begin{CD}
            \scr X^T @<p_{\scr X}<< \cA_{\scr X} @>j_{\scr X}>> \scr X \\
            @A{\tilde c_{X^T}}AA @AA{\tilde c_{\cA_X}}A @AA{\tilde c_X}A \\
            \tilde{\scr X}^{T*} @<<p_{\tilde{\scr X}^*}< \tilde{\cA}^*_{\scr X}
            @>>j_{\tilde{\scr X}^*}> 
            \tilde{\scr X}^*.
        \end{CD}
    \end{equation*}
    The bottom row is pull back to the universal cover $\tilde{\CC}^*$
    of the upper row. This is commutative and both squares are
    cartesian. Thus we have
\(
   p_{\scr X*} j_{\scr X}^! \tilde c_{X*} =
   p_{\scr X*} \tilde c_{\cA_X*}j_{\tilde{\scr X}^*}^! 
   = \tilde c_{X^T*} (p_{\tilde{\scr X}^*})_* j_{\tilde{\scr X}^*}^!.
\)
Thus the hyperbolic restriction commutes with the pushforward for
the coverings $\tilde c_X$, $\tilde c_{X^T}$.

Finally we commute the hyperbolic restriction with pullbacks for
$\tilde c_X$, $\tilde c_{X^T}$. We do not need to use
\thmref{thm:Braden} as we saw $j_{\tilde{\scr X}^*}$, $p_{\tilde{\scr
    X}^*}$ are (union of) embedding of smooth closed subvarieties and
projections of vector bundles. Therefore $*$ and $!$ are the same up
to shift.
(Also \thmref{thm:Braden} is proved for algebraic varieties, and is
not clear whether the proof works for $\tilde{\scr X}^*$ in
general. This problem disappears if we consider the nearby cycle
functor in algebraic context.) This finishes the proof of (2).
\end{proof}

\subsection{Hyperbolic semi-smallness}

Looking back the proof of \propref{prop:nearby-hyperbolic}, we see
that a key observation is the equalities $\rank \cA_X = \rank \cR_X =
\codim_X X^T/2$. (More precisely restriction to each component of $X^T$.)

In order to prove the exactness of the hyperbolic restriction functor
in more general situation, in particular, when we do not have
symplectic resolution, we will introduce the notion of hyperbolic
semi-smallness in this subsection.

The terminology is introduced in \cite{2014arXiv1406.2381B}, but the
concept itself has appeared in \cite{MV2} in the context of the
geometric Satake correspondence.

Let $X = \bigsqcup X_\alpha$ be a stratification of $X$ such that
$i_\alpha^! \IC(X)$, $i_\alpha^* \IC(X)$ are shifts of locally
constant sheaves.
Here $i_\alpha$ denotes the inclusion $X_\alpha\to X$.
We suppose that $X_0$ is the smooth locus of $X$ as a convention.
\begin{NB}
    Therefore $i_{0}^! \IC(X) = i_0^* \IC(X) = \cC_{X_0}
    = \CC_{X_0}[\dim X]$.
\end{NB}%

We also suppose that the fixed point set $X^T$ has a stratification
$X^T = \bigsqcup Y_\beta$ such that the restriction of $p$ to
$p^{-1}(Y_\beta)\cap X_\alpha$ is a topologically locally trivial
fibration over $Y_\beta$ for any $\alpha$, $\beta$ (if it is
nonempty). We assume the same is true for $p_-$. We take a point
$y_\beta\in Y_\beta$.

\begin{Definition}\label{def:hypsemismall}
    We say $(p,j)$ is {\it hyperbolic semi-small\/} if the following
    two estimates hold
    \begin{equation}\label{eq:est}
        \begin{split}
            & \dim p^{-1}(y_\beta)\cap X_\alpha\le \frac12 
            (\dim X_\alpha - \dim Y_\beta),
\\
            & \dim p_-^{-1}(y_\beta)\cap X_\alpha\le \frac12 
            (\dim X_\alpha - \dim Y_\beta).
        \end{split}
    \end{equation}
\end{Definition}

These conditions are for $p$ and $j$. Nevertheless we will often say
the functor $p_*j^!$ is hyperbolic semi-small for brevity. There
should be no confusion if we use $p_* j^!$ only for $\IC(X)$.

Suppose $X$ is smooth (and $X = X_0$). Then $X^T$ is also smooth. We
decompose $X^T = \bigsqcup Y_\beta$ into connected components as
usual. Then the above inequalities must be equalities, i.e., $\rank
\cA_X|_{Y_\beta} = \rank \cR_X|_{Y_\beta} = \codim_X Y_\beta/2$. They
are the condition which we have mentioned in the beginning of this
subsection.

\begin{NB}
    Let $i_{y_\beta}$ denote the inclusion of $y_\beta$ to
    $Y_\beta$. We consider components of $\Phi(\IC(X))$ involving
    local systems on $Y_\beta$. To compute the former, we study
    $H^{\dim Y_\beta}(i_{y_\beta}^!)$. We use the base change that the
    fiber is
    \begin{equation*}
        \begin{split}
            & H_{\dim X - \dim Y_\beta}(p^{-1}(y_\beta)\cap X_0) 
    = H^{-\dim X + \dim Y_\beta}(p^{-1}(y_\beta)\cap X_0, \mathbb D_{p^{-1}(y_\beta)\cap X_0})
\\
    =\; & H^{-\dim X + \dim Y_\beta}(
    p^{-1}(y_\beta)\cap X_0,
    \tilde j^! \mathbb D_{X_0})
\\
    =\; & H^{\dim Y_\beta}(
    p^{-1}(y_\beta)\cap X_0, \tilde j^! \cC_{X_0})
    = H^{\dim Y_\beta}(i_{y_\beta}^!, p_* j^! \cC_{X_0}),
        \end{split}
    \end{equation*}
where $\tilde j$ is the inclusion of $p^{-1}(y_\beta)\cap X_0$.
For the latter, we study $H^{-\dim Y_\beta}(i_{y_\beta}^*)$. We have
\begin{equation*}
    \begin{split}
    & H^{\dim X - \dim Y_\beta}_c(p_-^{-1}(y_\beta)\cap X_0)
    = H^{- \dim Y_\beta}_c(p_-^{-1}(y_\beta)\cap X_0, \tilde j_-^* \cC_{X_0})
\\
   =\; & H^{- \dim Y_\beta}(i^*_{y_\beta} (p_-)_! j_-^* \cC_{X_0}).
    \end{split}
\end{equation*}
\end{NB}%
Note that $p^{-1}(y_\beta)\cap X_0$ and $p_-^{-1}(y_\beta)\cap X_0$
are at most $(\dim X - \dim Y_\beta)/2$-dimensional if $\Phi$ is
hyperbolic semi-small. 
We consider homology and cohomology groups $H_{\dim X - \dim
  Y_\beta}(p^{-1}(y_\beta)\cap X_0)$ and $H^{\dim X - \dim
  Y_\beta}_c(p_-^{-1}(y_\beta)\cap X_0)$. They have bases given by
$(\dim X - \dim Y_\beta)/2$-dimensional irreducible components of
$p^{-1}(y_\beta)\cap X_0$ and $p_-^{-1}(y_\beta)\cap X_0$
respectively.
When $y_\beta$ moves in $Y_\beta$, they form local systems where
$\pi_1(Y_\beta)$ acts by permuting irreducible components.
Let us decompose them as $\bigoplus H_{\dim X - \dim
  Y_\beta}(p^{-1}(y_\beta)\cap X_0)_\chi\otimes\chi$ and $\bigoplus
H^{\dim X - \dim Y_\beta}_c(p_-^{-1}(y_\beta)\cap
X_0)_\chi\otimes\chi$, where $\chi$ is a simple local system $\chi$ on
$Y_\beta$.

\begin{Theorem}\label{thm:hypsemismall}
    Suppose $(p,j)$ is hyperbolic semi-small. Then $p_*j^!(\IC(X))$ is
    perverse and it is isomorphic to
    \begin{equation*}
        \bigoplus_{\beta,\chi} \IC(\overline{Y_\beta},\chi)\otimes
        H_{\dim X-\dim Y_\beta}(p^{-1}(y_\beta)\cap X_0)_\chi.
    \end{equation*}
Moreover, we have an isomorphism
\begin{equation*}
    H_{\dim X-\dim Y_\beta}(p^{-1}(y_\beta)\cap X_0)_\chi
    \cong
    H^{\dim X-\dim Y_\beta}_c(p_-^{-1}(y_\beta)\cap X_0)_\chi.
\end{equation*}
\end{Theorem}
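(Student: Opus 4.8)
The plan is to prove \thmref{thm:hypsemismall} by running, for the hyperbolic restriction functor in place of a proper pushforward, the same argument that turns semi-smallness into the decomposition theorem. I would split the statement into (i) $p_*j^!\IC(X)$ is perverse, and (ii) the identification of $p_*j^!\IC(X)$ with the asserted direct sum, including the two descriptions of the multiplicity spaces. Semisimplicity is already available: by \thmref{thm:Braden} the functor $p_*j^!\cong p_{-!}j_-^*$ is built from functors that only raise, resp.\ only lower, weights, so (as noted after that theorem) it carries the simple perverse sheaf $\IC(X)$ to a semisimple complex; once (i) shows this complex is perverse, it is automatically a semisimple perverse sheaf, and the work reduces to (i) and to reading off the multiplicities.

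For (i) the point is to turn the two inequalities of \eqref{eq:est} into the perverse support and cosupport bounds, using Braden's two presentations. For the support bound $p_*j^!\IC(X)\in{}^pD^{\le 0}$ I would use $p_{-!}j_-^*$: fix a stratum $Y_\beta\subset X^T$ and a point $y_\beta\in Y_\beta$, and compute the stalk $\mathcal H^k(i_{y_\beta}^*\,p_{-!}j_-^*\IC(X))$ by proper base change as the compactly supported hypercohomology of $(j_-^*\IC(X))|_{p_-^{-1}(y_\beta)}$. Filtering $p_-^{-1}(y_\beta)$ by its intersections with the $X_\alpha$ (where $j_-^*\IC(X)$ is $i_\alpha^*\IC(X)$, a shifted local system), the resulting spectral sequence has $E_1$-terms $H^*_c(p_-^{-1}(y_\beta)\cap X_\alpha,\ \mathcal H^m(i_\alpha^*\IC(X)))$, which vanish unless $*\le 2\dim(p_-^{-1}(y_\beta)\cap X_\alpha)$ and $m\le -\dim X_\alpha$ (the support condition of $\IC$); combined with the second inequality of \eqref{eq:est} this forces total degree $\le -\dim Y_\beta$, so $p_*j^!\IC(X)\in{}^pD^{\le 0}$. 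Symmetrically, computing the costalk $\mathcal H^k(i_{y_\beta}^!\,p_*j^!\IC(X))$ via the $p_*j^!$ presentation (using $i^!Rp_*\cong Rp'_*i'^!$) and bounding below with the first inequality of \eqref{eq:est} and the cosupport condition $\mathcal H^m(i_\alpha^!\IC(X))=0$ for $m<-\dim X_\alpha$, one gets vanishing for $k<\dim Y_\beta$, i.e.\ $p_*j^!\IC(X)\in{}^pD^{\ge 0}$. Hence $p_*j^!\IC(X)$ is perverse. (In the smooth case these are exactly the equalities $\rank\cA_X=\rank\cR_X=\codim_X X^T/2$ used via nearby cycles in \propref{prop:nearby-hyperbolic}.)

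Granting (i) and semisimplicity, write $p_*j^!\IC(X)\cong\bigoplus_{\beta,\chi}\IC(\overline{Y_\beta},\chi)\otimes V_{\beta,\chi}$. Restricting to $y_\beta$, the multiplicity local system is the extreme stalk $\mathcal H^{-\dim Y_\beta}(i_{y_\beta}^*\,p_*j^!\IC(X))$ --- the summands attached to strictly larger strata contribute nothing in this degree by the strict support condition of $\IC$ --- and the degree count of (i), run via $p_{-!}j_-^*$, identifies this with $H^{\dim X-\dim Y_\beta}_c(p_-^{-1}(y_\beta)\cap X_0)$: at the extreme total degree only the piece over the open stratum $X_0$ survives (for $X_\alpha$ with $\alpha\ne 0$ the factor $\mathcal H^{-\dim X_\alpha}(i_\alpha^*\IC(X))$ vanishes by strict support, and \eqref{eq:est} makes its fibre strictly smaller), and there the group is the top compactly supported cohomology of $p_-^{-1}(y_\beta)\cap X_0$, i.e.\ the span of its top-dimensional components. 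Taking $\chi$-isotypic parts gives the first formula. The dual computation --- the same multiplicity space is the extreme costalk $\mathcal H^{\dim Y_\beta}(i_{y_\beta}^!\,p_*j^!\IC(X))$, computed now via $p_*j^!$ and identified by the same mechanism with $H_{\dim X-\dim Y_\beta}(p^{-1}(y_\beta)\cap X_0)$ --- gives the second formula. Since both compute the single space $V_{\beta,\chi}$, comparing them yields the stated isomorphism.

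The main obstacle is the degree bookkeeping concentrated in (i) and in the extremal-degree identifications of (ii): one must check that the stratified spectral sequences behave as claimed, that only the open stratum survives in the extreme degree, and that the shifts in $j_-^*\IC(X)$ and $j^!\IC(X)$ combine with \eqref{eq:est} and the $\IC$ (co)support conditions to land exactly on the perverse $t$-bounds and on $H^{\dim X-\dim Y_\beta}_c$, resp.\ $H_{\dim X-\dim Y_\beta}$, of the fibres. This is precisely where the hypotheses on the stratifications of $X$ and $X^T$ and on $i_\alpha^*\IC(X)$, $i_\alpha^!\IC(X)$ are consumed; it is the hyperbolic analogue of the dimension estimate behind the decomposition theorem for semismall maps, and once it is carried out carefully the rest (purity, semisimplicity, the two multiplicity computations) is formal.
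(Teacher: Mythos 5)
Your argument is correct and is exactly the Mirkovi\'c--Vilonen-style computation that the paper itself invokes (the text says the proof is ``similar to one in \cite[Theorem~3.5]{MV2}'' and leaves the detail as an exercise, with the $X^T=\mathrm{pt}$ case worked out in \cite[Th.~A.7.1]{2014arXiv1406.2381B}): use the two Braden presentations $p_*j^!\cong p_{-!}j_-^*$ to get the support bound from $p_{-!}j_-^*$ and the cosupport bound from $p_*j^!$, with the stratified spectral sequences and \eqref{eq:est} doing the bookkeeping, then read off the multiplicities from the extremal stalk/costalk degrees where only the open stratum survives. Two cosmetic imprecisions worth tightening: for $\alpha\neq 0$ the support condition is the \emph{strict} $\mathcal H^m(i_\alpha^*\IC(X))=0$ for $m\ge-\dim X_\alpha$ (you write ``$m\le-\dim X_\alpha$'', which loses the strictness you then rely on), and in the extremal-degree step the non-contribution of $\alpha\neq 0$ already follows from this strictness alone---\eqref{eq:est} need not make the fibre strictly smaller, so the parenthetical ``and \eqref{eq:est} makes its fibre strictly smaller'' should be dropped or rephrased.
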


The proof is similar to one in \cite[Theorem~3.5]{MV2}, hence the
detail is left as an exercise for the reader. In fact, we only use the
case when $X^T$ is a point, and the argument in detail
for that case was given in \cite[Th.~A.7.1]{2014arXiv1406.2381B}.

\begin{Exercise}\label{ex:hyperb-example}
    Consider $X = \CC^2$ with the hyperbolic action $(x,y) \mapsto
    (tx, t^{-1}y)$ for $t\in\CC^\times_{\rm hyp}$. We choose a chamber
    $\RR_{>0}$. Check that $(S^d X, \CC^\times_{\rm hyp})$ is
    hyperbolic semi-small. For a partition $\lambda$ of $d$, we take
    an irreducible representation $\rho$ of $\Stab(\lambda)$.
    ($\Stab(\lambda)$ was introduced in
    \subsecref{sec:inters-cohom-group}.) We consider it as a local
    system on $S_\lambda(\CC^2)$, and take the associated $IC$ sheaf
    $\IC(\overline{S_\lambda(\CC^2)},\rho)$. Compute the hyperbolic
    restriction of $\IC(\overline{S_\lambda(\CC^2)},\rho)$.
\end{Exercise}

\begin{Exercise}
    Give a proof of \thmref{thm:hypsemismall}. Assume $X^T$ is a
    single point for brevity.
\end{Exercise}

\subsection{Hyperbolic restriction for affine Grassmannian}
\label{subsec:hyperb-restr-affine}

Recall the affine Grassmannian $\Gr_G = G((z))/G[[z]]$ is defined for
a complex reductive group $G$. Perverse sheaves on $\Gr_G$ are related
to finite dimensional representations of $G^\vee$, the Langlands dual
of $G$ by the geometric Satake correspondence.
Let us review how the hyperbolic restriction functor appears in
Mirkovic-Vilonen's work \cite{MV2} on the geometric Satake
correspondence. This topic as well as further topics on the geometric
Satake can be found in Zhu's lectures in the same volume. So we will
omit details.

Let $\Perv_G(\Gr_G)$ be the abelian category of $G[[z]]$-equivariant
perverse sheaves on $\Gr_G$. Let us consider the natural $T$-action on
$\Gr_G$, where $T$ is a maximal torus of $G$. Then it is well-known
that the fixed point set $\Gr_G^T$ is the affine Grassmannian of $T$,
which is nothing but the coweight lattice
$\Hom_{\mathrm{grp}}(\CC^\times,T)$.
Let us choose the coweight $2\rho^\vee\colon\CC^\times\to T$, the sum
of positive coroots. (This is a conventional choice, and is not
essential.)
Let $\nu$ be a coweight and let $z^\nu$ be the corresponding
$T$-fixed point in $\Gr_G$. Let $S_\nu$ be the corresponding
attracting set
\begin{equation*}
    S_\nu = \{ x\in\Gr_G \mid \lim_{t\to 0} 2\rho^\vee(t)x = z^\nu \}.
\end{equation*}
We consider the diagram
\begin{equation}\label{eq:21}
    \{ z^\nu\} 
    \overset{p_\nu}{\underset{i_\nu}{\leftrightarrows}} S_\nu
    \overset{j_\nu}{\rightarrow} \Gr_G
\end{equation}
as in \eqref{eq:13}, and the corresponding hyperbolic restriction $p_*
j^!$. Here we need to treat fixed points separately. In fact, we do
not have the projection $p$ on the union of $S_\nu$, considering
it as a closed subscheme of $\Gr_G$.
This problem has not occur, as we have only considered hyperbolic
restriction functors for affine varieties so far.

Let $\lambda$ be a dominant weight and $\Gr_G^\lambda$ be the
$G[[z]]$-orbit through $z^\lambda$. Let $\overline{\Gr}_G^\lambda$ be
its closure. It is the union of $\Gr_G^\mu$ over dominant weights
$\mu\le\lambda$. Then it was proved in \cite[Th.~3.2]{MV2} that the
restriction of \eqref{eq:21} to $\overline{\Gr}_G^\lambda$ is
hyperbolic semi-small. Therefore by \thmref{thm:hypsemismall},
$p_{\nu*}j_\nu^!  \IC(\overline{\Gr}_G^\lambda)$ is perverse, in other
words it is concentrated in degree $0$, as $z^\nu$ is a point. In
Mirkovic-Vilonen's formulation of geometric Satake correspondence, the
equivalence $\Perv_G(\Gr_G)\cong \operatorname{Rep} G^\vee$ is
designed so that we have an isomorphism
\begin{equation*}
    H^0(p_{\nu*}j_\nu^! \IC(\overline{\Gr}_G^\lambda))
    \cong V_\nu(\lambda),
\end{equation*}
where $V(\lambda)$ is the irreducible representation of $G^\vee$ with
highest weight $\lambda$, and $V_\nu(\lambda)$ is its weight $\nu$
subspace.  (See \cite[Cor.~7.4]{MV2}.) In other words, the
decomposition in \thmref{thm:hypsemismall} gives the weight space
decomposition.
Moreover the construction also gives a basis of $H^0(p_{\nu*}j_\nu^!
\IC(\overline{\Gr}_G^\lambda))$, and hence of $V_\nu(\lambda)$ given
by irreducible components of $S_\nu\cap \Gr_G^\lambda$.

\begin{Remark}\label{rem:duality}
    A resemblance between pushforward of (partial) resolution
    morphisms and hyperbolic restrictions is formal at this stage. But
    the following observation, given in \cite{Na-branching} in the
    context of geometric Satake correspondence for loop groups
    \cite{braverman-2007}, suggests a deeper relation is hidden. Also
    it is natural to conjecture that a similar relation exists in more
    general framework of the \emph{symplectic duality}
    \cite{2014arXiv1407.0964B} and Higgs/Coulomb branches of gauge
    theories \cite{2015arXiv150303676N}. In particular, the following
    observation is discussed in the framework of the symplectic
    duality in \cite[\S10.5]{2014arXiv1407.0964B}.

    It is known that the representation theory of $G$ is related to
    geometry in two ways. One is through $\Gr_G$ as we have just
    reviewed. Another is via quiver varieties of finite types. In
    these connections, partial resolution and hyperbolic restrictions
    appear in different places:

\begin{table*}
	\centering
    \begin{tabular}[h]{c|c|c}
        & hyperbolic restrictions & partial resolution \\
        \hline
        affine Grassmannian & weight spaces & tensor products \\
        quiver varieties & tensor products & weight spaces 
    \end{tabular}
\end{table*}

For $\Gr_G$, a hyperbolic restriction is used to realize weight spaces
as we have reviewed just above. For quiver varieties, weight spaces are
top homology groups of central fibers of affinization morphisms
$\pi\colon M\to X$ (\cite{Na-alg}). It means that weights spaces are
isotypical components of $\cC_{0}$ in $\pi_*(\cC)$. Here $\cC_0$ is
the constant sheaf at the point $0$.

On the other hand, a hyperbolic restriction appears for realization of
tensor products for quiver varieties. (We will see it for Gieseker
spaces in \subsecref{subsubsec:heisenberg-operators}.) For $\Gr_G$,
tensor products are realized by the convolution diagram (see
\cite[\S4]{MV2}). We will not review the construction, but we just
mention that tensor product multiplicities are understood as dimension
of isotypical components of $\IC(\overline{\Gr}^\lambda_G)$ in the
pushforward of a certain perverse sheaf under a semi-small morphism.

Affine Grassmannian (more precisely slices of $\Gr^\lambda_G$ in the
closure of another $G[[z]]$-orbit) and quiver varieties of finite
types are examples of Coulomb and Higgs branches of common gauge
theories (see \cite{2015arXiv150303676N}). It is natural to expect
that similar exchanges of two functors appear for pairs of \emph{dual}
symplectic varieties and Coulomb/Higgs branches. Toric hyper-K\"ahler manifolds provide us other examples \cite{BraMau}.

Let us consider a $3$-dimensional SUSY gauge theory associated with
$(H_c, \mathbf M)$ in general.
A maximal torus of the normalizer of $H_c$ in $\grpSp(\mathbf M)$
(flavor symmetry group) gives a torus action on the Higgs branch, and
(partial) resolution on the Coulomb branch
\cite[\S5]{2015arXiv150303676N}. On the other hand, the Pontryagin
dual of the fundamental group of the gauge group $H_c$ gives a torus
action on the Coulomb branch and (partial) resolution on the Higgs
branch \cite[\S4(iv)]{2015arXiv150303676N}. Therefore the exchange of
partial resolution and hyperbolic restriction are natural.
\end{Remark}

\section{$R$-matrix for Gieseker partial compactification}\label{sec:r-matrix}

In this lecture we introduce $R$-matrices of stable envelopes for symplectic
resolutions. Then we study them for Gieseker partial compactification, in particular relate them to the Virasoro algebra.

\subsection{Definition of $R$-matrix}

We consider the setting in \subsecref{subsec:setting}.

Let $\iota\colon M^T\to M$ be the inclusion. Then
$\iota^*\circ\Stab_\cC\in\End (H^{[*]}_\TT(M^T))$ is upper triangular,
and the diagonal entries are multiplication by
$e(L_\alpha^-)$. Since $e(L_\alpha^-)|_{H^*_{T}(\mathrm{pt})}$
is nonzero, $\iota^*\circ\Stab_\cC$ is invertible over
$\bF_T = \CC(\operatorname{Lie}\TT)$. By the localization theorem for equivariant cohomology groups, $\iota^*$ is also invertible. Therefore $\Stab_\cC$ is invertible over $\bF_T$.

\begin{Definition}
    Let $\cC_1$, $\cC_2$ be two chambers. We define the {\it $R$-matrix\/} by
    \begin{equation*}
        R_{\cC_1,\cC_2} = \Stab_{\cC_1}^{-1} \circ \Stab_{\cC_2}
        \in \End(H^{[*]}_\TT(M^T))\otimes\bF_T.
    \end{equation*}
\end{Definition}

\begin{Example}
    Consider $M = T^*\proj^1$ with $T = \CC^\times$, $\TT =
    \CC^\times\times\CC^\times$ as before. We denote the corresponding
    equivariant variables by $u$, $\hbar$ respectively. (So
    $H^*_T(\mathrm{pt}) = \CC[\operatorname{Lie}T] = \CC[u]$,
    $H^*_\TT(\mathrm{pt}) = \CC[\operatorname{Lie}\TT] =
    \CC[u,\hbar]$.) Choose a chamber $\cC = \{ u > 0\}$. Then
    $R_{-\cC,\cC}$ is the middle block of Yang's $R$-matrix
    \begin{equation*}
        R = 1 - \frac{\hbar P}u, \qquad
        P = \sum_{i,j=1}^2 e_{ij}\otimes e_{ji},
    \end{equation*}
    where $e_{ij}$ is the matrix element acting on $\CC^2$ up to
    normalization.
\end{Example}

\begin{Exercise}
    Check this example.
\end{Exercise}

It is customary to write the $R$-matrix as $R(u)$ to emphasize its
dependence on $u$.
This variable $u$ is called a {\it spectral parameter\/} in the
context of representation theory of Yangian.

The reason why it should be called the $R$-matrix is clear if we
consider the `coproduct' in \eqref{eq:20}. Let us denote it
$\Delta_\cC$ as it depends on a choice of a chamber $\cC$. If we take
two chambers $\cC_1$, $\cC_2$ as above, we have {\it two\/} coproducts
$\Delta_{\cC_1}$, $\Delta_{\cC_2}$. Then we can regard
$H^{[*]}_\TT(M^T)$ as an $H^\TT_{[*]}(Z)$-module in two ways, through $\Delta_{\cC_1}$ or $\Delta_{\cC_2}$. Let us distinguish two modules as
$H^{[*]}_\TT(M^T)_{\cC_1}$, $H^{[*]}_\TT(M^T)_{\cC_2}$. Then the $R$-matrix
\begin{equation*}
    R_{\cC_1,\cC_2}\colon H^{[*]}_\TT(M^T)_{\cC_2}\otimes \bF_T
    \to H^{[*]}_\TT(M^T)_{\cC_1}\otimes \bF_T
\end{equation*}
is an intertwiner.

Yang's $R$-matrix above originally appeared in a quantum many-body
problem. Subsequently it is understood that $R(u)$ is an intertwiner
between $V(a_1)\otimes V(a_2)$ and $V(a_2)\otimes V(a_1)$, where
$V(a_i)$ is a $2$-dimensional evaluation representation of the Yangian
$Y(\algsl_2)$ associated with $\algsl_2$. Conversely the Yangian
$Y(\algsl_2)$ can be constructed from the $R$-matrix by the so-called
RTT relation. See \cite{MR2355506}. Maulik-Okounkov \cite{MO} apply
this construction to the $R$-matrix for $\Gi{d}$, and more generally
for quiver varieties. For quiver varieties of type $ADE$, this
construction essentially recovers the usual Yangian $Y(\g)$ associated
with $\g$ \cite{MR3192991}. But it is a new Hopf algebra for quiver
varieties of other types.

We will not review the RTT construction, as the $\scr W$-algebra
associated with $\g$ (not of type $A$) is something different.

For the Yangian, we have two coproducts $\Delta$, $\Delta^{\text{op}}$
where the latter is given by exchanging two factors in tensor
products. The $R$-matrix is an intertwiner of two coproducts.

\subsection{Yang-Baxter equation}

Suppose that $T$ is two dimensional such that $\operatorname{Lie}T =
\{ a_1 + a_2 + a_3 = 0\}$. We suppose that there are six chambers
given by hyperplanes $a_1 = a_2$, $a_2 = a_3$, $a_3 = a_1$ as for Weyl
chambers for $\algsl(3)$. The cotangent bundle of the flag variety of
$\SL(3)$ and $\Gi[3]{d}$ are such examples by
Exercise~\ref{ex:chamber}.
We factorize the $R$-matrix from $\cC$ to $-\cC$ in two ways to get
the Yang-Baxter equation
\begin{multline*}
    R_{12}(a_1 - a_2) R_{13}(a_1 - a_3) R_{23}(a_2 - a_3)
   = R_{23}(a_2 - a_3) R_{13}(a_1 - a_3) R_{12}(a_1 - a_2).
\end{multline*}
See Figure~\ref{fig:YB}.

\begin{figure}[htbp]
    \centering
\begin{tikzpicture}
    \draw[thick] (-2,0) -- (2,0) node [right] {$a_1=a_2$};
    \draw[<-,dotted] (1.5,0) -- (2.5,-0.4) node [below right] {$\cC'$};
    \draw[thick] (240:2) -- (60:2) node [right] {$a_2=a_3$};
    \draw[thick] (120:2) -- (300:2) node [right] {$a_1=a_3$};
    \path (30:2) node {$\cC$};
    \path (330:2) node {$\cC_2$};
    \path (210:2.3) node {$-\cC$};
    \draw[thick,->] (45:1) arc (45:75:1) node [above] {$R_{23}$};
    \draw[thick,->] (105:1) arc (105:135:1) node [anchor=south east] {$R_{13}$};
    \draw[thick,->] (165:1) arc (165:195:1) node [anchor=east] {$R_{12}$};
    \draw[thick,->] (15:1) arc (15:-15:1) node [anchor=west] {$R_{12}$};
    \draw[thick,->] (315:1) arc (315:285:1) node [anchor=north] {$R_{13}$};
    \draw[thick,->] (255:1) arc (255:225:1) node [anchor=north east] {$R_{23}$};
\end{tikzpicture}
    \caption{Yang-Baxter equation}
    \label{fig:YB}
\end{figure}

Let $\cC$, $\cC_2$ be chambers as in Figure~\ref{fig:YB}. Let $\cC'$
be a half line separating $\cC$, $\cC_2$, considered as a chamber for
a subtorus $T'$ corresponding to $a_1 = a_2$. Then by
\propref{prop:factorStab} we have
\begin{equation}
    \label{eq:22}
    R_{12} = R_{\cC_2,\cC} = \Stab^{-1}_{\cC_2/\cC'} \circ \Stab_{\cC/\cC'}.
\end{equation}
Here $\Stab_{\cC_2/\cC'}$, $\Stab_{\cC/\cC'}$ are maps from
$H^{[*]}_\TT(M^T)$ to $H^{[*]}_\TT(M^{T'})$. Therefore the right
hand side is the $R$-matrix for the action of $T/T'$ on $M^{T'}$. For
the cotangent bundle of the flag variety of $\SL(3)$ and $\Gi[3]{d}$,
it is the $R$-matrix for $\SL(2)$ and $\Gi[2]{d}$ respectively.


\subsection{Heisenberg operators}\label{subsubsec:heisenberg-operators}

In the remainder of this lecture, we study the $R$-matrix for the case
$X = \Gi{d}$. But we first need to consider behavior of Heisenberg
operators under the stable envelope.

Let us observe that the Heisenberg operators $P^\Delta_{-m}(\alpha)$
sends $H^\TT_{[*]}(\cA_{\Gi{d}})$ to
$H^\TT_{[*-\deg\alpha]}(\cA_{\Gi{d+m}})$. This is because
$q_1(q_2^{-1}(\cA_{\Gi{d}}\times\CC^2))\subset
\cA_{\Gi{d+m}}$. Therefore the direct sum
\(
    \bigoplus_d H^\TT_{[*]}(\cA_{\Gi{d}}) 
\) 
is a module over the Heisenberg algebra. Recall we have isomorphisms
\begin{equation*}
    \bigoplus_d H^\TT_{[*]}(\cA_{\Gi{d}}) 
    \underset{\eqref{eq:4}}{\cong}
    \bigoplus_d H^{[*]}_\TT((\Gi{d})^T)
    \underset{\eqref{eq:31}}{\cong}
    \bigotimes_{i=1}^r \bigoplus_{d_i=0}^\infty H^{[*]}_\TT((\CC^2)^{[d_i]}).
\end{equation*}
Note that the rightmost space is the tensor product of $r$ copies of
the Fock space. Therefore it is a representation of the product of $r$
copies of the Heisenberg algebra.

We compare two Heisenberg algebra representations.

\begin{Proposition}[\protect{\cite[Th.~12.2.1]{MO}}]\label{prop:12.2.1}
    The operator $P^\Delta_{-m}(\alpha)$ is the diagonal Heisenberg
    operator
    \begin{equation*}
        \sum_{i=1}^r \id\otimes\dots\otimes\id\otimes
        \underbrace{P_{-m}(\alpha)}_{\text{\rm $i^{\mathrm{th}}$ factor}}
        \otimes\id\otimes\dots\otimes
        \id.
    \end{equation*}
\end{Proposition}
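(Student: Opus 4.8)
The plan is to transport $P^\Delta_{-m}(\alpha)$ through the stable--envelope isomorphism \eqref{eq:4} and \eqref{eq:31} and to identify the resulting operator $\widehat P_{-m}(\alpha)$ on $\bigotimes_{i=1}^r\bigoplus_{d_i}H^{[*]}_\TT((\CC^2)^{[d_i]})$ by equivariant localization. First I would reduce to generic equivariant parameters: both $\widehat P_{-m}(\alpha)$ and the claimed diagonal operator are $\bA_T$--linear operators on a \emph{free} $\bA_T$--module (freeness holds because each $(\CC^2)^{[d_i]}$ has no odd cohomology, so that $\bigoplus_{d_i}H^{[*]}_\TT((\CC^2)^{[d_i]})$ is free over $\bA_T$, hence so is the tensor product; cf.\ the discussion after \eqref{eq:1}). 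Therefore it suffices to prove the identity after $\otimes_{\bA_T}\bF_T$, where the localization theorem in equivariant cohomology is available.

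Over $\bF_T$ I would compute the matrix coefficients of $\widehat P_{-m}(\alpha)$ in the fixed--point basis by applying equivariant localization to the Lagrangian correspondence $P_m\subset\Gi{d+m}\times\Gi{d}\times\CC^2$. A $T$--fixed point of $P_m$ is a triple $(E_1\subset E_2,x)$ in which $E_2=\bigoplus_{i=1}^r E_{2,i}$ and $E_1=\bigoplus_{i=1}^r E_{1,i}$ are $T$--fixed (so each summand is an ideal sheaf), $E_{1,i}\subset E_{2,i}$ has colength $n_i$, and every quotient $E_{2,i}/E_{1,i}$ is supported at the single point $x$; thus the $T$--fixed components of $P_m$ are indexed by the vectors $\vec n=(n_1,\dots,n_r)$ with $\sum_i n_i=m$. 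The component attached to $\vec n$ is, factor by factor, a product of the punctual rank--one nested--Hilbert correspondences used to build the $i$--th Heisenberg algebra (and the diagonal on the factors with $n_i=0$), all constrained to lie over the same point $x$. When $\vec n=m\,e_i$ this component is precisely the rank--one correspondence defining $P_{-m}(\alpha)$ on the $i$--th factor together with the identity on the remaining factors, and summing its localization contribution over $i=1,\dots,r$ produces exactly the diagonal Heisenberg operator of the statement. Matching the Euler--class normalizations here is the routine part: it amounts to comparing the normal bundle of this component inside $P_m$ with the normal bundles of the relevant fixed loci inside $\Gi{d+m}$ and $\Gi{d}$, which is exactly the bookkeeping the polarization in \thmref{thm:stable-envel} is designed to make uniform.

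The crux, which I expect to be the main obstacle, is to show that every $\vec n$ supported on two or more indices contributes $0$. Here I would argue by a dimension count: distributing $m$ units of colength among $k\ge 2$ rank--one summands at a single point gives a punctual fibre of dimension strictly smaller than that of a single chain of colength $(m,0)$ --- already for $r=2$, $m=2$ the single--chain fibre is the $\proj^1$ of tangent directions while two forced colength--one ideals give a point. Hence the fixed component of $P_m$ attached to a non--elementary $\vec n$ has dimension strictly below that of the components with $\vec n=m\,e_i$, so its localization contribution lies in a cohomological degree too high to match $\deg\alpha$; since $P^\Delta_{-m}(\alpha)$ shifts the $[*]$--degree by exactly $\deg\alpha$, that contribution must vanish. (Equivalently this can be organized through \propref{prop:filtration}: $P^\Delta_{-m}(\alpha)$ preserves $\bigoplus_d H^\TT_{[*]}(\cA_{\Gi{d}})$ together with the leaf filtration, the stable envelope is strictly filtered with associated graded the affine--bundle pull--backs $H^{[*]}_\TT(F_\alpha)\to H^\TT_{[*]}(\Leaf_\alpha)$, the leading term of $\widehat P_{-m}(\alpha)$ is the single--factor operator, and the degree constraint rules out any lower--order correction.) Once this vanishing is in hand, the remaining comparison of signs, degrees and the $\alpha$--dependence with the rank--one construction is routine, and \propref{prop:12.2.1} follows.
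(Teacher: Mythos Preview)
The paper does not supply a proof --- the proposition is immediately followed by an exercise asking the reader to prove it --- so there is no explicit argument to compare against. Your parenthetical approach via \propref{prop:filtration} is the correct one and is essentially how \cite[Th.~12.2.1]{MO} proceeds: the Lagrangian correspondence $P_m$ preserves attracting sets and hence the leaf filtration; on the associated graded the induced operator is computed by the Lagrangian components of $(P_m)^T$, which your (correct) dimension count identifies as exactly the components with $\vec n = m\,e_i$; and the absence of lower--order corrections follows from the uniqueness half of \thmref{thm:stable-envel}, since $[P_m]$ is the fundamental class of a Lagrangian cycle (hence has $T$--degree $0$), so the convolution $[P_m]\ast\cL$ inherits the degree bound (3)$'$.

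Your primary route via equivariant localization, by contrast, has a real gap. Localizing $P_m$ to its $T$--fixed locus computes the conjugate of $P^\Delta_{-m}(\alpha)$ by the fixed--point inclusion, not by the stable envelope; the two bases differ by the upper--triangular change $\iota^*\circ\Stab_\cC$, and you give no argument that the diagonal operator commutes with this change. More seriously, the degree argument you invoke to kill the $k\geq 2$ contributions is unavailable after $\otimes_{\bA_T}\bF_T$: once homogeneous elements of $\bA_T$ are inverted, a cohomological--degree constraint no longer forces anything to vanish, and in fact the sub--Lagrangian fixed components \emph{do} contribute to the $\iota^*$--conjugate over $\bF_T$, with Euler classes in the denominators. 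The two routes you describe as equivalent are therefore not; you should work integrally via the filtration and make the phrase ``the degree constraint rules out any lower--order correction'' precise as an appeal to the uniqueness in \thmref{thm:stable-envel}.
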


Thus $P^\Delta_{-m}(\alpha)$ is a \emph{primitive} element with
respect to the coproduct $\Delta$.

\begin{Exercise}
    Give a proof of the above proposition.
\end{Exercise}

\subsection{\texorpdfstring{$R$}{R}-matrix as a Virasoro intertwiner}
\label{subsec:R-matrix-as-a-Virasoro}

Consider the $R$-matrix for $\Gi{d}$.  By \eqref{eq:22}, it is enough to consider the $r=2$ case. 

We mostly consider the localization
\[
H^{[*]}_\TT((\Gi[2]{d})^T)\otimes_{\bA_T}\bF_T,
\]
from now. It means that we consider the localization at the
\emph{generic point} in $\Spec\bA_T = \Spec \CC[\vec{a}, \ve_1, \ve_2]$.
By \eqref{eq:31} 
$\bigoplus H^{[*]}_\TT((\Gi[2]{d})^T)$ is isomorphic to the tensor product of two copies of Fock space.
Let us denote the Heisenberg generator for the first and second
factors by $P_m^{(1)}$, $P_m^{(2)}$ respectively. Here we take $1\in
H^{[*]}_\TT(\CC^2)$ for the cohomology class $\alpha$ in
\subsecref{subsec:heis-algebra-via}, and omit $(1)$ from the notation.
Note $P_n^{(1)}$, $P_n^{(2)}$ are well-defined operators on
$H^{[*]}_\TT((\Gi[2]{d})^T)$ only for $n < 0$ (a creation
operator). To make sense it also for $n > 0$, we need to consider the
localized equivariant cohomology group as above.
Note also that we have $\langle 1,1\rangle = - 1/\ve_1\ve_2$ in the
commutation relation. This does not make sense unless $\ve_1$, $\ve_2$
are invertible, i.e., in $\CC(\ve_1,\ve_2)$. We will discuss the
integral form in \secref{sec:W}.

We have $P^\Delta_m = P^{(1)}_m + P^{(2)}_m$ by \propref{prop:12.2.1}.
Since $P^\Delta_m$ is defined by a correspondence which makes sense
without going to fixed points $(\Gi[2]{d})^T$, it commutes with the
$R$-matrix.
Therefore the $R$-matrix should be described by anti-diagonal Heisenberg generators $P^{(1)}_m - P^{(2)}_m$. Let us denote them by $P^-_m$.

Let us denote the corresponding Fock spaces by $F^\Delta$ and
$F^-$. Therefore we have $\bigoplus H^{[*]}_\TT((\Gi[2]{d})^T) \cong
F^\Delta\otimes F^-$. The above observation means that the $R$-matrix
is of a form $\id_{F^\Delta}\otimes R'$ for some operator $R'$ on
$F^-$.

We now characterize $R'$ in terms of the Virasoro algebra, acting on $F^-$ by the well-known Feigin-Fuchs construction. 

Let us recall the Feigin-Fuchs construction.
We put
\begin{equation*}
   P^-_0 \defeq \frac1{\ve_1\ve_2} \left( a_1 - a_2 - (\ve_1+\ve_2) \right).
\end{equation*}
This element is central, i.e., it commutes with all other $P^-_m$. In particular, the Heisenberg relation \eqref{eq:Heis} remains true.

We then define
\begin{equation}\label{eq:18}
  L_n \defeq - \frac{\ve_1\ve_2}4 \sum_m \normal{P^-_m P^-_{n-m}}
   - \frac{n+1}2 ({\ve_1+\ve_2}) 
   P^-_n.
\end{equation}
These $L_n$ satisfy the Virasoro relation
\begin{equation}\label{eq:28}
   [L_m, L_n] = (m-n) L_{m+n} + \left(1 + \frac{6(\ve_1+\ve_2)^2}{\ve_1\ve_2}\right)
   	\delta_{m,-n}\frac{m^3-m}{12}
\end{equation}
with the central charge $1 + {6(\ve_1+\ve_2)^2}/{\ve_1\ve_2}$.

The vacuum vector $|\mathrm{vac}\rangle = 1_{\Gi[2]{0}}\in
H^{[*]}_\TT(\Gi[2]{d})$ is a {\it highest weight vector\/}, it is
killed by $L_n$ ($n>0$) and satisfies
\begin{equation}\label{eq:29}
   L_0 |\mathrm{vac}\rangle = -\frac14 \left(\frac{(a_1-a_2)^2}{\ve_1\ve_2} - 
   	\frac{(\ve_1+\ve_2)^2}{\ve_1\ve_2} \right)|\mathrm{vac}\rangle.
\end{equation}

Here we have used the normal ordering $\normal{\ }$, which is defined
by moving all annihilation operators to the right. See
\cite[Def.~9.34]{Lecture} for more detail.

It is known that the Fock space, as a representation of the Virasoro algebra, irreducible if its highest weight is generic. Moreover its isomorphism class is determined by its highest weight (and the central charge).

Looking at the above formula for the highest weight, we see that it is
unchanged under the exchange $a_1\leftrightarrow a_2$. Therefore there
exists the unique automorphism on $F^-$ (over $\bF_T$) sending
$|\mathrm{vac}\rangle$ to itself, and intertwining $L_n$ defined for
$(a_1,a_2)$ with $L_n$ for $(a_2,a_1)$. It is called the {\it
  reflection operator}.

Now a fundamental observation due to Maulik-Okounkov is the following result:

\begin{Theorem}[\protect{\cite[Th.~14.3.1]{MO}}]\label{thm:reflection}
 The $R$-matrix is $\id_{F^\Delta}\otimes(\text{\rm reflection operator})$.
\end{Theorem}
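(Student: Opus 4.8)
The reflection operator on $F^-$ is characterized by two properties: it fixes $|\mathrm{vac}\rangle$, and it intertwines the Virasoro generators $L_n$ obtained from \eqref{eq:18} with momentum $P^-_0\propto a_1-a_2$ with those built from $a_2-a_1$; moreover such an operator is \emph{unique}, since a Fock module over the Virasoro algebra is irreducible for generic highest weight and its isomorphism type depends only on that highest weight, which by \eqref{eq:29} is symmetric in $a_1$ and $a_2$. So the plan is to show that $R'=R_{-\cC,\cC}|_{F^-}$ fixes the vacuum and intertwines the two Feigin-Fuchs realizations, after which uniqueness identifies it with the reflection operator.

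First, \propref{prop:12.2.1} says $P^\Delta_m=P^{(1)}_m+P^{(2)}_m$ is diagonal, hence primitive, hence commutes with the $R$-matrix; since $F^\Delta$ is irreducible over the (doubled) Heisenberg algebra (the central element is $\langle1,1\rangle=-1/\ve_1\ve_2\neq0$ over $\bF_T$), this forces $R_{-\cC,\cC}=\id_{F^\Delta}\otimes R'$. Next, the stable envelope, hence $R_{-\cC,\cC}$, respects the grading by $d=d_1+d_2$; in degree $d=0$ both $\Gi[2]{0}$ and its fixed point locus reduce to a point, so $\Stab_{\pm\cC}$ is multiplication by the polarization sign $\pm1$, and with the polarization chosen consistently $R_{-\cC,\cC}$ is the identity in degree $0$, i.e.\ $R'|\mathrm{vac}\rangle=|\mathrm{vac}\rangle$.

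The crux is the intertwining property, for which I would bring in the geometric $\scW(\gl(2))=\Vir\otimes\Heis$-action on $\bigoplus_d H^{[*]}_\TT(\Gi[2]{d})\otimes\bF_\TT$, built from the correspondences $P_n$ and from multiplication by Chern classes of the tautological bundle $\cV$. These operators live on the total spaces and do not reference fixed points, and — as for $P^\Delta_m$, using the projection formula together with the fact that $\cV$ restricted to a leaf $\Leaf_\beta$ is pulled back from $F_\beta$ in cohomology — their conjugates by $\Stab_\cC$ and by $\Stab_{-\cC}$ are the \emph{same} operators on $\bigoplus_d H^{[*]}_\TT((\Gi[2]{d})^T)\otimes\bF_\TT$; hence $R_{-\cC,\cC}$ commutes with the transported $\scW(\gl(2))$-action. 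Transporting through $\Stab_\cC$ one computes (the $r=2$ case of the Schiffmann-Vasserot/Maulik-Okounkov theorem) that $\Heis$ becomes the diagonal Heisenberg $P^\Delta_m$ on $F^\Delta$ and $\Vir$ becomes the Feigin-Fuchs operators \eqref{eq:18} on $F^-$ with $P^-_0$ built from $a_1-a_2$; the ingredients are the explicit form of $\iota^*\circ\Stab_\cC$, the description of $\cV$ on $(\Gi[2]{d})^T$ as the sum of the Hilbert-scheme tautological bundles on the two factors twisted by the $T$-weights $a_1$, $a_2$, and \propref{prop:12.2.1}. Since $-\cC$ is the image of $\cC$ under the Weyl reflection of $\SL(2)$ — which swaps the two Hilbert-scheme factors (so $P^-_m\mapsto-P^-_m$) and $a_1\leftrightarrow a_2$, and whose compatibility with the stable envelope follows from the uniqueness in \thmref{thm:stable-envel} — the same computation through $\Stab_{-\cC}$ yields the Feigin-Fuchs operators with momentum built from $a_2-a_1$. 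Thus $R'$ intertwines the $(a_1,a_2)$- and $(a_2,a_1)$-Feigin-Fuchs realizations while fixing the vacuum, and uniqueness identifies $R'$ with the reflection operator; together with the first step this proves the theorem.

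\textbf{Main obstacle.} The delicate step is this explicit matching: computing the fixed-point avatar of the correspondence operators $P_n$ and of the tautological-class multiplications via the stable envelope and recognizing the Feigin-Fuchs formula \eqref{eq:18} exactly — in particular pinning down the dependence on $a_1-a_2$, so that the chamber flip produces a genuine reflection rather than the identity or a mere sign, and keeping straight the interplay of the Weyl conjugation, the Heisenberg automorphism $P^-_m\mapsto-P^-_m$, and the polarization signs. This needs the explicit stable envelope for $\Gi[2]{d}$ together with careful normalization of $P_n$ and of the Chern classes of $\cV$.
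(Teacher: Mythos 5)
Your overall strategy — split off the diagonal Heisenberg, check the vacuum, identify the conjugated Virasoro with the Feigin--Fuchs realization, and appeal to irreducibility/uniqueness — does match the paper's outline (which proceeds via the classical $r$-matrix, Lehn's formula, and \thmref{thm:14.2.3}). The steps involving $P^\Delta_m$, the degree-$0$ normalization of $\Stab_{\pm\cC}$, and the final uniqueness argument are all fine and agree with the source.

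The gap is in the middle: you assert that the conjugates of $c_1(\cV)\cup\bullet$ by $\Stab_\cC$ and by $\Stab_{-\cC}$ are the \emph{same} operator on $\bigoplus_d H^{[*]}_\TT((\Gi[2]{d})^T)\otimes\bF_\TT$, ``as for $P^\Delta_m$.'' This is false, and if it were true your proof would collapse to the wrong conclusion $R'=\id$: you would have $R'$ commuting with the Virasoro action $L_n = \tfrac1n[\Delta c_1(\cV),P^\Delta_n]|_{F^-_\loc}$ on $F^-$, and since $F^-$ is Virasoro-irreducible for generic parameters, $R'$ would then be a scalar, and the vacuum normalization would make it $1$. (You even contradict yourself two sentences later by saying the two transports yield the $(a_1,a_2)$- and $(a_2,a_1)$-Feigin--Fuchs realizations, which are \emph{different}: $L_n^{\mathrm{op}}-L_n = n(\ve_1+\ve_2)P^-_n\neq 0$.) The reason the analogy with $P^\Delta_m$ breaks is exactly the point the paper flags: $P_n$ is a lagrangian correspondence, hence $P^\Delta_m$ is primitive and its conjugates by $\Stab_{\pm\cC}$ agree, but the correspondence defining $c_1(\cV)$ is the diagonal cut by $c_1(\cV)$, which is \emph{not} lagrangian; as a result $\Delta c_1(\cV)-c_1(\cV)\otimes 1-1\otimes c_1(\cV)$ is a nonzero $r$-matrix correction, and this correction is exactly what differs between $\Delta c_1(\cV)$ and $\Delta^{\text{op}}c_1(\cV)$. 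Your argument that ``$\cV$ restricted to a leaf $\Leaf_\beta$ is pulled back from $F_\beta$'' only controls the diagonal blocks $\iota^*_{\alpha,\alpha}\cL$; the asymmetry lives in the off-diagonal entries of $\Stab_{\pm\cC}$ coming from the closures $\overline{\Leaf_\beta}$ with $\beta<\alpha$. The correct statement (what the paper uses) is that $R$ \emph{intertwines} $\Delta c_1(\cV)$ and $\Delta^{\text{op}}c_1(\cV)$ by definition; compressed to $F^-$, this is precisely the intertwining of the $(a_1,a_2)$- and $(a_2,a_1)$-Feigin--Fuchs Virasoro generators, after which your uniqueness argument finishes as intended.
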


We will give a sketch of proof of \thmref{thm:reflection} in
\subsecref{subsec:R_minimal} after recalling the relation between
Virasoro algebra and cohomology groups of Hilbert schemes in the next
subsection.

\subsection{Virasoro algebra and Hilbert schemes}

Historically the first link between the Virasoro algebra and
cohomology groups of instanton moduli spaces was found for the rank
$1$ case by Lehn \cite{Lehn}. Lehn's result holds for an arbitrary
nonsingular complex quasiprojective surface, but let us specialize to
the case $\CC^2$.

Recall the tautological bundle $\cV$ over the Hilbert scheme
$(\CC^2)^{[d]}$ (\subsecref{subsec:taut}). We consider its first Chern
class $c_1(\cV)$.

Let $P_n$ denote the Heisenberg operator $P^\Delta_n(1)$ for the $r=1$
case.

\begin{Theorem}[\cite{Lehn}]\label{thm:Lehn}
    We have
    \begin{multline*}
        c_1(\cV) \cup \bullet 
        =  -\frac{(\ve_1\ve_2)^2}{3!} \sum_{m_1+m_2+m_3=0}
        \normal{P_{m_1} P_{m_2} P_{m_3}}
        \\
        - \frac{\ve_1\ve_2(\ve_1+\ve_2)}4 \sum_m (|m|-1)
        \normal{P_{-m} P_m}.
    \end{multline*}
    \begin{equation*}
    \end{equation*}
\end{Theorem}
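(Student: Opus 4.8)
The plan is to follow Lehn's original strategy, reducing everything to the \emph{boundary operator} $\mathfrak d := c_1(\cV)\cup\bullet$ acting on $\bigoplus_d H^{[*]}_{\CC^\times\times\CC^\times}((\CC^2)^{[d]})$. By the dimension count recalled above this direct sum is a copy of the Fock space, freely generated from the vacuum $|\mathrm{vac}\rangle = 1_{(\CC^2)^{[0]}}$ by the creation operators $P_{-n}(\alpha) = P^\Delta_{-n}(\alpha)$; and since $\cV$ has rank $0$ over $(\CC^2)^{[0]}$ we have $\mathfrak d|\mathrm{vac}\rangle = 0$. Hence $\mathfrak d$ is completely determined by the family of commutators $[\mathfrak d, P_{-n}(\alpha)]$, $n\ge 1$. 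The theorem then splits into two tasks: (i) compute these commutators geometrically; (ii) check that the cubic-plus-quadratic expression $\mathfrak d_0$ on the right-hand side kills $|\mathrm{vac}\rangle$ and has exactly the same commutators, so that $\mathfrak d = \mathfrak d_0$ by irreducibility of the Fock space.

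For (i) I would first treat $n=1$. On the incidence locus $P_1 \subset (\CC^2)^{[d+1]}\times(\CC^2)^{[d]}\times\CC^2$ the tautological bundles sit in a short exact sequence $0\to q_2^*\cV_d\to q_1^*\cV_{d+1}\to \mathcal O_\Gamma\to 0$, where $\Gamma$ is the section recording the residual point; thus $q_1^* c_1(\cV_{d+1}) - q_2^* c_1(\cV_d) = [\Gamma]$. Feeding this difference into the convolution definition of $P^\Delta_{-1}(\alpha)$ gives $[\mathfrak d, P_{-1}(\alpha)]$ as an explicit "primed" creation operator $P'_{-1}(\alpha)$ — the one in which the newly added point is decorated by the universal tautological class — plus a correction proportional to $\ve_1+\ve_2$, coming from the equivariant canonical class $c_1(K_{\CC^2}) = -(\ve_1+\ve_2)\cdot[\mathrm{pt}]$ and from the self-intersection of the diagonal of $\CC^2$, which carries the Euler factor $\ve_1\ve_2$. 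To reach general $n$ I would invoke Lehn's algebraic observation that $P_{\pm 1}(\alpha)$ together with $\mathfrak d$ generate the whole Heisenberg algebra by iterated commutators: the Leibniz rule and the Heisenberg relation $[P_m,P_{-n}] = -\,m\,\delta_{m,n}/(\ve_1\ve_2)$ (this is $rm\langle 1,1\rangle$ with $r=1$, $\langle 1,1\rangle = -1/(\ve_1\ve_2)$) then propagate the $n=1$ formula to a closed-form quadratic-in-$P$ expression for every $[\mathfrak d, P_{-n}(\alpha)]$, with the characteristic coefficient $|n|-1$ and the Euler factors $\ve_1\ve_2$, $\ve_1+\ve_2$ emerging from the combinatorics.

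For (ii), writing $\mathfrak d_0$ for the right-hand side of the theorem, one checks $\mathfrak d_0|\mathrm{vac}\rangle = 0$ (all terms contain an annihilation operator or vanish on $(\CC^2)^{[0]}$) and computes $[\mathfrak d_0, P_{-n}]$ directly: the Heisenberg relation collapses $\bigl[\sum_{m_1+m_2+m_3=0}\normal{P_{m_1}P_{m_2}P_{m_3}},\,P_{-n}\bigr]$ to a quadratic expression, the overall $-(\ve_1\ve_2)^2/3!$ combining with the $1/(\ve_1\ve_2)$ from the commutator to leave a single $\ve_1\ve_2$, while the correction term $-\tfrac{\ve_1\ve_2(\ve_1+\ve_2)}{4}\sum_m(|m|-1)\normal{P_{-m}P_m}$ supplies exactly the $(\ve_1+\ve_2)$-part; matching term by term against the geometric answer from step (i) finishes the proof.

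The main obstacle is the geometric input in step (i): the incidence varieties $P_n$ are singular for $n\ge 2$, which is precisely why one is forced through the $P_1$-plus-bootstrap route, and the genuinely delicate points are the excess- and self-intersection contributions along the diagonal of $\CC^2$ — equivalently, correctly extracting the equivariant Chern classes of $T\CC^2$, namely the factors $\ve_1\ve_2$ and $\ve_1+\ve_2$, and pinning down the combinatorial coefficient $|n|-1$. A cleaner but computationally heavier alternative that sidesteps the singular geometry is to prove the identity after localization: both sides are determined by their matrix coefficients in the torus-fixed (monomial-ideal) basis of $\bigoplus_d H^{[*]}_{\CC^\times\times\CC^\times}((\CC^2)^{[d]})$, where the assertion reduces to an explicit hook-length-type identity for sums over pairs of partitions differing by a single box, verifiable directly.
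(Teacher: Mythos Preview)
The paper does not give a proof of this theorem: it is stated with attribution to Lehn, and the only further remark is that ``A different proof, which works only for $\CC^2$, was given in \cite{more}. Relation with Jack polynomials is explained there.'' So there is nothing in the paper to compare against line by line.

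That said, your outline is faithful to Lehn's original argument: reduce to the commutators $[\mathfrak d, P_{-n}]$, compute $[\mathfrak d, P_{-1}]$ from the tautological exact sequence on the incidence correspondence, and bootstrap to general $n$ using that $\mathfrak d$ together with $P_{\pm 1}$ generate. The identification of the equivariant corrections with $\ve_1+\ve_2$ and $\ve_1\ve_2$ is the right picture. Your alternative at the end --- localizing to the monomial-ideal fixed points and checking a combinatorial identity --- is in fact closer in spirit to the approach the paper alludes to via \cite{more}, where the operator $c_1(\cV)\cup\bullet$ is diagonalized in the fixed-point basis and identified with the Calogero--Sutherland/Jack operator; the cubic Heisenberg expression then drops out of the known bosonization of that operator. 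Either route is acceptable here.
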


Taking the commutator with $P_n$, we have
\begin{equation*}
    [c_1(\cV)\cup\bullet, P_n]
    = \frac{n\ve_1\ve_2}2 \sum_{l+m=n} \normal{P_l P_m}
    - \frac{n(|n|-1)}2 (\ve_1+\ve_2) P_n.
\end{equation*}
If we compare this with \eqref{eq:18}, we find that this looks very
similar to $n L_n$, except a mysterious expression $|n|$.

A different proof, which works only for $\CC^2$, was given in
\cite{more}. Relation with Jack polynomials is explained there.

\subsection{$R$-matrix at the minimal element}\label{subsec:R_minimal}

Let us give a sketch of the proof of \thmref{thm:reflection}. 

The first step is to determine the classical $r$-matrix, which is the
second coefficient of $R$:
\[
    R = 1 + \frac{(\ve_1+\ve_2)}{a_1 - a_2} r + O((a_1-a_2)^{-2}).
\]

Recall we have $\bigoplus H^{[*]}_\TT((\Gi[2]{d})^T) \cong
F^\Delta\otimes F^-$ such that the $R$-matrix is of a form
$\id_{F^\Delta}\otimes R'$ for some operator $R'$ on
$F^-\otimes_{\bA_T}\bF_T$. In order to determine an operator of this
form, it is enough to determine the matrix element for the component
$(\CC^2)^{[0]}\times (\CC^2)^{[d]}$ in $(\Gi[2]{d})^T$. (See
\cite[Lem.~12.4.2]{MO}.) This is the minimum component with respect to
the partial order in \defref{def:order}. By the triangularity of the
stable envelope, the matrix element is explicitly written in terms of
$e(L_\alpha^\pm)$. The vector bundles $L_\alpha^+$, $L_\alpha^-$ are
the tautological vector bundle $\cV$, tensored with appropriate
characters. Here $(\CC^2)^{[0]}$ is a point, so $(\CC^2)^{[0]}\times
(\CC^2)^{[d]} = (\CC^2)^{[d]}$.
Since the rank of $\cV$ is $d$, which is $-\ve_1\ve_2 \sum_{n>0}
P_{-n} P_n$ (the leading part of $L_0$), we determine the classical
$r$-matrix as
\begin{equation*}
    r = - \ve_1\ve_2 \sum_{n>0} P^-_{-n} P_n^-.
\end{equation*}
(See \cite[Th.~12.4.4]{MO}.)

Next we consider the first Chern class $c_1(\cV)$ of the tautological
bundle over $\Gi[2]{d}$. Recall the formula of the coproduct $\Delta$
on Heisenberg operator $P^\Delta_{-m}(\alpha)$ in
\propref{prop:12.2.1}. We have $\Delta P^\Delta_{-m}(\alpha) =
P_{-m}(\alpha)\otimes 1 + 1\otimes P_{-m}(\alpha)$ for rank $2$.
The underlying geometric reason of this formula
is that $P^\Delta_{-m}(\alpha)$ is given by the lagrangian
correspondence.
Note $c_1(\cV)$ is the fundamental class of the diagonal cut out by
$c_1(\cV)$ as a correspondence. It is not lagrangian, as it is cut.
But one can show that $\Delta c_1(\cV) - c_1(\cV)\otimes 1 - 1\otimes
c_1(\cV)$ involves \emph{only} the classical $r$-matrix. (See
\cite[\S10.1.2]{MO}.)
From this together with the above formula of the classical $r$-matrix,
one calculate $\Delta c_1(\cV) - c_1(\cV)\otimes 1 - 1\otimes
c_1(\cV)$. Then we combine it with the formula of $c_1(\cV)\otimes 1 -
1\otimes c_1(\cV)$ in \thmref{thm:Lehn}. The formula is given in
\cite[Th.~14.2.3]{MO}, but only its restriction on $F^-$ is necessary
for us.

Let us consider
the commutator $[\Delta c_1(\cV), P_n^\Delta]$ acting on $\bigoplus
H^{[*]}_\TT((\Gi[2]{d})^T)\otimes_{\bA_T}\bF_T \cong
F^\Delta_\loc\otimes F^-_\loc$, where the subscript `$\loc$' means
$\otimes_{\bA_T}\bF_T$. We take its restriction to $1\otimes
F^-_\loc\cong F^-_\loc$, composed with the projection
$F^\Delta_\loc\otimes F^-_\loc\to 1\otimes F^-_\loc$. It is an operator
on $F^-_\loc$. Let us denote it by $[\Delta c_1(\cV),
P_n^\Delta]|_{F^-_\loc}$. Then the formula \cite[Th.~14.2.3]{MO} gives us

\begin{Theorem}\label{thm:14.2.3}
    We have $[\Delta c_1(\cV), P_n^\Delta]|_{F^-_\loc} = n L_n$, where
    $L_n$ is given by \eqref{eq:18}.
\end{Theorem}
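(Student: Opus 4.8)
The plan is to reduce the statement to a purely Heisenberg‑algebra identity by combining three ingredients already available to us: the coproduct formula for $c_1(\cV)$ from \cite[\S10.1.2]{MO}, Lehn's formula \thmref{thm:Lehn}, and the classical $r$‑matrix computed in \subsecref{subsec:R_minimal}. \textbf{Step 1 (coproduct of $c_1(\cV)$).} Since $c_1(\cV)$ is represented by a divisor sitting inside the Lagrangian diagonal of $\Gi[2]{d}\times\Gi[2]{d}$ — a ``cut'' Lagrangian correspondence — its image under $\Delta$ fails to be primitive only by a correction lying in the subalgebra generated by the classical $r$‑matrix. Using $r=-\ve_1\ve_2\sum_{k>0}P^-_{-k}P^-_k$ together with \cite[\S10.1.2]{MO}, one writes
\[
   \Delta c_1(\cV) = c_1(\cV)\otimes 1 + 1\otimes c_1(\cV) + T_{\vec a} + C(r),
\]
where $T_{\vec a}$ is the equivariant twist coming from the weights $a_1$, $a_2$ of the two summands of $\iota^*\cV$ on the fixed locus $(\Gi[2]{d})^T=\bigsqcup(\CC^2)^{[d_1]}\times(\CC^2)^{[d_2]}$ (so $T_{\vec a}$ is $a_i$ times the grading operator $-\ve_1\ve_2\sum_{k>0}P^{(i)}_{-k}P^{(i)}_k$ on the $i$‑th factor), and $C(r)$ is the explicit $r$‑matrix correction.

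\textbf{Step 2 (Heisenberg rewriting).} Substitute Lehn's cubic$+$quadratic expression for $c_1(\cV)\otimes 1$ and for $1\otimes c_1(\cV)$ in terms of $P^{(1)}$, $P^{(2)}$, then pass to the generators $P^\Delta_m=P^{(1)}_m+P^{(2)}_m$, $P^-_m=P^{(1)}_m-P^{(2)}_m$, which span two mutually commuting rank‑$2$ Heisenberg algebras with $[P^-_m,P^\Delta_n]=0$, $[P^\Delta_m,P^\Delta_n]=2m\delta_{m,-n}\la 1,1\ra$, $\la 1,1\ra=-1/\ve_1\ve_2$. Here one uses $P^{(1)}_aP^{(1)}_b+P^{(2)}_aP^{(2)}_b=\tfrac12\bigl(P^\Delta_aP^\Delta_b+P^-_aP^-_b\bigr)$, which is exactly what makes the sum symmetric under exchange of the two fixed‑point factors.

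\textbf{Step 3 (commute and restrict).} Commute with $P^\Delta_n$: the pure‑$P^-$ parts drop out, and every remaining factor is differentiated by $[P^{(i)}_m,P^\Delta_n]=m\delta_{m,-n}\la 1,1\ra$. Then restrict to $F^-_\loc$, i.e.\ take the matrix coefficient on the $F^\Delta$‑vacuum; this kills every surviving nonzero mode $P^\Delta_m$, leaving only terms purely in $P^-$ and in the scalars $P^\Delta_0$, $P^-_0$, $\vec a$, $\ve_1$, $\ve_2$. The cubic Lehn term then yields $-\tfrac{n\ve_1\ve_2}{4}\sum_m\normal{P^-_mP^-_{n-m}}$; its $m=0$ and $m=n$ pieces combine with $T_{\vec a}$, via $a_1-a_2=\ve_1\ve_2 P^-_0+(\ve_1+\ve_2)$, into the $P^-_0P^-_n$ contribution of \eqref{eq:18}; the quadratic Lehn term (which naïvely contributes $\propto(|n|-1)P^\Delta_n$ and hence vanishes on restriction) together with $[C(r),P^\Delta_n]|_{F^-_\loc}$ supplies precisely the linear term $-\tfrac{n(n+1)}{2}(\ve_1+\ve_2)P^-_n$ — the ``mysterious $|m|$'' of Lehn's formula cancelling against the $r$‑matrix correction. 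Comparing coefficients gives $[\Delta c_1(\cV),P^\Delta_n]|_{F^-_\loc}=nL_n$; the case $n=0$ is handled separately and recovers the highest‑weight formula \eqref{eq:29} from the identification of $-\ve_1\ve_2\sum_{k>0}P^-_{-k}P^-_k$ with the leading part of $L_0$.

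\textbf{Main obstacle.} The one step that is not ``plug in Lehn and commute'' is the determination of $C(r)$ and of $[C(r),P^\Delta_n]|_{F^-_\loc}$; this rests on the Maulik–Okounkov analysis of \cite[\S10.1.2]{MO} of how the coproduct of a non‑Lagrangian (cut) correspondence deviates from primitivity. The second delicate point is the bookkeeping of normal‑ordering constants and of the $\vec a$‑twist, so that the shift by $\ve_1+\ve_2$ in the Virasoro generators (and in the weight \eqref{eq:29}) comes out correctly.
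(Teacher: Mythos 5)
The route you propose — expand $\Delta c_1(\cV)$ via \cite[\S10.1.2]{MO}, substitute Lehn's formula, rewrite in $P^\Delta/P^-$, then commute with $P^\Delta_n$ and project onto $1\otimes F^-_\loc$ — is exactly the strategy the paper sketches (and defers to \cite[Th.~14.2.3]{MO} for the final bookkeeping), so there is no methodological disagreement. But two points in your Step~1 and Step~3 should be tightened, because as stated they would actually prevent the argument from closing.

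First, you describe the non-primitive correction as ``a correction lying in the subalgebra generated by the classical $r$-matrix.'' Taken literally this cannot be what \cite[\S10.1.2]{MO} says: $r=-\ve_1\ve_2\sum_{k>0}P^-_{-k}P^-_k$ lies entirely in the $P^-$ modes, hence \emph{commutes} with every $P^\Delta_n$, and a correction built only out of $r$ would therefore drop out of $[\Delta c_1(\cV),P^\Delta_n]$ entirely. But it must not drop out: after the cubic Lehn piece produces $-\tfrac{n\ve_1\ve_2}{4}\sum_{m\ne 0,n}\normal{P^-_mP^-_{n-m}}$, the quadratic Lehn piece contributes only $-\tfrac{n(n-1)}{2}(\ve_1+\ve_2)P^\Delta_n$ (which dies on restriction, as you correctly note), and $[T_{\vec a},P^\Delta_n]=-\tfrac{n(a_1-a_2)}{2}P^-_n$, one is still missing the piece $-\tfrac{n^2}{2}(\ve_1+\ve_2)P^-_n$ needed to produce the full linear term of $nL_n$ (after $a_1-a_2$ is re-expressed through $P^-_0$). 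That piece has to come from the non-primitive correction, so the correction must involve cross terms such as $P^\Delta_{-k}P^-_k$ (equivalently $P^{(1)}_{-k}P^{(2)}_k-P^{(2)}_{-k}P^{(1)}_k$), not $r$ alone. ``Involves only the classical $r$-matrix'' in \cite{MO} and in the paper's sketch refers to the fact that the off-diagonal coefficients are read off from $r$; it does not mean the correction is a function of $r$ in the commutant of $F^\Delta$.

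Second, your claimed matching in Step~3 is off by a linear term: $[T_{\vec a},P^\Delta_n]=-\tfrac{n(a_1-a_2)}{2}P^-_n$ is \emph{not} equal to the $m=0,n$ piece $-\tfrac{n\ve_1\ve_2}{2}P^-_0P^-_n$ of the quadratic sum — they differ by $-\tfrac{n(\ve_1+\ve_2)}{2}P^-_n$ because of the $-(\ve_1+\ve_2)$ shift in the definition of $P^-_0$. Consequently the contribution that has to be supplied by the $r$-matrix correction is $-\tfrac{n^2}{2}(\ve_1+\ve_2)P^-_n$, not the $-\tfrac{n(n+1)}{2}(\ve_1+\ve_2)P^-_n$ you quote (the latter is the explicit linear term in $L_n$ before absorbing the $P^-_0$ shift). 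You flag the ``bookkeeping of normal-ordering constants and the $\vec a$-twist'' as delicate — it is, and this is exactly where the discrepancy sits; without pinning it down the coefficient of the Virasoro generator (and hence the central charge) would not come out right.

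Once these two points are corrected — acknowledge that $C(r)$ contains off-diagonal $P^\Delta$-$P^-$ cross terms so $[C(r),P^\Delta_n]\ne 0$, and fix the $(\ve_1+\ve_2)$ bookkeeping — the argument is the one in \cite{MO} and in the paper.
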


Since $R$ intertwines $\Delta c_1(\cV)$ and $\Delta^{\text{op}}
c_1(\cV)$ by its definition, the above implies that 
it intertwines Virasoro operators.

Note that $F^-_\loc$ is characterized in $F^\Delta_\loc\otimes
F^-_\loc$ as the intersection of kernels of $P^\Delta_m$ for $m > 0$.
This subspace is nothing but $\bigoplus_d
IH^{[*]}_\TT(\Uh[\SL(2)]{d})\otimes_{\bA_T}\bF_T$ by
Exercise~\ref{ex:trivial}. Thus $\bigoplus_d
IH^{[*]}_\TT(\Uh[\SL(2)]{d})\otimes_{\bA_T}\bF_T$ is a module of the
Virasoro algebra, which is the $\scr W$-algebra associated with $\g =
\algsl(2)$. This is the first case of the AGT correspondence mentioned
in Introduction.

Moreover the operator $[\Delta c_1(\cV), P_n^\Delta]$ is well-defined
on \emph{non}-localized equivariant cohomology
$IH^{[*]}_\TT(\Uh[\SL(2)]{d})$ if $P_n^\Delta = P_n^\Delta(1)$ is
replaced by $\ve_1\ve_2 P_n^\Delta = P_n^\Delta([0])$ for $n > 0$. This is the starting point of our discussion on integral forms. See \subsecref{subsec:integr-form-viras}.

\begin{Remark}
    For quiver varieties, we have tautological vector bundles $\cV_i$
    associated with vertexes $i$. The formula of $\Delta c_1(\cV_i)$
    is given in the same way (see \cite[Th.~10.1.1]{MO}). On the other
    hand, the coproduct of the Yangian $Y(\g)$ associated with a
    finite dimensional simple Lie algebra $\g$ has an explicit formula
    for the first Fourier mode of fields corresponding to Chevalley
    generators of $\g$.
    It is given in terms of the classical $r$-matrix as in the
    geometric construction, where $r$ is the invariant bilinear form.
    The constant Fourier modes are primitive, and there are no known
    explicit formula for second or higher Fourier modes.

    When $c_1(\cV_i)$ is identified with $h_{i,1}$ for type $ADE$
    quiver varieties, the first Fourier mode of the field
    corresponding to a Cartan element $h_i$, geometric and algebraic
    coproducts coincide. Since $h_{i,1}$ together with constant modes
    generates $Y(\g)$, two coproducts are equal \cite{MR3192991}.
\end{Remark}

\section{Perverse sheaves on instanton moduli
  spaces}\label{lec:perv-sheav-uhlenb}

We now turn to $\Uh{d}$ for general $G$.


\subsection{Hyperbolic restriction on instanton moduli spaces}

Let $\rho\colon\CC^\times\to T$ be a one parameter subgroup. We have associated Levi and parabolic subgroups
\begin{equation*}
        L 
        = G^{\rho(\CC^\times)},
\qquad
        P 
        = \left\{ g\in G \middle| \exists \lim_{t\to 0}\rho(t) g \rho(t)^{-1}
        \right\}.
\end{equation*}
Unlike before, here we allow nongeneric $\rho$ so that
$G^{\rho(\CC^\times)}$ could be different from $T$. This is not an
actual generalization. We can replace $T$ by $Z(L)^0$, the connected
center of $L$. Then $\rho$ above can be considered as a generic one
parameter subgroup in $Z(L)^0$.

We consider the induced $\CC^\times$-action on $\Uh{d}$. Let us
introduce the following notation for the diagram \eqref{eq:13}:
\begin{equation}\label{eq:17}
    \Uh[L]{d} \defeq (\Uh{d})^{\rho(\CC^\times)}
    \overset{p}{\underset{i}{\leftrightarrows}} 
    \Uh[P]{d}\defeq \cA_{\Uh{d}}
    \overset{j}{\rightarrow} \Uh{d}.
\end{equation}

Let us explain how these notation can be justified. 
If we restrict our concern to the open subscheme $\Bun{d}$, a framed
$G$-bundle $(\mathcal F,\varphi)$ is fixed by $\rho(\CC^\times)$ if
and only if we have an $L$-reduction $\mathcal F_L$ of $\mathcal F$
(i.e., $\mathcal F = \mathcal F_L\times_L G$) so that $\mathcal
F_L|_{\linf}$ is sent to $\linf\times L$ by the trivialization
$\rho$. Thus $(\Bun{d})^{\rho(\CC^\times)}$ is the moduli space of
framed $L$-bundles, which we could write
$\Bun[L]{d}$.\footnote{\label{fnt:restriction}Here we use the
  assumption $G$ is of type $ADE$. The instanton number is defined via
  an invariant bilinear form on $\g$. For almost simple groups, we
  normalize it so that the square length of the highest root $\theta$
  is $2$. If $G$ is of type $ADE$, instanton numbers are preserved for
  fixed point sets, but not in general. See
  \cite[\S2.1]{2014arXiv1406.2381B}.}
The definition of Uhlenbeck partial compactification is a little
delicate, and is defined for almost simple groups.
Nevertheless it is still known that $\Uh[L]{d}$ is homeomorphic to the
Uhlenbeck partial compactification for $[L,L]$ when it has only one
simple factor (\cite[Prop.~4.2.5]{2014arXiv1406.2381B}), though we do
not know they are the same as schemes or not.
We will actually use this fact later, therefore the same notation for
fixed point subschemes and genuine Uhlenbeck partial compactifications
are natural for us.

On the notation $\Uh[P]{d}$: If we have a framed $P$-bundle $(\mathcal
F_P,\varphi)$, the associated framed $G$-bundle $(\mathcal F_P\times_P
G, \varphi\times_P G)$ is actually a point in the attracting set
$\cA_{\Uh{d}}$. Thus the moduli space of framed $P$-bundle
$\Bun[P]{d}$ is an open subset in $\cA_{\Uh{d}}$.
This is the reason why we use the notation $\Uh[P]{d}$. However a
point in $\Uh[P]{d}\cap\Bun{d}$ is not necessarily coming from a
framed $P$-bundle like this. See Exercise~\ref{ex:ext} below.
Nevertheless we believe that it is safe to use the notation
$\Uh[P]{d}$, as we never consider genuine Uhlenbeck partial
compactifications for the parabolic subgroup $P$.

\begin{Example}
    If $G = \SL(r)$ and $L = S(\GL(r_1)\times \GL(r_2))$, $\Bun[L]{d}$
    is the moduli space of pairs of framed vector bundles
    $(E_1,\varphi_1)$, $(E_2,\varphi_2)$. On the other hand,
    $\Bun[P]{d}$ is the moduli space of vector bundles $E$ which arise
    as extension $0\to E_1 \to E\to E_2\to 0$. (In this situation, one
    can show that $E$ determines $E_1$, $E_2$.)
\end{Example}

\begin{Exercise}[cf.\ an example in \protect{\cite[\S4.4]{2014arXiv1406.2381B}}]\label{ex:ext}
    Consider the case $G=\SL(r)$. Suppose $(E,\varphi)$ is a framed
    vector bundle which fits in an exact sequence $0\to E_1\to E\to
    E_2\to 0$ compatible with the framing. Here we merely assume
    $E_1$, $E_2$ are torsion-free sheaves. Are $E_1$, $E_2$ locally
    free ? Give a counter-example.
\end{Exercise}

\begin{Definition}
Now we consider the hyperbolic restriction functor $p_* j^!\colon
D^b_{\TT}(\Uh{d})\to D^b_{\TT}(\Uh[L]{d})$ and denote it by
$\Phi^P_{L,G}$. If groups are clear from the context, we simply denote
it by $\Phi$.
\end{Definition}

We have the following associativity of hyperbolic restrictions.

\begin{Proposition}\label{prop:trans}
  Let $Q$ be another parabolic subgroup of $G$, contained in $P$ and
  let $M$ denote its Levi subgroup. Let $Q_L$ be the image of $Q$ in
  $L$ and we identify $M$ with the corresponding Levi group.
  Then we have a natural isomorphism of functors
    \begin{equation*}
       \Phi_{M, L}^{Q_L} \circ \Phi_{L,G}^P \cong \Phi_{M,G}^Q.
    \end{equation*}
\end{Proposition}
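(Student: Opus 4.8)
The plan is to exhibit both $\Phi^{Q_L}_{M,L}\circ\Phi^{P}_{L,G}$ and $\Phi^{Q}_{M,G}$ as hyperbolic restrictions attached to a single rank-two torus, and then reduce the comparison to a base-change isomorphism for a cartesian square.

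First I would set up the cocharacters. Fix a maximal torus $T\subseteq M\subseteq L\subseteq G$, and let $\rho\in\Hom_{\mathrm{grp}}(\CC^\times,T)$ be the one parameter subgroup entering \eqref{eq:17}, so that $L=Z_G(\rho)$ and $P=P(\rho)$. Since $Q_L$ is a parabolic of $L$ with Levi $M$, choose $\sigma\in\Hom_{\mathrm{grp}}(\CC^\times,T)$, generic in $Z(M)^0$, with $M=Z_L(\sigma)$ and $Q_L=P_L(\sigma)$. Put $\rho'=N\rho+\sigma$. Comparing $\langle\rho',\alpha\rangle=N\langle\rho,\alpha\rangle+\langle\sigma,\alpha\rangle$ with $\langle\rho,\alpha\rangle$ over the roots $\alpha$ of $G$, one sees that for $N\gg0$ one has $Z_G(\rho')=Z_G(\rho)\cap Z_L(\sigma)=M$ and $P(\rho')=Q$; this is exactly where the hypothesis $Q\subseteq P$ is used, since it is equivalent to saying that the roots of $Q$ are the roots of $P$ lying outside $L$ together with the roots of $Q_L$. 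Enlarging $N$ to avoid the finitely many walls of the action of $S:=\rho(\CC^\times)\cdot\sigma(\CC^\times)$ on $\Uh{d}$, one also gets $(\Uh{d})^{\rho'(\CC^\times)}=(\Uh{d})^{S}=\bigl((\Uh{d})^{\rho(\CC^\times)}\bigr)^{\sigma(\CC^\times)}=(\Uh[L]{d})^{\sigma(\CC^\times)}=\Uh[M]{d}$. Thus $\Phi^{Q}_{M,G}$ is the hyperbolic restriction $p_{\rho'*}j_{\rho'}^{!}$ attached to $\rho'$ via \eqref{eq:17}, and $\Phi^{Q_L}_{M,L}=p^{L}_{\sigma*}(j^{L}_{\sigma})^{!}$ for the $\sigma$-action on $\Uh[L]{d}$.

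Next I would record how the attracting sets nest. Embedding $\Uh{d}$ $S$-equivariantly into a finite-dimensional $S$-module and reading off weights, one checks that for $N\gg0$
\[
   \cA_{\rho'}\;=\;p_\rho^{-1}\bigl(\cA^{L}_{\sigma}\bigr),
\]
where $p_\rho\colon\Uh[P]{d}=\cA_\rho\to\Uh[L]{d}$ is as in \eqref{eq:17} and $\cA^{L}_{\sigma}$ is the $\sigma$-attracting set in $\Uh[L]{d}$; moreover $j_{\rho'}=j_\rho\circ\beta$ for a closed immersion $\beta\colon\cA_{\rho'}\hookrightarrow\cA_\rho$, and $p_{\rho'}=p^{L}_{\sigma}\circ\gamma$ where $\gamma=p_\rho|_{\cA_{\rho'}}\colon\cA_{\rho'}\to\cA^{L}_{\sigma}$ — the last factorization being the statement that the $\rho'$-limit is obtained by first taking the $\rho$-limit and then the $\sigma$-limit, valid for $N\gg0$. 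In particular the square
\[
   \begin{CD}
      \cA_{\rho'} @>\beta>> \cA_\rho \\
      @V\gamma VV @VV p_\rho V \\
      \cA^{L}_{\sigma} @>j^{L}_{\sigma}>> \Uh[L]{d}
   \end{CD}
\]
is cartesian. I expect this step — making these identifications precise (if one wishes, even at the scheme level) and checking the composition-of-limits claim for $N\gg0$ — to be the main technical obstacle; everything after it is formal.

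Finally, with the square in hand,
\[
   \Phi^{Q}_{M,G}=p_{\rho'*}j_{\rho'}^{!}=p^{L}_{\sigma*}\,\gamma_{*}\,\beta^{!}\,j_\rho^{!},
\]
so it suffices to identify $\gamma_{*}\beta^{!}$ with $(j^{L}_{\sigma})^{!}\,p_{\rho*}$. For a cartesian square this is the base-change isomorphism $f^{!}g_{*}\cong g'_{*}f'^{!}$, which one deduces from the ordinary $f^{*}g_{!}\cong g'_{!}f'^{*}$ by Verdier duality; alternatively one may first use \thmref{thm:Braden} to rewrite $p_{\rho*}j_\rho^{!}$ and $p^{L}_{\sigma*}(j^{L}_{\sigma})^{!}$ in the form $p_{-!}j_{-}^{*}$ attached to the respective repelling sets $\cR_\rho$, $\cR^{L}_{\sigma}$, and then apply $f^{*}g_{!}\cong g'_{!}f'^{*}$ to the cartesian square built symmetrically from $\cR_{\rho'}$, $\cR_\rho$, $\cR^{L}_{\sigma}$. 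Either way one obtains
\[
   \Phi^{Q}_{M,G}=p^{L}_{\sigma*}\,(j^{L}_{\sigma})^{!}\,p_{\rho*}\,j_\rho^{!}=\Phi^{Q_L}_{M,L}\circ\Phi^{P}_{L,G},
\]
and it remains only to check that the isomorphism is $\TT$-equivariant and independent of the auxiliary choices of $\rho$, $\sigma$ and $N$, which follows from the same uniqueness-type arguments already used for attracting sets.
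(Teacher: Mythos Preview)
Your proof is correct and follows the same route as the paper: the paper's one-line argument is precisely that $\Uh[Q]{d}=\Uh[P]{d}\times_{\Uh[L]{d}}\Uh[{Q_L}]{d}$, which is your cartesian square $\cA_{\rho'}=p_\rho^{-1}(\cA^{L}_{\sigma})$, after which the composition of the two hyperbolic restrictions is formal. You have simply supplied the details the paper omits --- the explicit cocharacter $\rho'=N\rho+\sigma$ and the base-change isomorphism $\gamma_*\beta^!\cong(j^{L}_{\sigma})^{!}p_{\rho*}$ --- but the key geometric input is identical.
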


\begin{proof}
    It is enough to show that
    \begin{equation*}
        \label{eq:2a}
            \UhP{d} \times_{\UhL{d}} \Uh[{Q_L}]{d} = \Uh[Q]{d}.
    \end{equation*}
    This is easy to check. See \cite[Prop.~4.5.1]{2014arXiv1406.2381B}.
\end{proof}

\subsection{Exactness}\label{subsec:exactness}

For a partition $\lambda$, let $S_\lambda\CC^2$ be a stratum of a
symmetric product as before. Let $\Stab(\lambda) = S_{\alpha_1}\times
S_{\alpha_2}\times\dots$ if we write $\lambda =
(1^{\alpha_1}2^{\alpha_2}\dots)$. We consider an associated covering
\begin{equation*}
    (\CC^2)^{\alpha_1}\times (\CC^2)^{\alpha_2}\times
  \cdots \setminus\text{diagonal}
  \to
  S_{\lambda}(\CC^2).
\end{equation*}
Let $\rho$ be a simple local system over $S_\lambda\CC^2$
corresponding to an irreducible representation of $\Stab(\lambda)$.

We consider the following class of perverse sheaves:
\begin{Definition}
    Let $\Perv(\Uh[L]{d})$ be the additive subcategory of the abelian
    category of semisimple perverse sheaves on $\Uh[L]{d}$, consisting of
    finite direct sum of $\IC(\overline{\Bun[L]{d'}\times
    S_\lambda(\CC^2)},1\boxtimes\rho)$ for various $d'$, $\lambda$,
    $\rho$.
\end{Definition}

Here we consider the stratification of $\Uh[L]{d}$ as in \eqref{eq:7}:
\begin{equation*}
    \Uh[L]{d} = \bigsqcup_{d=|\lambda|+d'} \Bun[L]{d'} \times S_{\lambda}(\CC^2).
\end{equation*}
It is the restriction of the stratification \eqref{eq:7} to $\Uh[L]{d}$.

Let us explain why we need to consider nontrivial local systems, even
though our primary interest will be on $\IC(\Uh{d})$: When we analyze
$\IC(\Uh{d})$ through hyperbolic restriction functor, $\IC$ sheaves
for nontrivial local systems occur. This phenomenon can be seen for
type $A$ as follows.

Let us take the Gieseker space $\Gi{d}$ and consider the hyperbolic
restriction for a chamber $\cC$ for the $T$-action. By
\corref{cor:hyp-perverse}(2), we have $p_* j^! \pi_*(\cC_{\Gi{d}})
\cong \pi^T_*(\cC_{(\Gi{d})^T})$, where $(\Gi{d})^T$ is the fixed
point set and $\pi^T\colon (\Gi{d})^T\to (\Uh[r]{d})^T$ is the
restriction of $\pi$. The fixed point sets are given by Hilbert
schemes and symmetric products, and $\pi^T$ factors as
\begin{multline*}
    (\Gi{d})^T = 
    \displaystyle{\bigsqcup_{d_1+\dots+d_r = d} 
        (\CC^2)^{[d_1]}\times\cdots\times (\CC^2)^{[d_r]}}
\\
      \xrightarrow{\pi\times\cdots\times\pi}
      \bigsqcup S^{d_1}\CC^2\times\cdots\times S^{d_r}\CC^2
      \xrightarrow{\kappa}
        S^d\CC^2 = (\Uh[r]{d})^T,
\end{multline*}
where $\kappa$ is the `sum map', defined by $\kappa(C_1,\cdots,C_r) =
C_1+\dots+C_r$ if we use the `sum notation' for points in symmetric
products, like $x_1 + x_2 + \dots + x_d$. The pushforward for the
first factor $\pi\times\cdots\times\pi$ is
\begin{equation*}
    \bigoplus_{|\lambda^1|+\dots+|\lambda^r|=d}
    \cC_{\overline{S_{\lambda^1}(\CC^2)}}\boxtimes\cdots\boxtimes
    \cC_{\overline{S_{\lambda^r}(\CC^2)}}
\end{equation*}
by discussion in \secref{sec:inters-cohom-group}, where
$\lambda^1$, \dots, $\lambda^r$ are partitions. Therefore we do not
have nontrivial local systems. But $\kappa$ produces nontrivial local
systems. Since $\kappa$ is a finite morphism, in order to calculate
its pushforward, we need to study how $\kappa$ restricts to covering
on strata. For example, for $\lambda = (1^r)$, $d_1=d_2=\cdots=d_r =
1$, it is a standard $S_r$-covering
$(\CC^2)^r\setminus\text{diagonal}\to S_{(1^r)}(\CC^2)$.

We have the following

\begin{Theorem}
    $\Phi^P_{L,G}$ sends $\Perv(\Uh{d})$ to $\Perv(\Uh[L]{d})$.
\end{Theorem}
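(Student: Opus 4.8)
The plan is to deduce the theorem from the hyperbolic semi-smallness criterion \thmref{thm:hypsemismall}, applied stratum by stratum, after reducing to the top stratum. Since $\Perv(\Uh{d})$ is generated by the sheaves $\IC(\overline{\Bun{d'}\times S_\lambda(\CC^2)},1\boxtimes\rho)$, it is enough to carry each of these into $\Perv(\Uh[L]{d})$; arguing by induction on $d$, the decisive case is the top one, $\IC(\Uh{d})=\IC(\overline{\Bun{d}})$. The key simplification is that $\rho(\CC^\times)\subset T\subset G$ acts by change of framing, hence \emph{trivially} on every factor $S^{d-d'}(\CC^2)$ appearing in \eqref{eq:7}. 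Thus $\Uh[L]{d}=(\Uh{d})^{\rho(\CC^\times)}$ inherits the stratification $\bigsqcup_{d=|\lambda|+d'}\Bun[L]{d'}\times S_\lambda(\CC^2)$, the maps $p$, $p_-$ act as the identity in the $\CC^2$-directions, and by base change $\Phi^P_{L,G}$ commutes with the finite pushforward along the map $\xi$ of \subsecref{sec:inters-cohom-group} while affecting only the $\Uh{d'}$-factor of its source. Hence $\Phi^P_{L,G}(\IC(\overline{\Bun{d'}\times S_\lambda(\CC^2)},1\boxtimes\rho))$ is the pushforward (along the $L$-version of $\xi$, which lands in $\Uh[L]{d}$) of $\Phi^P_{L,G}(\IC(\Uh{d'}))$ boxed with the unchanged $\IC$-sheaf of $\rho$ on $(\CC^2)^{l(\lambda)}/\Stab(\lambda)$; by induction the first factor lies in $\Perv(\Uh[L]{d'})$, and pushing forward along a finite map between symmetric-product-type spaces keeps the result in $\Perv(\Uh[L]{d})$.

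For the top case one verifies directly that $(p,j)$ of \eqref{eq:17} is hyperbolic semi-small for the stratification \eqref{eq:7} of $\Uh{d}$ and its restriction to $\Uh[L]{d}$, i.e.\ the estimates \eqref{eq:est} on $\dim p^{-1}(y_\beta)\cap X_\alpha$ and $\dim p_-^{-1}(y_\beta)\cap X_\alpha$. On the open stratum $X_0=\Bun{d}$ the Bialynicki-Birula theorem (\cite{BB}) shows $p$ and $p_-$ restrict over $\Bun[L]{d}$ to affine bundles whose fibre dimensions are the dimensions of the positive- and negative-$\rho$-weight parts of the tangent space $H^1(\proj^2,\g_\cF(-\linf))$; these add up to $\codim_{\Bun{d}}\Bun[L]{d}$, so both estimates hold, and force equality, exactly because the positive and negative weight parts have equal dimension. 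This balance follows from the $G$-invariance --- hence $\rho(\CC^\times)$-invariance --- of the holomorphic symplectic form on the smooth locus $\Bun{d}$ of $\Uh{d}$, which pairs the weight spaces $T(m)$ and $T(-m)$.

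On the lower, singular strata one uses the factorization morphism $\pi^d\colon\Uh{d}\to S^d\CC^1$ and induction: since $\rho$ acts trivially on $S^d\CC^1$, property (2) of $\pi^d$ presents a $\rho(\CC^\times)$-stable neighbourhood of a point of $\Bun{d'}\times S_\lambda(\CC^2)$ as a product of a smooth factor with factorization fibres attached to the constituent points, so the estimate along a deep stratum reduces to the estimate on the open parts $\Bun{d'}$ ($d'\le d$) together with the estimate for $\Uh{d''}$ ($d''\le d$) in a neighbourhood of its most degenerate point; the base case $d''=1$ is the estimate for $\overline{\mathbb O}_{\min}\cap\mathfrak p\to\overline{\mathbb O}_{\min}\cap\mathfrak l$ at $0$, using $\Uh{1}\cong\CC^2\times\overline{\mathbb O}_{\min}$ with $\overline{\mathbb O}_{\min}\subset\g$ the closure of the minimal nilpotent orbit --- a finite-dimensional Lie-theoretic computation that away from $0$ is the $\rho$-weight balance on the symplectic coadjoint orbit and at $0$ is the Mirkovic--Vilonen-type dimension estimate recalled in \subsecref{subsec:hyperb-restr-affine}. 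Granting \eqref{eq:est}, \thmref{thm:hypsemismall} gives that $\Phi^P_{L,G}(\IC(\Uh{d}))$ is perverse and a sum of $\IC(\overline{Y_\beta},\chi)$ tensored with multiplicity spaces, the $Y_\beta$ being the strata $\Bun[L]{d'}\times S_\lambda(\CC^2)$; finally each occurring $\chi$ is of the form $1\boxtimes\rho$, since in the $\Bun[L]{d'}$-direction the relevant fibre is an irreducible affine cell and contributes no monodromy, while genuine local systems come only from the $\Stab(\lambda)$-symmetry of configurations in $S_\lambda(\CC^2)$. Hence $\Phi^P_{L,G}(\IC(\Uh{d}))\in\Perv(\Uh[L]{d})$, closing the induction.

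The main obstacle is the dimension estimates \eqref{eq:est} on the lower strata: unlike the symplectic-resolution situation of \subsecref{sec:exactn-nearby-cycle}, for general $G$ there is no Namikawa deformation and no nearby-cycle shortcut, so hyperbolic semi-smallness must be checked directly, and it is the interplay of the factorization morphism, the local model $\CC^2\times\overline{\mathbb O}_{\min}$, and the $\rho$-weight balance from the invariant symplectic structure that makes this delicate (this is carried out carefully in \cite{2014arXiv1406.2381B}).
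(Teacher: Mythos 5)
Your overall strategy---reduce to the case of $\IC(\Uh{d})$, then establish the hyperbolic semi-smallness estimate \eqref{eq:est} and apply \thmref{thm:hypsemismall}---is the same as the paper's, and your treatment of the open-over-open case ($X_\alpha=\Bun{d}$, $Y_\beta=\Bun[L]{d}$) via Bialynicki--Birula and the $\rho$-weight pairing from the symplectic form is exactly the vector-bundle structure $\Bun[P]{d}\to\Bun[L]{d}$ used in \subsecref{subsec:calc-hyperb-restr}. However, there is a real gap at the one place the paper flags as the only remaining content: the estimate for $X_\alpha=\Bun{d}$ over the most degenerate point $y_\beta=d\cdot 0$. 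Your claimed reduction ``to the estimate for $\Uh{d''}$ at its most degenerate point, with base case $d''=1$'' does not close: the factorization morphism (together with the $\GL(2)$-action) only separates configurations whose support is not entirely concentrated at the origin, so the fully collapsed configuration $d\cdot 0$ cannot be broken into smaller pieces, and there is no inductive step from $d''=1$ to $d''>1$. Your Bialynicki--Birula argument does not apply either, since it is an argument over $\Bun[L]{d}$, and $d\cdot 0$ lies in a boundary stratum of $\Uh[L]{d}$, not in $\Bun[L]{d}$.

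The paper closes precisely this gap by a different device. First one passes from $T$ to the larger torus $T\times\CC^\times_{\rm hyp}$, for which \propref{prop:trans} still holds and the fixed-point set collapses to the single point $d\cdot 0$ (since $(\CC^2)^{\CC^\times_{\rm hyp}}=\{0\}$), so only the one estimate at $d\cdot 0$ is needed. Then for $G=\GL(r)$ the attracting and repelling sets inside the symplectic resolution $\Gi{d}$ are Lagrangian (\subsecref{subsec:leaves}); and for general $G$ one chooses a faithful embedding $G\hookrightarrow\GL(r)$, obtains an induced embedding $\Bun[G]{d}\hookrightarrow\Bun[\GL(r)]{d'}$ respecting symplectic forms, and concludes that $p^{-1}(d\cdot 0)\cap\Bun[G]{d}$ and $p_-^{-1}(d\cdot 0)\cap\Bun[G]{d}$ are isotropic, hence at most half-dimensional. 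This isotropy-via-embedding step is what replaces your (absent) induction on $d$; without it, or without the more elaborate dimension count in the appendix of \cite{2014arXiv1406.2381B}, the argument does not go through. Your minimal-nilpotent-orbit computation does settle $d=1$, but for $d>1$ the structure of $\Uh{d}$ near $d\cdot 0$ is no longer a Lie-theoretic slice and a separate argument is required.
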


By the remark after \thmref{thm:Braden} we know that $\Phi^P_{L,G}$
send $\Perv(\Uh{d})$ to semisimple complexes. From the factorization,
it is more or less clear that they must be direct sum of shifts of
simple perverse sheaves in $\Perv(\Uh[L]{d})$. Therefore the actual
content of this theorem is the $t$-exactness, that is {\it shifts are
  unnecessary.} For type $A$, it is a consequence of
\corref{cor:hyp-perverse}.

For general $G$, we use hyperbolic semi-smallness
(\thmref{thm:hypsemismall}).
The detail is given in \cite[Appendix~A]{2014arXiv1406.2381B}. We
sketch another shorter proof added at a revision, which was suggested
by a referee of \cite{2014arXiv1406.2381B}.
By a recursive nature and the factorization property of instanton
moduli spaces, it is enough to estimate dimension of the extreme
fibers, i.e., $p^{-1}(d\cdot 0)$, $p_-^{-1}(d\cdot 0)$ in
\eqref{eq:est}. Considering the stratification \eqref{eq:6}, we see
that enough to estimate dimensions of their intersections with
$\Bun{d}$.
Furthermore using the associativity of hyperbolic
restriction (\propref{prop:trans}), it is enough to prove the case
$L=T$. In fact, we can further reduce to the case of the hyperbolic
restriction for the larger torus $T\times\CC^\times_{\rm hyp}$. The
associativity (\propref{prop:trans}) remains true for the larger
torus. Moreover the fixed point set is the single point $d\cdot 0$ as
$(\CC^2)^{\CC^\times_{\rm hyp}} = \{0\}$.
We then consider the case $G=\GL(r)$. The attracting and repelling
sets in the symplectic resolution $\Gi{d}$ are lagrangian, as we
explained in \subsecref{subsec:leaves}.
For general $G$, we take a faithful embedding $G\to \GL(r)$. We have
the induced embedding $\Bun[G]{d}\to \Bun[\GL(r)]{d'}$ which respects
symplectic structures. Therefore the intersections of $p^{-1}(d\cdot
0)$, $p_-^{-1}(d\cdot 0)$ with $\Bun[G]{d}$ are isotropic, hence at
most half dimensional.

\subsection{Calculation of the hyperbolic restriction}
\label{subsec:calc-hyperb-restr}


Our next task is to compute $\Phi_{L,G}^P(\IC(\Uh{d}))$. We have two
most extreme simple direct summands in it:
\begin{aenume}
      \item $\IC(\Uh[L]{d})$,
      \item $\cC_{S_{(d)}(\CC^2)}$.
\end{aenume}
Other direct summands are basically products of type (a) and (b). Let
us first consider (a). Let us restrict the diagram \eqref{eq:17} to
$\Bun[L]{d}$.  Note that a point in $p^{-1}(\Bun[L]{d})$ is a genuine
bundle, cannot have singularities, as singularities are equal or
increased under $p$. Thus the diagram sit in moduli spaces of genuine
bundles as
\begin{equation*}
    \Bun[L]{d} \overset{p}{\underset{i}{\leftrightarrows}} 
    \Bun[P]{d}
    \overset{j}{\rightarrow} \Bun{d}.
\end{equation*}
Then $\Bun[P]{d}$ is a vector bundle over $\Bun[L]{d}$. This can be
seen as follows. For $\mathcal F\in\Bun{d}$, the tangent space
$T_{\mathcal F}\Bun{d}$ is $H^1(\proj^2,\g_{\mathcal F}(-\linf))$, where
$\g_{\mathcal F}$ is the associated bundle $\mathcal F\times_G\g$. If $\mathcal F\in\Bun[L]{d}$, we have the decomposition
\begin{multline*}
    H^1(\proj^2,\g_{\mathcal F}(-\linf))
    \cong 
    H^1(\proj^2,{\mathfrak l}_{\mathcal F}(-\linf))
    \oplus
    H^1(\proj^2,{\mathfrak n}^+_{\mathcal F}(-\linf))
    \oplus
    H^1(\proj^2,{\mathfrak n}^-_{\mathcal F}(-\linf)),
\end{multline*}
according to the decomposition $\g = \mathfrak l\oplus \mathfrak
n^+\oplus \mathfrak n^-$. Here $\mathfrak n^+$ is the nilradical of
$\mathfrak p$, and $\mathfrak n^-$ is the nilradical of the opposite
parabolic.
The first summand gives the tangent bundle of $\Bun[L]{d}$. Thus the
normal bundle is given by sum of the second and third
summands. Moreover, the second and third summands are $\Leaf$ and
$\Leaf^-$ respectively.

Therefore we have the Thom isomorphism
\(
    p_* j^! \cC_{\Bun{d}} \cong \cC_{\Bun[L]{d}}.
\)
It extends to a canonical isomorphism
\begin{equation*}
    \Hom(\IC(\overline{\Bun[L]{d}}), p_* j^! \IC(\Uh{d})) \cong \CC.
\end{equation*}
In other words, we have $p^{-1}(y_\beta)\cap X_0$ in
\thmref{thm:hypsemismall} is the fiber of the vector bundle
$\Bun[P]{d}\to\Bun[L]{d}$, hence we have the canonical isomorphism
$H_{\dim X-\dim Y_\beta}(p^{-1}(y_\beta\cap X_0)_\chi\cong \CC$, given
by its fundamental class.

On the other hand, the multiplicity of the direct summand
$\cC_{S_{(d)}(\CC^2)}$ of type (b) has no simple description. 
It is the top degree homology group of the space $p^{-1}(d\cdot
0)\cap\Bun[G]{d}$ by \thmref{thm:hypsemismall}, hence has a base given
by irreducible components. But it is hard to calculate the base.
Therefore let us introduce the following space:
\begin{equation*}
    U^d \equiv U^{d}_{L,G} \defeq
    \Hom(\cC_{S_{(d)}(\CC^2)}, \Phi^P_{L,G}(\IC(\Uh{d}))).
\end{equation*}
Thus $U^d\otimes \cC_{S_{(d)}(\CC^2)}$ is the isotropic component of
$\Phi^P_{L,G}(\IC(\Uh{d}))$ for $\cC_{S_{(d)}(\CC^2)}$.

From the factorization, we have the canonical isomorphism
\begin{equation*}
    \Phi^P_{L,G}(\IC(\Uh{d}))
    \cong \bigoplus_{d=|\lambda|+d'}
    \IC(\overline{\Bun[L]{d'}\times S_\lambda(\CC^2)},
    (U^1)^{\otimes \alpha_1}\otimes
    (U^2)^{\otimes \alpha_2}\otimes\cdots),
\end{equation*}
where $\lambda = (1^{\alpha_1}2^{\alpha_2}\dots)$ and $(U^1)^{\otimes
  \alpha_1}\otimes (U^2)^{\otimes \alpha_2}\otimes\cdots$ is a
representation of $\Stab(\lambda) = S_{\alpha_1}\times
S_{\alpha_2}\times\cdots$.  Though $(U^1)^{\otimes \alpha_1}\otimes
(U^2)^{\otimes \alpha_2}\otimes\cdots$ may not be irreducible, it is
always semisimple. We understand the associated $\IC$-sheaf as the
direct sum of $\IC$-sheaves of simple constitutes. 

If we take the global cohomology group, contributions of nontrivial
local systems vanish. (See \cite[Lemma~4.8.7]{2014arXiv1406.2381B}.)
Therefore (after taking an isomorphism
$H^*_\TT(\overline{S_\lambda\CC^2})\cong \bA_T$) we get
\begin{equation}\label{eq:24}
    \begin{split}
        & \bigoplus_d H^*_\TT(\Phi^P_{L,G}(\IC(\Uh{d}))
    \cong \bigoplus_d H^*_\TT(\IC(\overline{\Bun[L]{d}}))\otimes_\CC
    \operatorname{Sym}
    \left(U^1\oplus U^2\oplus\cdots\right),
    \\
    & \bigoplus_d H^*_\TT(\Phi^B_{T,G}(\IC(\Uh{d}))
    \cong 
    \bA_T\otimes_\CC 
    \operatorname{Sym}
    \left(U^1\oplus U^2\oplus\cdots\right),
    \end{split}
\end{equation}
where the lower isomorphism is the special case of the upper one as
$\Bun[T]{d} = \emptyset$ for $d\neq 0$ and is a point for $d=0$.
\begin{NB}
    The notation $IH^*_\TT(\Uh[L]{d})$ may be confusing, as it is not
    $IH^*_\TT(S^d(\CC^2)) = H^*_\TT(S^d(\CC^2))$ for $L=T$.
\end{NB}

The first result on $U^d$ is 
\begin{Lemma}[\protect{\cite[Lemma~4.8.11]{2014arXiv1406.2381B}}]\label{lem:calc-Ud}
\begin{equation*}
    \dim U^d = \rank G - \rank [L,L],
\end{equation*}
in particular $\dim U^d = \rank G$ if $L=T$.    
\end{Lemma}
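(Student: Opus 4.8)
The plan is to reduce the general statement to the special case $L=T$, and then, for $L=T$, to pin down the instanton‑number generating function of the localized equivariant intersection cohomology. For the reduction I would use the associativity \propref{prop:trans}: for a chain of Levi subgroups $T\subseteq L\subseteq G$ with compatible parabolics one has $\Phi^{B_L}_{T,L}\circ\Phi^P_{L,G}\cong\Phi^B_{T,G}$. Feeding each of the three hyperbolic restrictions into the factorization formula \eqref{eq:24}, comparing the resulting symmetric‑algebra descriptions of the global cohomology, and using that passing to global cohomology annihilates the non‑trivial local systems (\cite[Lemma~4.8.7]{2014arXiv1406.2381B}), one reads off — at the level of generating functions — the additivity
\[
    \dim U^d_{T,G}=\dim U^d_{T,L}+\dim U^d_{L,G}\qquad(d\ge 1).
\]
Since $\Uh[L]{d}$ is $\TT$‑equivariantly the Uhlenbeck space of the derived group $[L,L]$ (the connected central torus $Z(L)^\circ$ acts trivially), $\dim U^d_{T,L}$ is the corresponding quantity for the semisimple group $[L,L]$. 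Hence it suffices to prove the case $L=T$, i.e. $\dim U^d_{T,G'}=\rank G'$ for every semisimple simply‑connected $G'$ of type $ADE$; granting this, $\dim U^d_{L,G}=\rank G-\rank[L,L]$ follows.

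For $L=T$, \eqref{eq:24} gives $\bigoplus_d H^*_\TT(\Phi^B_{T,G}(\IC(\Uh{d})))\cong\bA_T\otimes_\CC\Sym\bigl(\bigoplus_{e\ge1}U^e\bigr)$, and by \eqref{eq:25} together with the localization theorem this is isomorphic, after inverting the equivariant parameters, to $\bigoplus_d IH^*_\TT(\Uh{d})\otimes\bF_T$. Writing $C_G(q)=\sum_{d\ge0}\dim_{\bF_T}\bigl(IH^*_\TT(\Uh[G]{d})\otimes_{\bA_T}\bF_T\bigr)\,q^d$ and comparing $q$‑graded dimensions yields
\[
    C_G(q)=\prod_{n\ge1}(1-q^n)^{-\dim U^n_{T,G}},
\]
so the assertion $\dim U^n_{T,G}=\rank G$ is equivalent to $C_G(q)=\prod_{n\ge1}(1-q^n)^{-\rank G}$, i.e. to the statement that $\bigoplus_d U^d$ has the size of $\h\otimes t\CC[t]$ — the Heisenberg directions of the Feigin--Frenkel realization of $\scW(\g)$ inside $\rank G$ free bosons. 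The coefficient of $q^1$ is already a consistency check: $\Uh[G]{1}=\CC^2\times\overline{\mathcal O_{\min}(\g)}$, and $\dim IH^*(\overline{\mathcal O_{\min}(\g)})=\rank\g$.

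For $G=\SL(r)$ the full identity is visible from the Gieseker resolution: \eqref{eq:1} gives $\sum_d\dim H^{[*]}_\TT(\Gi[r]{d})\,q^d=\prod_n(1-q^n)^{-r}$, and the semismall decomposition \eqref{eq:23}, together with the fact (\secref{sec:inters-cohom-group}) that the Heisenberg/Fock summand accounts for exactly a factor $\prod_n(1-q^n)^{-1}$, leaves $C_{\SL(r)}(q)=\prod_n(1-q^n)^{-(r-1)}$. For general type $ADE$ I would descend to type $A$ via a faithful embedding $G\hookrightarrow\SL(r)$, which (instanton numbers being preserved in type $ADE$) induces a symplectic embedding $\Bun[G]{d}\hookrightarrow\Bun[\SL(r)]{d}$; then, exactly as in the exactness argument of \subsecref{subsec:exactness}, the intersections of the extreme attracting and repelling fibres $p^{-1}(d\cdot 0)$, $p_-^{-1}(d\cdot 0)$ with $\Bun[G]{d}$ are isotropic inside the symplectic Gieseker space, and \thmref{thm:hypsemismall} identifies $U^d_{T,G}$ with the top Borel--Moore homology of $p^{-1}(d\cdot 0)\cap\Bun[G]{d}$, with basis its half‑dimensional irreducible components. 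What remains is to count these components and obtain $\rank G$.

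The main obstacle is precisely that last count in the non‑type‑$A$ case: there is no symplectic resolution of $\Uh[G]{d}$, so the Betti numbers are not available for free, and one must either carry out the irreducible‑component count in $p^{-1}(d\cdot 0)\cap\Bun[G]{d}$ by hand — controlling dimensions through the embedding into the $\GL(r)$‑Gieseker space — or invoke an independent description of $IH^*_\TT(\Uh[G]{d})$, e.g. the level‑one case of the geometric Satake correspondence for the affine Kac--Moody group of $\g$, for which $C_G(q)$ is the principally specialized character of the basic representation of $\widehat{\g}$. Everything else — associativity of hyperbolic restriction, the translation into generating functions, and the type‑$A$ computation — should be routine given the material already in the excerpt.
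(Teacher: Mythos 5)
Your reduction to $L=T$ is exactly the paper's first step, and the way you phrase it (associativity \propref{prop:trans} feeding into the $\Sym$-factorization of \eqref{eq:24}, so that $\dim U^d_{T,G}=\dim U^d_{T,L}+\dim U^d_{L,G}$) is a clean and correct account of why that reduction holds. Your $\SL(r)$ computation via the Gieseker resolution and Exercise~\ref{ex:trivial}/\eqref{eq:23} is also right. But you identify and then leave open precisely the step that carries the mathematical content of the lemma for general type $ADE$: you reduce to counting the half-dimensional irreducible components of $p^{-1}(d\cdot 0)\cap\Bun[G]{d}$ and then say the count is not available, offering only the option of importing it from the level-one geometric Satake theorem of \cite{braverman-2007}. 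That is an acknowledged gap, not a proof.

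What the paper actually does at this point is different from both of your suggested escape routes and is worth knowing. After the reduction to $L=T$ (and further to the larger torus $T\times\CC^\times_{\rm hyp}$, so the fixed locus is a single point), the paper does not try to count components of the attracting fibre directly. Instead it uses a theorem of Laumon \cite{Laumon-chi}, which says that the Euler characteristic of the hyperbolic restriction stalk equals that of the ordinary restriction $i^*_{d\cdot 0}\IC(\Uh{d})$; combined with the perversity already established, this turns the unknown top Borel--Moore homology into a computable Euler characteristic. The ordinary stalk of $\IC(\Uh{d})$ at $d\cdot 0$ was computed in \cite[Theorem~7.10]{BFG}, and plugging that in gives $\dim U^d=\rank G$. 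Your alternative route via the $\ell=1$ Satake theorem of \cite{braverman-2007} would in the end give the same generating function $\prod_n(1-q^n)^{-\rank G}$ (as the paper notes, the two statements are tied together precisely by Laumon's theorem), so it is not circular and would be a legitimate substitute, but it is a much heavier import than the Laumon$+$BFG argument and you do not actually carry it through. So: the reduction is fine, the type $A$ case is fine, but you are missing the Laumon Euler-characteristic comparison that lets one trade the hyperbolic stalk for the ordinary stalk, and the BFG computation of the latter.
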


This result was proved as
\begin{enumerate}
      \item a reduction to the case $L=T$ via the associativity
    (\propref{prop:trans}),
      \item a reduction to a computation of the ordinary restriction
    to $S_{(d)}\AA^2$ thanks to a theorem of Laumon \cite{Laumon-chi},
    and
      \item the computation of the ordinary restriction to
    $S_{(d)}\AA^2$ in \cite[Theorem~7.10]{BFG}.
\end{enumerate}

Instead of explaining the detail of this argument, let us give two
explanations of $\dim U^d = \rank G$ for $L=T$ in the next two
subsections. In fact, we will also see that there is an isomorphism
$U^d \cong \mathfrak h$, the Cartan subalgebra of $G$. (This will be
actually one of our goal.)

\subsection{Hyperbolic restriction for Gieseker partial compactification}
\label{subsec:hyperb-restr-gies}

Let us suppose $G = \SL(r)$ and consider the hyperbolic restriction of
$\pi_*(\cC_{\Gi{d}})$. Let us suppose $L=T$ for brevity. We introduce
\begin{equation*}
    V^d \defeq \Hom(\cC_{S_{(d)}(\CC^2)}, \Phi^B_{T,G}(\pi_*(\cC_{\Gi{d}})))
\end{equation*}
as an analog of $U^d$. As in \eqref{eq:24}, we have
\begin{equation*}
    \bigoplus_d H^*_\TT(\Phi^B_{T,G}(\pi_*(\cC_{\Gi{d}})))
    \cong \bA_T\otimes_\CC\operatorname{Sym}
    \left(V^1\oplus V^2\oplus\cdots\right).
\end{equation*}

By the stable envelope, we have an isomorphism
\(
   \Phi^B_{T,G}(\pi_*(\cC_{\Gi{d}})) \cong \pi^T_*(\cC_{(\Gi{d})^T})
\)
(see \corref{cor:hyp-perverse}).
We decompose $\pi^T_*(\cC_{(\Gi{d})^T})$ according to the the
description \eqref{eq:2} of $(\Gi{d})^T$. One can easily check that a
direct summand has nonzero contribution to $V^d$ only when $d_1$,
\dots, $d_r$ are zero except one. That is $(d_1,\dots,d_r) =
(1,0,\dots,0)$, $(0,1,0,\dots,0)$, or $(0,\dots,0,1)$. Moreover
$\Hom(\cC_{S_{(d)(\CC^2)}}, \pi_*(\cC_{(\CC^2)^{[d]}}))$ is naturally
isomorphic to the fundamental class $[\pi^{-1}(d\cdot 0)]$ of the
fiber of $\pi$ at $d\cdot 0\in S_{(d)}(\CC^2)$. In particular, we have
a base $\{ e_1,\dots, e_r\}$ corresponding to summands
$(d_1,\dots,d_r) = (1,0,\dots,0)$, $(0,1,0,\dots,0)$, or
$(0,\dots,0,1)$, and $V^d$ is $r$-dimensional.

Moreover in the decomposition of $\pi_*(\cC_{\Gi{d}})$ in
\eqref{eq:23}, a direct summand contributes to $V^d$ only if $d' = d$
(and $\lambda=\emptyset$) or $\lambda = (d)$ (and $d' = 0$). Therefore
we have a natural direct sum decomposition
\begin{equation*}
    V^d = U^d \oplus \CC[\pi^{-1}(d\cdot 0)],
\end{equation*}
where $d\cdot 0$ is considered as a point in the stratum
$\Bun[\SL(r)]{0}\times S_{(d)}(\CC^2)$. In particular, we obtain
$\dim U^d = r - 1 = \rank \SL(r)$ as expected.

Note also that the class $[\pi^{-1}(d\cdot 0)]$ is $P^\Delta_{-d}([0])
|\mathrm{vac}\rangle = \ve_1\ve_2
P^\Delta_{-d}|\mathrm{vac}\rangle$. By \propref{prop:12.2.1} it is
equal to $e_1 + \dots + e_r$. Since $U^d$ corresponds to the subspace
killed by the Heisenberg operators, we also see
\begin{equation*}
    U^d \cong \left\{ a_1 e_1 + \dots + a_r e_r
      \middle| \sum a_i = 0 \right\}.
\end{equation*}
Thus $U^d$ is isomorphic to the Cartan subalgebra of $\algsl(r)$.
Furthermore for $r=2$, we have $F^-_{\rm{loc}} =
\bF_T\otimes_\CC\operatorname{Sym}(U^1\oplus U^2\oplus \cdots)$, where
$F^-_{\rm{loc}}$ is the Fock space of the Heisenberg operator $P_n^- =
P_n^{(1)} - P_n^{(2)}$ considered in the previous lecture. In fact, it
will be naturally considered as the Fock space for $\ve_1\ve_2 P_n^-$,
as $[\pi^{-1}(d\cdot 0)]$ is a cohomology class with compact support,
when we will consider integral forms in \secref{sec:W}.

\subsection{Affine Grassmannian for an affine Kac-Moody group}
\label{subsec:affine-grassm-an-affine}

In this subsection, we drop the assumption that $G$ is of type $ADE$.

Recall that it is proposed in \cite{braverman-2007} that $G$-instanton
moduli spaces on $\RR^4/\ZZ_\ell$ plays a role of affine Grassmannian
for an affine Kac-Moody group, as we mentioned in Introduction. Let us
give an interpretation of $\dim U^d = \rank G$ in this framework.

We restrict ourselves to the case $\ell = 1$. The main conjecture
in \cite{braverman-2007} is proved in this case.
Moreover there are no differences on instanton moduli spaces for
various $G$ with the common Lie algebra over $\RR^4$. Therefore we
expect that corresponding representations are of the Langlands dual
affine Lie \emph{algebra}. Let us denote it by
$\g_{\text{aff}}^\vee$. If $G$ is of type $ADE$, then it is the
untwisted affine Lie algebra of $\operatorname{Lie}G$. If $G$ is of
type $BCFG$, it is the Langlands dual of the untwisted affine Lie
algebra of $\operatorname{Lie}G$, hence the notation is
reasonable. Here the Langlands dual is just given by reversing arrows
in the Dynkin diagram. Concretely it is a twisted affine Lie algebra
given by the following table:

\begin{table*}
	\centering
    \begin{tabular}[h]{c|c}
        $G$ & the affine Lie algebra $\g_{\text{aff}}^\vee$ \\
        \hline
        $X_r$ ($X = ADE$) & $X^{(1)}_r$ \\
        $B_r$ & $A^{(2)}_{2r}$ \\
        $C_r$ & $D^{(2)}_{r+1}$ \\
        $F_4$ & $E_6^{(2)}$ \\
        $G_2$ & $D_4^{(3)}$
    \end{tabular}
\end{table*}
Here we follow \cite{Kac} for the notation of Lie algebras.

Since we are considering $\ell = 1$, our $\Uh{d}$ should correspond to
a level $1$ representation of $\g_{\text{aff}}^\vee$. It is known that all level
$1$ representations are related by Dynkin diagram automorphisms for
our choice of $\g_{\text{aff}}^\vee$ (e.g., rotations for $A^{(1)}_r$)
\cite[(12.4.5)]{Kac}. Therefore we could just take the $0$th
fundamental weight $\Lambda_0$ without loss of generality. In fact, we
do not have a choice of a highest weight in $\Uh{d}$, so the geometric
Satake could not make sense otherwise.

Let us consider $L(\Lambda_0)$, the irreducible representation of
$\g_{\text{aff}}^\vee$ with highest weight $\Lambda_0$.
The weight corresponding to $\Uh{d}$ is $\Lambda_0 - d\delta$. We only
have discrete parameter $d$ in the geometric side. Fortunately all
dominant weights of $L(\Lambda_0)$ are of this form, and the geometric
Satake for affine Kac-Moody group is formulated only for dominant
weights when \cite{braverman-2007} was written.

By \cite[Prop.~12.13]{Kac} we have
\begin{equation*}
    \operatorname{mult}_{L(\Lambda_0)}(\Lambda_0 - d\delta)
    = p^A(d),
\end{equation*}
where
\[
\sum_{d\ge 0} p^A(d)q^d = \prod_{n\ge 1} (1 -
q^n)^{-\operatorname{mult}n\delta}.
\]
Moreover $\operatorname{mult}n\delta = r$ if $\g_{\text{aff}}^\vee =
X^{(1)}_{r}$ for type $ADE$.
Next suppose $\g_{\text{aff}}^\vee$ is the Langlands dual of $X^{(1)}_{r}$
where $X$ is of type $BCFG$. Let $r^\vee$ be the lacing number of
$\g_{\text{aff}}^\vee$, i.e., the maximum number of edges connecting two
vertexes of the Dynkin diagram of $\g_{\text{aff}}^\vee$ (and
$X^{(1)}_{r}$). Then $\operatorname{mult}n\delta = r$ if $n$ is
a multiple of $r^\vee$, and equals to the number of long simple roots in
the finite dimensional Lie algebra $X_r$ otherwise. Explicitly
$r-1$ for $B_r$, $1$ for $C_r$ and $G_2$, and $2$ for
$F_4$. See \cite[Cor.~8.3]{Kac}.

Let us turn to the geometric side.
As we have explained in \subsecref{subsec:hyperb-restr-affine}, the
hyperbolic restriction with respect to a maximal torus corresponds to
a weight space. Our $T$ is a maximal torus of $G$, but not of the
affine Kac-Moody group. So let us consider a larger torus $\widetilde
T = T\times \CC^\times_{\rm hyp}$, where the second $\CC^\times_{\rm
  hyp}$ is a subgroup $(t,t^{-1})$ of $\CC^\times\times\CC^\times$
acting on $\CC^2$ preserving the symplectic form. There is only one
fixed point (for each $\Uh{d}$) with respect to $\widetilde T$, as
we have remarked at the end of \subsecref{subsec:exactness}.
Let us denote the corresponding hyperbolic restriction by
$\widetilde\Phi$. It is hyperbolic semi-small. (In fact, we proved the
property first for $\widetilde\Phi$, as we mentioned before.)
Therefore we expect
\begin{equation*}
    \widetilde\Phi(\IC(\Uh{d}))
\end{equation*}
is naturally isomorphic to a weight space of a level $1$
representation of $\g_{\text{aff}}^\vee$.
In our case, we have already computed this space implicitly in
\subsecref{subsec:calc-hyperb-restr} for type $ADE$. It is equal to
the degree $d$ part of the right hand side of \eqref{eq:24}. Otherwise
solve Exercise~\ref{ex:hyperb-example}.
When $G$ is of type $BCFG$, we have a different behavior because of
the mismatch of instanton numbers under the restriction, explained in
footnote~\ref{fnt:instanton_number}. One can check that it matches
with the above character formula.

For type $ADE$, the representation $L(\Lambda_0)$ has an explicit
realization by vertex operators (Frenkel-Kac construction)
\cite[\S14.8]{Kac}. The underlying vector space is given by
\begin{equation*}
    \operatorname{Sym}(\bigoplus_{d>0} z^{-d}\otimes\mathfrak h)
    \otimes \CC[Q],
\end{equation*}
where $\mathfrak h$ (resp.\ $Q$) is the Cartan subalgebra (resp.\ the
root lattice) of $\operatorname{Lie}G$. Therefore it is natural to
identify $U^d$ with $z^{-d}\otimes\mathfrak h$.


Let us remark that a conjecture (which is a theorem for $\ell=1$) in
\cite{braverman-2007} involves $IH^*(\Uh{d})$ instead of
$\widetilde\Phi(\IC(\Uh{d}))$. Two spaces have the same dimension
thanks to \cite{Laumon-chi} mentioned above, but the information of
the grading in $IH^*(\Uh{d})$ is lost in
$\widetilde\Phi(\IC(\Uh{d}))$. Therefore our formulation here is
simpler than \cite{braverman-2007}.

\subsection{Heisenberg algebra representation on localized equivariant
  cohomology}\label{subsec:localized}

Let us fix a Borel $B$, and consider the parabolic subgroups
$P_i\supset B$ for each vertex $i$ of the Dynkin diagram of $G$. Let
$L_i$ be the Levi factor.
By the associativity of the hyperbolic restriction
(\propref{prop:trans}), we have a natural decomposition
\begin{equation*}
   U^d_{T,G} \cong U^d_{T,L_i} \oplus U^d_{L_i,G}. 
\end{equation*}
Since $[L_i,L_i] \cong \SL(2)$, we apply the construction in
\subsecref{subsec:hyperb-restr-gies} to get a Fock space
representation on
$\bF_T\otimes_\CC\operatorname{Sym}(U^1_{T,L_i}\oplus
U^2_{T,L_i}\oplus\dots)$. Let us denote the Heisenberg operator by
$P^i_m$. We extend it to
$\bF_T\otimes_\CC\operatorname{Sym}(U_{T,G}^1\oplus
U_{T,G}^2\oplus\dots)$ by letting act trivially on $U^d_{L_i,G}$. So
we have $\rank G$ copies of Heisenberg generators action on
$\bF_T\otimes_\CC\operatorname{Sym}(U_{T,G}^1\oplus U_{T,G}^2\oplus\dots)$.

\begin{Proposition}
    We have
    \begin{equation}\label{eq:27}
        [P^i_m, P^j_n] = -m\delta_{m,-n}(\alpha_i,\alpha_j)\frac1{\ve_1\ve_2},
    \end{equation}
    where $\alpha_i$ is the simple root of $G$ corresponding to
    $P_i$. Therefore
    \begin{equation}
        \label{eq:32}
        \bigoplus_d H^*_\TT(\Phi^B_{T,G}(\IC(\Uh{d})))\otimes_{\bA_T}\bF_T
        \cong 
        \bF_T\otimes_\CC\operatorname{Sym}
        \left(U_{T,G}^1\oplus U_{T,G}^2\oplus\cdots\right)
    \end{equation}
    is naturally the Fock representation of the Heisenberg algebra
    associated with the Cartan subalgebra $\mathfrak h$ of $\g =
    \operatorname{Lie}G$.
\end{Proposition}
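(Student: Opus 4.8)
The plan is to reduce the commutation relation \eqref{eq:27} to a rank–two computation, read it off from the explicit form of the operators $P^i_m$ obtained in \subsecref{subsec:hyperb-restr-gies}, and then deduce the Fock–module statement formally from \lemref{lem:calc-Ud} and the identification $U^d_{T,G}\cong\h$ of \subsecref{subsec:calc-hyperb-restr}.

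\emph{Step 1: reduction to rank two.} First I would fix the standard Levi subgroup $L_{ij}\supset B$ of $G$ whose Dynkin diagram is the full sub-diagram on the vertices $\{i,j\}$, together with its parabolic $Q_{ij}$. By the associativity of hyperbolic restriction, \propref{prop:trans}, the functor $\Phi^{P_i}_{L_i,G}$ factors as $\Phi^{P_i\cap L_{ij}}_{L_i,L_{ij}}\circ\Phi^{Q_{ij}}_{L_{ij},G}$, and likewise for $j$; hence both $P^i_m$ and $P^j_n$ are obtained by transporting operators built inside $L_{ij}$ through one and the same functor $\Phi^{Q_{ij}}_{L_{ij},G}$. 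Since a commutator depends only on the operators themselves, it suffices to prove \eqref{eq:27} with $G$ replaced by $L_{ij}$ — equivalently by $[L_{ij},L_{ij}]$, as the central torus carries no instantons. For $G$ of type $ADE$ this leaves exactly two cases: $[L_{ij},L_{ij}]\cong\SL(2)\times\SL(2)$ if $i,j$ are not joined by an edge, and $[L_{ij},L_{ij}]\cong\SL(3)$ if they are.

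\emph{Step 2: the two rank-two cases.} In both cases we are in type $A$, so the ambient instanton moduli space is a (product of) Gieseker--Uhlenbeck space; by \corref{cor:hyp-perverse} and the analysis of \subsecref{subsec:hyperb-restr-gies}, the hyperbolic restriction to the $T$-fixed locus is governed by the Heisenberg operators $P^{(1)}_m,P^{(2)}_m,\dots$ of the Hilbert-scheme factors of \eqref{eq:2}, which satisfy the rank-one relations $[P^{(k)}_m,P^{(l)}_n]=-\delta_{kl}\,m\,\delta_{m,-n}/\ve_1\ve_2$ (recall $\langle 1,1\rangle=-1/\ve_1\ve_2$). The point to pin down is that, with a fixed pinning, the operator $P^i_m$ attached to a node that acts on the Hilbert-scheme factors labelled $k,k{+}1$ (in the order making $\alpha_i$ positive) equals $P^{(k)}_m-P^{(k+1)}_m$. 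Granting this, in the disconnected case $P^i,P^j$ act on disjoint factors and $[P^i_m,P^j_n]=0=-m\delta_{m,-n}(\alpha_i,\alpha_j)/\ve_1\ve_2$ since $(\alpha_i,\alpha_j)=0$; in the $\SL(3)$ case, with factors $(1,2)$ for $i$ and $(2,3)$ for $j$,
\begin{equation*}
  [P^i_m,P^j_n]=[P^{(1)}_m-P^{(2)}_m,\;P^{(2)}_n-P^{(3)}_n]=-[P^{(2)}_m,P^{(2)}_n]=\frac{m\,\delta_{m,-n}}{\ve_1\ve_2}=-\frac{m\,\delta_{m,-n}(\alpha_i,\alpha_j)}{\ve_1\ve_2},
\end{equation*}
as $(\alpha_i,\alpha_j)=-1$; and $i=j$ is precisely the $\SL(2)$ identity of \subsecref{subsec:hyperb-restr-gies}, giving $-2m\delta_{m,-n}/\ve_1\ve_2$, matching $(\alpha_i,\alpha_i)=2$. (Equivalently: once reduced to $L_{ij}$ one has $U^d_{T,L_{ij}}\cong\h_{[L_{ij},L_{ij}]}$ and $P^i_m$ is the Heisenberg operator in the $\alpha_i$-direction, so bilinearity together with the $i=j$ normalization gives \eqref{eq:27}.) The non-simply-laced cases follow by the same reduction once the mismatch of instanton numbers under restriction to a short-root $\SL(2)$ is taken into account.

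\emph{Step 3: the Fock-module statement, and the main obstacle.} By \lemref{lem:calc-Ud} and $U^d_{T,G}\cong\h$, the graded space \eqref{eq:32} is $\bF_T\otimes_\CC\operatorname{Sym}\bigl(\bigoplus_{d>0}z^{-d}\otimes\h\bigr)$, the underlying space of the Fock module for the Heisenberg algebra of $(\h,(\cdot,\cdot))$. The vacuum $1\in H^*_\TT(\Phi^B_{T,G}(\IC(\Uh{0})))$ lies in degree $0$, hence is killed by all $P^i_m$ with $m>0$. For each $d>0$ the classes $P^i_{-d}|\mathrm{vac}\rangle$ ($i=1,\dots,\rank G$) lie in the summand $U^d_{T,G}$ and their images under $U^d_{T,G}\cong\h$ are the simple roots $\alpha_i$, which form a basis of $\h$; so they span $U^d_{T,G}$, the creation operators $P^i_{-m}$ generate \eqref{eq:32} from the vacuum, and, together with \eqref{eq:27} and the graded-dimension count $\prod_{n>0}(1-q^n)^{-\rank G}$ coming from \eqref{eq:24}, this identifies \eqref{eq:32} with the Fock representation of the Heisenberg algebra attached to $\h$. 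I expect the genuine work to be concentrated in Step 2 — verifying that the operator produced by the stable-envelope / hyperbolic-restriction construction for the Levi $L_i$ really is, on the $T$-fixed locus and with consistent signs fixed by a choice of positive roots, the difference of the two relevant Hilbert-scheme Heisenberg operators; the rank-two reduction is routine given \propref{prop:trans}, and the commutator computation is elementary.
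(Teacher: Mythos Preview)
Your proposal is correct and follows essentially the same route as the paper: reduce via \propref{prop:trans} to the rank-two Levi $L_{i,j}$, treat the $\SL(2)\times\SL(2)$ and $\SL(3)$ cases by identifying $P^i_m$ with the difference $P^{(k)}_m-P^{(k+1)}_m$ of Hilbert-scheme Heisenberg operators, and deduce the Fock statement from \lemref{lem:calc-Ud} and irreducibility. The delicate point you flag at the end of Step~2 --- that the signs and identifications must be made consistently across the various $L_{i,j}$ --- is exactly the issue the paper singles out, pointing to the need for a polarization on instanton moduli spaces compatible with the polarizations for each $\Gi[3]{d}$; the paper does not resolve this in the text either, referring to \cite[\S6.2]{2014arXiv1406.2381B}. (Your aside on non-simply-laced $G$ is outside the scope of the paper, which assumes type $ADE$ throughout.)
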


\begin{proof}
Let us consider the commutator $[P^i_m, P^j_n]$. If $i=j$, it is the
$\SL(2)$-case. Hence it is just given by $[P^{(1)}_m - P^{(2)}_m,
P^{(1)}_n - P^{(2)}_n]$ in the notation in
\subsecref{subsec:R-matrix-as-a-Virasoro}.

For two distinct vertexes $i$, $j$, we consider the corresponding
parabolic subgroup $P_{i,j}$ with Levi factor $L_{i,j}$. We have
\(
   U^d_{T,G} \cong U^d_{T,L_{i,j}} \oplus U^d_{L_{i,j},G}.
\)
Since $[L_{i,j},L_{i,j}]$ is either $\SL(3)$ or $\SL(2)\times \SL(2)$.
For $\SL(2)\times\SL(2)$, the commutator is clearly $0$.
For $\SL(3)$, we consider $\bigoplus H^{[*]}_\TT((\Gi[3]{d})^T)$ as
the tensor product of three copies of Fock space as in
\subsecref{subsec:R-matrix-as-a-Virasoro}. Then we have three
Heisenberg operators $P_m^{(1)}$, $P_m^{(2)}$, $P_m^{(3)}$ and
$P^\Delta_m = P^{(1)}_m + P^{(2)}_m + P^{(3)}_m$. It is also clear
that $P^i_m = P^{(1)}_m - P^{(2)}_m$, $P^j_m = P^{(2)}_m - P^{(3)}_m$
by a consideration of the associativity of the hyperbolic restriction.

Hence we get \eqref{eq:27}. In fact, there is a delicate point
here. We need to choose a polarization for instanton moduli spaces so
that it is compatible with a polarization for $\Gi[3]{d}$ for each
$L_{i,j}$. See \cite[\S6.2]{2014arXiv1406.2381B} for detail.

Finally the size of \eqref{eq:32} is the same as that of the Fock
representation by \lemref{lem:calc-Ud}. Since the Fock representation
is irreducible, we conclude \eqref{eq:32} is the Fock representation.
\end{proof}

\section{\texorpdfstring{${\scW}$}{W}-algebra representation on equivariant intersection cohomology groups}
\label{sec:W}

The goal of this final lecture is to explain a construction of
$\scW$-algebra representation on $\bigoplus_d
IH^{*}_\TT(\Uh{d})$. In fact, if we assume level is \emph{generic},
or take $\otimes_{\bA_T}\bF_T$ in equivariant cohomology, we have
already achieved it. This is because the $\mathscr W$-algebra is known
to be a (vertex) subalgebra of the Heisenberg (vertex) algebra at
generic level by Feigin-Frenkel (see \cite[Ch.~15]{F-BZ}). So we just
restrict the Heisenberg algebra representation on $\bigoplus_d
H^*_\TT(\Phi^B_{T,G}(\IC(\Uh{d})))$ to $\scW$, and use the
localization theorem
$H^*_\TT(\Phi^B_{T,G}(\IC(\Uh{d})))\otimes_{\bA_T}\bF_T \cong
IH^*_\TT(\Uh{d})\otimes_{\bA_T}\bF_T$.  But this is not an interesting
assertion, as it does not explain any geometric meaning of the
$\scW$-algebra.

One way to explain such a meaning is to prove the Whittaker conditions
are satisfied for fundamental classes $[\Uh{d}]\in IH^0_\TT(\Uh{d})$
as in \cite[\S8]{2014arXiv1406.2381B}. We will go a half way towards
this goal, namely we will explain a construction of a representation
of an integral form of $\scW$-algebra on $\bigoplus_d
IH^*_\TT(\Uh{d})$. The remaining half requires an introduction of
generating fields (denoted by $W_i$ in \cite{F-BZ} and by
$\widetilde{W}^{(\kappa)}$ in \cite{2014arXiv1406.2381B}), which is
purely algebraic, and hence is different from the theme of lectures.

We think that the integral form itself is an important object, as we
can recover arbitrary level (and highest weight) representation by a
specialization $\bA_T\to \CC$. Thus highest weights and level are
identified with equivariant variables.
This will be supposed to be discussed in future.

The $\scW$-algebra is defined by the quantum Drinfeld-Sokolov
reduction, that is the cohomology of the so-called BRST complex
associated with an affine Lie algebra \cite[Ch.~15]{F-BZ}. This
definition is algebraic, and will not be recalled here. The integral
form mentioned above is also defined by the BRST complex. Therefore we
will treat its characterization as a subalgebra of the Heisenberg
algebra (\thmref{thm:FF}) as a black box, and regard that it is
defined in this way.

\subsection{Four types of nonlocalized equivariant cohomology groups}

We consider cohomology groups with compact and arbitrary support with
coefficients in $\IC(\Uh{d})$ and its hyperbolic restriction
$\Phi_{T,G}(\IC(\Uh{d}))$. So we consider four types of nonlocalized
equivariant cohomology groups. They are related by natural adjunction
homomorphisms as
\begin{equation}\label{eq:26}
    \begin{multlined}[.8\textwidth]
    \bigoplus_d IH^*_{\TT,c}(\Uh{d}) \to
    \bigoplus_d H^*_{\TT,c}(\Phi^B_{T,G}(\IC({\Uh{d}})))
\\
    \to
    \bigoplus_d H^*_{\TT}(\Phi^B_{T,G}(\IC({\Uh{d}})))
    \to \bigoplus_d IH^*_{\TT}(\Uh{d}).
    \end{multlined}
\end{equation}
(See \eqref{eq:25}.) One can show that they are free $\bA_T$-modules,
and homomorphisms are inclusion, which become isomorphisms over $\bF_T$
(\cite[\S6]{2014arXiv1406.2381B}).

Moreover, we have a natural Poincar\'e pairing between the first and
fourth cohomology groups. When we compare it with Kac-Shapovalov form
so that it is a pairing between the Verma module with highest weight
$\boldsymbol a$ and the dual of the Verma module with highest weight
$-\boldsymbol a$, we need to make a certain twist of this pairing. In
particular, the pairing is sesquilinear in equivariant variables. See
\cite[\S6.8]{2014arXiv1406.2381B}. Let us ignore this point, as we
will not discuss details of highest weights. The pairing between the
second and third can be defined if we replace one of the hyperbolic
restriction by the opposite one $\Phi^{B_-}_{T,G}$ as $D p_* j^! = p_!
j^* D = p_{-*}j_-^!$ by \thmref{thm:Braden}. This is compensated by
the twist above, hence we have a pairing between the second and third.

Let $\widetilde{P}^i_n = \ve_1\ve_2 P^i_n = P^i_n([0])$ be the
Heisenberg operator associated with the fundamental class of the
origin $0\in\CC^2$. This operator is always well-defined on the middle
two cohomology groups in \eqref{eq:26}. It satisfies
\begin{equation*}
    [\widetilde{P}^i_m, \widetilde{P}^i_n] =
    -m\delta_{m,-n}(\alpha_i,\alpha_j)\ve_1\ve_2.
\end{equation*}
Note that the right hand side is a polynomial in $\ve_1$, $\ve_2$
contrary to \eqref{eq:27}.
The pairing is invariant.

Let $\Heis_\bA(\mathfrak h)$ denote the algebra over $\bA \defeq
\CC[\ve_1,\ve_2]$ generated by $\widetilde{P}^i_n$ with the above
relations. It is an $\bA$-form of the Heisenberg algebra.
Two middle cohomology groups in \eqref{eq:26} are $\Heis_\bA(\mathfrak
h)$-modules dual to each other.

Let us look at \eqref{eq:24}. The isomorphism uses the identification
$H^*_\TT(\overline{S_\lambda\CC^2})\cong\bA_T$, which is given by the
fundamental class $[\overline{S_\lambda\CC^2}]$. It is not compatible
with $\widetilde{P}^i_n$, but
$H^*_{\TT,c}(\overline{S_\lambda\CC^2})\cong\bA_T$ does as it is given
by the fundamental class $[0]$. Therefore we should consider the
cohomology group with compact support. We get

\begin{Proposition}
    $\bigoplus_d H^*_{\TT,c}(\Phi^B_{T,G}(\IC({\Uh{d}})))$ is a highest
    weight module of $\Heis_\bA(\mathfrak h)$ with the highest weight
    vector $|\mathrm{vac}\rangle$, i.e.,
    \begin{equation*}
        \bigoplus_d H^*_{\TT,c}(\Phi^B_{T,G}(\IC({\Uh{d}})))
        \cong \Heis_\bA(\mathfrak h) |\mathrm{vac}\rangle.
    \end{equation*}
\end{Proposition}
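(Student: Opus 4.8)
The plan is to show that the $\bA_T$-submodule $M'\subseteq M:=\bigoplus_d H^*_{\TT,c}(\Phi^B_{T,G}(\IC(\Uh{d})))$ generated by $|\mathrm{vac}\rangle$ under the operators $\widetilde P^i_{-n}$ ($n>0$, $i$ a vertex) is all of $M$. The first step is to record the compact-support analogue of \eqref{eq:24}. Since $L=T$, the decomposition of $\Phi^B_{T,G}(\IC(\Uh{d}))$ from \subsecref{subsec:calc-hyperb-restr} keeps only the $d'=0$ terms, so it is a direct sum of summands $\IC(\overline{S_\lambda(\CC^2)},(U^1)^{\otimes\alpha_1}\otimes\cdots)$ with $|\lambda|=d$. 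Now $\overline{S_\lambda(\CC^2)}$ receives a finite, generically bijective morphism from $\prod_k S^{\alpha_k}(\CC^2)=\prod_k\CC^{2\alpha_k}/S_{\alpha_k}$, so $\IC$ of it with trivial coefficients pushes forward from a shifted constant sheaf on a contractible cone, whence $H^*_{\TT,c}$ is free of rank one over $\bA_T$ with generator the point class $[0]$; while $H^*_{\TT,c}$ of a nontrivial local system vanishes (by Verdier duality from the arbitrary-support statement \cite[Lemma~4.8.7]{2014arXiv1406.2381B}). Therefore
\begin{equation*}
    M\;\cong\;\bA_T\otimes_\CC\Sym\!\big(U^1\oplus U^2\oplus\cdots\big)
\end{equation*}
as $\bA_T$-modules, the $\bA_T$-factor being generated by $[0]$-classes; in particular $M$ is free over $\bA_T$. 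I filter $M$ by the closure order of the strata of the symmetric products (equivalently, by $\Sym$-degree), identifying the associated graded with the right-hand side.

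The second and main step is the integral statement that $\{\widetilde P^i_{-d}|\mathrm{vac}\rangle\}_{i=1}^{\rank G}$ is an $\bA_T$-basis of $U^d$ (embedded in $M$ via the $[0]$-class of the top summand). By the Proposition of \subsecref{subsec:localized}, $\Sym(U^1\oplus\cdots)$ with the $P^i_m$ is the Fock representation of the Heisenberg algebra of $\h$, so up to a unit $P^i_{-d}|\mathrm{vac}\rangle$ is the oscillator in degree $d$ attached to the simple root $\alpha_i$ under $U^d\cong\h$. The point is that this identification is compatible with the integral structure carried by the $[0]$-classes: by the explicit computation of \subsecref{subsec:hyperb-restr-gies}, applied to each $\SL(2)$-Levi $[L_i,L_i]$, the class $\widetilde P^i_{-d}|\mathrm{vac}\rangle=P^i_{-d}([0])|\mathrm{vac}\rangle$ is the difference of the fundamental classes $[\pi^{-1}(d\cdot 0)]$ in two consecutive fixed-point factors, hence corresponds to a simple root viewed inside the root lattice. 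Since $G$ is almost simple the simple roots form a $\ZZ$-basis of the root lattice, and $\dim U^d=\rank G$ by \lemref{lem:calc-Ud}; so these $\rank G$ classes form an $\bA_T$-basis of $U^d$.

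The final step assembles these. Over $\bF_T$ the four cohomology groups in \eqref{eq:26} coincide, so $M\otimes_{\bA_T}\bF_T$ is the irreducible Fock module on which $P^i_{-n}=\widetilde P^i_{-n}/(\ve_1\ve_2)$ act by multiplication; passing to the associated graded of the filtration above, the symbol of $\widetilde P^i_{-n}$ on $\gr M\cong\bA_T\otimes\Sym(\bigoplus_d U^d)$ is multiplication by the leading term of $\widetilde P^i_{-n}|\mathrm{vac}\rangle$, which lies in $U^n$. Hence the symbol of a monomial $\widetilde P^{i_1}_{-n_1}\cdots\widetilde P^{i_k}_{-n_k}|\mathrm{vac}\rangle$ is the product of these leading terms; letting the $(i,d)$ range over all choices, Step~2 shows these products $\bA_T$-span every $\Sym^k$, hence all of $\gr M$. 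Therefore $\gr M'=\gr M$, and since the filtration is exhaustive and bounded in each $\Uh{d}$-degree, $M'=M$, i.e.\ $M=\Heis_\bA(\h)|\mathrm{vac}\rangle$; in fact the monomials above furnish a PBW-type $\bA_T$-basis, so $M$ is even free over $\Heis_\bA(\h)$.

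I expect the delicate point to be Step~2: tracking the integral structure so that the $\widetilde P^i_{-d}|\mathrm{vac}\rangle$ span $U^d$ over $\bA_T$ and not merely over $\bF_T$ — this is exactly what forces the use of \emph{compact}-support cohomology and the point classes $[0]$ in place of the fundamental classes $[\overline{S_\lambda(\CC^2)}]$ used in \eqref{eq:24}. A secondary point needing care is checking, in Step~3, that the creation operators $\widetilde P^i_{-n}$ have multiplication symbols with no $\ve_1\ve_2$ in the denominator, so that the symbol calculus stays inside the $\bA_T$-lattice $M$.
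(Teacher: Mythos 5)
Your proposal is correct and follows the route the paper indicates: the discussion preceding the proposition already singles out the $[0]$-normalization in compact-support cohomology as the key to the $\bA_T$-integral statement (your Step~2), and the surjectivity onto $M$ is what \cite[Prop.~8.1.11]{2014arXiv1406.2381B} encapsulates via ``comparing graded dimensions'' — which your filtration/symbol argument in Step~3 makes precise, and which is the honest content behind that phrase, since a rank count over $\bF_T$ alone does not force surjectivity of an inclusion of free $\bA_T$-lattices. The one point you leave implicit in Step~2 is the $\CC$-linear independence of the $\rank G$ classes $\widetilde{P}^i_{-d}|\mathrm{vac}\rangle$ inside $U^d$; this follows by pairing against the annihilation operators $\widetilde{P}^j_{d}$ and using the nondegeneracy of the Cartan matrix in $[\widetilde{P}^j_m,\widetilde{P}^i_{-m}]=-m(\alpha_j,\alpha_i)\ve_1\ve_2$, after which $\dim U^d=\rank G$ from Lemma~\ref{lem:calc-Ud} closes the basis claim.
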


\subsection{Integral form of Virasoro algebra}\label{subsec:integr-form-viras}

Consider the case $G=\SL(2)$. Recall that we can normalize Virasoro
operators so that they act on non-localized equivariant cohomology as
we have discussed after \thmref{thm:14.2.3}. More precisely, as we set
$\widetilde{P}^i_n = \ve_1\ve_2 P^i_n$, it is natural to introduce
$\widetilde{L}_n = \ve_1\ve_2 L_n$. Then it is a well-defined operator
on $IH^{*}_{\TT}(\Uh[\SL(2)]{d})$ and $IH^{*}_{\TT,c}(\Uh[\SL(2)]{d})$
by \thmref{thm:14.2.3}.

The relation \eqref{eq:28} is modified to
\begin{equation*}
    [\widetilde{L}_m, \widetilde{L}_n] = 
    \ve_1\ve_2\left\{ (m-n) \widetilde{L}_{m+n} + 
      \left(\ve_1\ve_2 + 6(\ve_1+\ve_2)^2\right)
      \delta_{m,-n}\frac{m^3-m}{12}\right\}.
\end{equation*}
The relation is defined over $\CC[\ve_1,\ve_2]$. Therefore we have the
integral form of the Virasoro algebra defined over $\CC[\ve_1,\ve_2]$.
Let us denote it by $\Vir_\bA$. We have an embedding
$\Vir_\bA\subset\Heis_\bA(\mathfrak h_{\algsl_2})$, where $\mathfrak
h_{\algsl_2}$ is the Cartan subalgebra of $\algsl_2$.

\begin{Proposition}
    $IH^{*}_{\TT,c}(\Uh[\SL(2)]{d})$ is a highest weight
    module with the highest weight vector $|\mathrm{vac}\rangle$.
\end{Proposition}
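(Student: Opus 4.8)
The plan is to run the $\SL(2)$ case of the general construction, now over the integral ring $\bA = \CC[\ve_1,\ve_2]$, and to verify the three requirements for a highest weight module. The $\Vir_\bA$-action is essentially already in hand: by the discussion following \thmref{thm:14.2.3}, the rescaled operators $\widetilde L_n = \ve_1\ve_2 L_n$ act on $\bigoplus_d IH^{*}_{\TT,c}(\Uh[\SL(2)]{d})$, because for $n\le 0$ they are normal-ordered quadratic expressions in $\widetilde P^-_m = \ve_1\ve_2 P^-_m = P^-_m([0])$ (which act on compact-support cohomology), while for $n>0$ one has $n\widetilde L_n = [\Delta c_1(\cV),\ve_1\ve_2 P^\Delta_n]$ restricted to the common kernel of the $\ve_1\ve_2 P^\Delta_m$ $(m>0)$, and that kernel is precisely $\bigoplus_d IH^{*}_{\TT,c}(\Uh[\SL(2)]{d})$ by Exercise~\ref{ex:trivial}; the relations are the $\bA$-form of \eqref{eq:28} from \subsecref{subsec:integr-form-viras}, obtained by multiplying \eqref{eq:28} through by $\ve_1\ve_2$. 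Next I would check that $|\mathrm{vac}\rangle = 1_{\Uh[\SL(2)]{0}}\in IH^{0}_{\TT,c}(\Uh[\SL(2)]{0})$ (a point) is a highest weight vector: $\widetilde L_n|\mathrm{vac}\rangle = 0$ for $n>0$, and $\widetilde L_0|\mathrm{vac}\rangle = -\frac14\bigl((a_1-a_2)^2-(\ve_1+\ve_2)^2\bigr)|\mathrm{vac}\rangle$, the highest weight lying in $\bA_T$. Both relations hold after $\otimes_{\bA_T}\bF_T$ by \eqref{eq:29}, hence integrally, since $IH^{0}_{\TT,c}(\Uh[\SL(2)]{0}) = \bA_T$ is torsion free.

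The remaining, substantive point is that $\bigoplus_d IH^{*}_{\TT,c}(\Uh[\SL(2)]{d})$ is \emph{generated} over $\Vir_\bA$ by $|\mathrm{vac}\rangle$. After $\otimes_{\bA_T}\bF_T$ the module becomes $F^-_\loc$, which by \thmref{thm:reflection} and \subsecref{subsec:R_minimal} is the Virasoro Verma module at the generic highest weight of \eqref{eq:29}, hence is cyclic on $|\mathrm{vac}\rangle$; so $\Vir_\bA|\mathrm{vac}\rangle$ is already a full-rank $\bA_T$-submodule. To upgrade this to an equality over the non-localized ring I would use the inclusions \eqref{eq:26} to embed $\bigoplus_d IH^{*}_{\TT,c}(\Uh[\SL(2)]{d})$ into $\bigoplus_d H^{*}_{\TT,c}(\Phi^B_{T,\SL(2)}(\IC(\Uh{d}))) = \Heis_\bA(\mathfrak h_{\algsl_2})|\mathrm{vac}\rangle$ (the preceding Proposition, with $\dim U^d = 1$ from \lemref{lem:calc-Ud}); since $\widetilde L_n$ acts compatibly on both sides, the image is a $\Vir_\bA$-submodule of the integral Heisenberg Fock module containing $\Vir_\bA|\mathrm{vac}\rangle$, and what is left is the reverse inclusion: that the image of $IH^{*}_{\TT,c}(\Uh{d})$ in $H^{*}_{\TT,c}(\Phi(\IC(\Uh{d})))$ is \emph{exactly} the image of the integral Virasoro Verma module under the Feigin--Fuchs embedding, not merely up to inverting $\ve_1\ve_2$.

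This last identification over $\bA_T$ is the step I expect to be the main obstacle. Once $\ve_1\ve_2$ is inverted everything is formal (localization together with the generic Feigin--Fuchs isomorphism), so the genuine content of the integral form is the control of ``denominators'' here; already in degree $1$ one has $\widetilde L_{-1}|\mathrm{vac}\rangle = -\frac12\widetilde P^-_0\,\widetilde P^-_{-1}|\mathrm{vac}\rangle$, a non-unit $\bA_T$-multiple of the Heisenberg generator, so the assertion forces the image of $IH^{*}_{\TT,c}(\Uh[\SL(2)]{1})$ to be exactly that multiple, which must be matched against an honest computation of the hyperbolic restriction near the singular point of $\Uh[\SL(2)]{1} = \CC^2\times\overline{\mathcal O}_{\min}$. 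I would carry this out using the sesquilinear Poincar\'e pairing between the first and fourth cohomology groups of \eqref{eq:26} (see \cite[\S6.8]{2014arXiv1406.2381B}), which pins the image down up to the correct twist of highest weights. Alternatively, the generation follows from the Whittaker characterization of the fundamental classes $[\Uh{d}]$ as in \cite[\S8]{2014arXiv1406.2381B}, a route these lectures deliberately leave aside.
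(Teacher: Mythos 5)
Your setup is correct, and you have rightly isolated the substantive issue: once the $\Vir_\bA$-action and the highest-weight conditions $\widetilde L_n|\mathrm{vac}\rangle = 0$ $(n>0)$, $\widetilde L_0|\mathrm{vac}\rangle = -\frac14\bigl((a_1-a_2)^2-(\ve_1+\ve_2)^2\bigr)|\mathrm{vac}\rangle$ are in place, the remaining content is the cyclicity
\[
IH^{*}_{\TT,c}(\Uh[\SL(2)]{d}) \;=\; \Vir_\bA|\mathrm{vac}\rangle.
\]
Your degree-$1$ computation $\widetilde L_{-1}|\mathrm{vac}\rangle = -\tfrac12\widetilde P^-_0\widetilde P^-_{-1}|\mathrm{vac}\rangle$ is also correct, and it correctly points at the phenomenon: the $\Vir_\bA$-submodule of the Heisenberg Fock module sits inside with a nontrivial ``index''. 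But none of the three routes you sketch (a local hyperbolic-restriction computation near the singular point of $\CC^2\times\overline{\mathcal O}_{\min}$; the sesquilinear Poincar\'e pairing of \cite[\S6.8]{2014arXiv1406.2381B}; the Whittaker characterization) is carried out, so the generation step --- which is the entire content of the proposition --- remains open in your write-up.

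The paper closes this with a \emph{graded-dimension count over $\CC$} (quoting \cite[Prop.~8.1.11]{2014arXiv1406.2381B}), which is both shorter and, I think, the point you were circling. Everything in sight --- $\Vir_\bA|\mathrm{vac}\rangle$, $IH^{*}_{\TT,c}(\Uh[\SL(2)]{d})$, and $H^{*}_{\TT,c}(\Phi^B_{T,\SL(2)}(\IC(\Uh{d})))$ --- is a graded $\CC$-vector space (by cohomological degree, and by $d$) with finite-dimensional graded pieces, and the chain of maps preserves the grading. So an inclusion with matching Poincar\'e series is an equality, with no need to exhibit any identification of lattices by hand. The ``denominator'' $\widetilde P^-_0 = a_1 - a_2 - (\ve_1+\ve_2)$ that worried you is exactly what makes this work: it has positive cohomological degree, so $\widetilde L_{-1}|\mathrm{vac}\rangle$ lands in a \emph{strictly higher} degree than the Heisenberg generator $\widetilde P^-_{-1}|\mathrm{vac}\rangle$, and the Poincar\'e series of $\Vir_\bA|\mathrm{vac}\rangle$ (computed from PBW monomials, which are $\bA_T$-independent after localization) comes out equal to that of $IH^{*}_{\TT,c}(\Uh[\SL(2)]{d})$ rather than to that of the ambient Fock module. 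In other words, the graded-dimension argument automatically ``sees'' the correct sublattice, whereas your approach tries to identify it by a pointwise geometric computation. Your plan is not wrong in spirit, but it substitutes a harder computation for a softer counting argument, and as written leaves the crucial step as a declared obstacle rather than a proof.
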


We already know that $|\mathrm{vac}\rangle$ is killed by
$\widetilde{L}_n$. The highest weight is computed in \eqref{eq:29}.
Hence we need to check 
\(
    IH^{*}_{\TT,c}(\Uh[\SL(2)]{d}) = \Vir_\bA  |\mathrm{vac}\rangle.
\)
This is done by comparing graded dimensions of both sides
(\cite[Prop.~8.1.11]{2014arXiv1406.2381B}). (This argument works for general $G$.)

We call $IH^{*}_{\TT,c}(\Uh[\SL(2)]{d})$ the \emph{universal Verma
  module}, as it is specialized to the Verma module in the usual sense
by a specialization ${\bA_T}\to \CC$. Then
$IH^{*}_{\TT}(\Uh[\SL(2)]{d})$ is the dual universal Verma module.

\subsection{Integral form of \texorpdfstring{${\scW}$}{W}-algebra}
\label{subsec:integr-form}
Let us take the parabolic subgroup $P_i$ and its Levi factor $L_i$ as
in \subsecref{subsec:localized}. Then we have the corresponding
integral Virasoro algebra as a subalgebra in $\Heis_\bA(\mathfrak
h)$. Let us denote it by $\Vir_{i,\bA}$. Let
$\Heis_\bA(\alpha_i^\perp)$ denote the integral Heisenberg algebra for
the root hyperplane $\alpha_i = 0$ corresponding to $L_i$. Then
$\Vir_{i,\bA}\otimes_\bA \Heis_\bA(\alpha_i^\perp)$ is an
$\bA$-subalgebra of $\Heis_\bA(\mathfrak h)$. More precisely we need
to consider them as \emph{vertex algebras}, but we ignore this
point. Then an integral version of Feigin-Frenkel's result is

\begin{Theorem}[\protect{\cite[Th.~B.6.1]{2014arXiv1406.2381B}}]\label{thm:FF}
    Let $\scW_\bA(\g)$ be an $\bA$-form of the $\scW$-algebra defined
    by an $\bA$-form of the BRST complex. Then
    \begin{equation*}
        \scW_\bA(\g) \cong \bigcap_i \Vir_{i,\bA}\otimes_\bA
        \Heis_\bA(\alpha_i^\perp),
    \end{equation*}
    where the intersection is taken in $\Heis_\bA(\mathfrak h)$.
\end{Theorem}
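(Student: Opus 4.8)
The plan is to reduce the statement to the classical Feigin--Frenkel theorem over the fraction field $\bF = \CC(\ve_1,\ve_2)$ and then control the integral structure over $\bA = \CC[\ve_1,\ve_2]$. Recall that at generic level Feigin--Frenkel realize $\scW_\bF(\g)$ as a vertex subalgebra of the Heisenberg vertex algebra $\Heis_\bF(\mathfrak h)$ via the Miura map, and identify its image with $\bigcap_i \ker Q_i$, where $Q_i$ is the $i$-th screening operator, the residue of the screening current built from $\alpha_i$ (see \cite[Ch.~15]{F-BZ}). The rank-one input is that $Q_i$ acts only through the $\CC\alpha_i$-factor in $\Heis_\bF(\mathfrak h) = \Heis_\bF(\CC\alpha_i)\otimes_\bF \Heis_\bF(\alpha_i^\perp)$, that it commutes with $\Heis_\bF(\alpha_i^\perp)$, and that its kernel on $\Heis_\bF(\CC\alpha_i)$ is exactly the Feigin--Fuchs Virasoro subalgebra $\Vir_{i,\bF}$; hence $\ker Q_i = \Vir_{i,\bF}\otimes_\bF \Heis_\bF(\alpha_i^\perp)$ and the asserted identity holds after applying $\otimes_\bA\bF$.

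First I would show that $\scW_\bA(\g)$ is a free $\bA$-module, that the $\bA$-BRST complex has cohomology concentrated in cohomological degree $0$, and that $\scW_\bA(\g)\otimes_\bA\bF = \scW_\bF(\g)$. For this one uses the PBW/charge filtration on the $\bA$-BRST complex whose associated graded complex computes the \emph{classical} (Poisson) $\scW$-algebra; the latter is a polynomial $\bA$-algebra, in particular free over $\bA$ and concentrated in degree $0$. A standard spectral sequence argument, carried out conformal weight by conformal weight where each graded piece is a bounded complex of finite free $\bA$-modules, transfers freeness and degree-$0$ concentration to $\scW_\bA(\g)$ itself. The same analysis shows that the Miura map is defined over $\bA$, is injective (being injective over $\bF$ on a free $\bA$-module), and that $\scW_\bA(\g)$ is \emph{saturated} in $\Heis_\bA(\mathfrak h)$, i.e.
\[
   \scW_\bA(\g) = \scW_\bF(\g)\cap \Heis_\bA(\mathfrak h),
\]
since the quotient $\Heis_\bA(\mathfrak h)/\scW_\bA(\g)$ is torsion-free, again visible on the associated graded where it becomes a polynomial ring.

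Next I would treat the rank-one statement integrally: that the screening operator $Q_i$ preserves the $\bA$-lattice $\Heis_\bA(\CC\alpha_i)$, commutes with $\Heis_\bA(\alpha_i^\perp)$, and has kernel exactly $\Vir_{i,\bA}$, the integral Virasoro of \subsecref{subsec:integr-form-viras} generated by the rescaled generators $\widetilde L_n$. The point is that the Feigin--Fuchs formula \eqref{eq:18}, multiplied through by $\ve_1\ve_2$, has coefficients in $\bA$ (exactly as used for $G=\SL(2)$), so $\Vir_{i,\bA}\subseteq \ker Q_i$; conversely one checks weight by weight that $\ker(Q_i)\cap\Heis_\bA(\CC\alpha_i)$, a saturated $\bA$-submodule of a finite free module in each conformal weight, has the same rank as, and contains, $\Vir_{i,\bA}$, hence equals it. Consequently $\ker Q_i = \Vir_{i,\bA}\otimes_\bA \Heis_\bA(\alpha_i^\perp)$ inside $\Heis_\bA(\mathfrak h)$.

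Finally I would assemble the two inclusions. On one hand, the screening operators annihilate the image of the $\bA$-BRST cohomology under the Miura map, since that is how they arise in the Feigin--Frenkel construction and the construction is available over $\bA$; hence $\scW_\bA(\g)\subseteq \bigcap_i \ker Q_i = \bigcap_i \Vir_{i,\bA}\otimes_\bA\Heis_\bA(\alpha_i^\perp)$. On the other hand, any element of this intersection lies in $\scW_\bF(\g)\cap\Heis_\bA(\mathfrak h)$ by the generic case, which equals $\scW_\bA(\g)$ by the saturation established above (intersections of finitely many submodules commute with the flat base change $\bA\to\bF$). I expect the main obstacle to be precisely the integral bookkeeping: both the absence of torsion in the $\bA$-BRST cohomology and the identification of the rank-one kernel with $\Vir_{i,\bA}$ are not formal consequences of the $\bF$-theory but require an honest comparison of $\bA$-lattices, and the cleanest route is to push everything down to the classical limit where the relevant objects become polynomial algebras. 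A complete argument is given in \cite[Th.~B.6.1]{2014arXiv1406.2381B}.
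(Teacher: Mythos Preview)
The paper does not prove this theorem. It is quoted from the companion paper \cite[Th.~B.6.1]{2014arXiv1406.2381B} and explicitly treated as a black box: the text immediately before the statement says ``we will treat its characterization as a subalgebra of the Heisenberg algebra (\thmref{thm:FF}) as a black box,'' and the text immediately after says ``For our purpose, it is enough to consider the right hand side of the isomorphism in \thmref{thm:FF} as a definition of $\scW_\bA(\g)$.'' So there is no proof here to compare your proposal against.

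Your sketch is a reasonable outline of the strategy actually used in the appendix of \cite{2014arXiv1406.2381B}: pass to the fraction field to invoke the classical Feigin--Frenkel screening description, and then argue integrally via freeness/saturation of the $\bA$-BRST cohomology and the rank-one identification $\ker Q_i = \Vir_{i,\bA}\otimes_\bA\Heis_\bA(\alpha_i^\perp)$. You correctly identify the genuine work as the integral bookkeeping (torsion-freeness, saturation, the exact match of the rank-one kernel with the rescaled Virasoro lattice), and you already defer the details to the same reference the paper cites. Since these lecture notes do not attempt that argument, your proposal is as much as one can say here.
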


The original version states the same result is true for \emph{generic}
level, and can be deduced from above by taking $\otimes_\bA
\CC(\ve_1,\ve_2)$. The level $k$ in the usual approach is related to
$\ve_1$, $\ve_2$ by the formula
\begin{equation*}
    k + h^\vee = -\frac{\ve_2}{\ve_1},
\end{equation*}
where $h^\vee$ is the dual Coxeter number of $\g = \operatorname{Lie}G$.

As mentioned above, we will not review the definition of the BRST
complex here. For our purpose, it is enough to consider the right hand side of the isomorphism in \thmref{thm:FF} as a definition of $\scW_\bA(\g)$.

\begin{Remark}
    Two equivariant variables $\ve_1$, $\ve_2$ are symmetric in the
    geometric context. On the other hand, $\ve_1$ and $\ve_2$ play
    very different role in the $\bA$-form on the BRST complex
    above. On the other hand, $\Vir_\bA$ and $\Heis_\bA$ have symmetry
    $\ve_1\leftrightarrow \ve_2$. Therefore the theorem implies the
    symmetry on $\scW_\bA(\g)$. This symmetry is nontrivial, and is
    called the Langlands duality of the $\scW$-algebra. (If $\g$ is
    not of type $ADE$, $\g$ must be replaced by its Langlands dual, as
    the Heisenberg commutation relation involves the inner product
    $(\alpha_i,\alpha_j)$.) See \cite[Ch.~15]{F-BZ}.

    Note also the symmetry is apparent in the geometric context, but
    it is so only for type $ADE$, and is not clear for other
    types. (To realize $\scW_\bA(\g)$, one should use instanton moduli
    spaces for \emph{twisted} affine Lie algebras. Then the roles of
    $\ve_1$, $\ve_2$ are asymmetric.)
\end{Remark}

\subsection{Intersection cohomology with compact support as the
  universal Verma module of the \texorpdfstring{${\scW}$}{W}-algebra}
\label{subsec:inters-cohom-with}

We use the associativity of the hyperbolic restriction as in
\subsecref{subsec:localized}. Combining \thmref{thm:FF} with the
result in \subsecref{subsec:integr-form-viras}, we find that
\begin{equation*}
    \bigoplus_d \bigcap H^*_{\TT,c}(\Phi^{P_i}_{L_i,G}(\IC({\Uh{d}})))
\end{equation*}
is a module of $\scW_\bA(\g)$. Here the intersection is taken in
$\bigoplus_d H^*_{\TT,c}(\Phi^{B}_{T,G}(\IC({\Uh{d}})))$. One can check
that this intersection is equal to $IH^*_{\TT,c}(\Uh{d})$. (See the proof of \cite[Prop.~8.1.7]{2014arXiv1406.2381B}.) Therefore

\begin{Theorem}[\protect{\cite[\S8.1]{2014arXiv1406.2381B}}]\label{thm:main}
    $\bigoplus_d IH^*_{\TT,c}(\Uh{d})$ is a $\scW_\bA(\g)$-module. It
    is a highest weight module with the highest weight vector
    $|\mathrm{vac}\rangle$.
\end{Theorem}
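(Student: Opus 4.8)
The plan is to assemble the statement from three ingredients already in hand: the Heisenberg realization of $\bigoplus_d H^*_{\TT,c}(\Phi^B_{T,G}(\IC(\Uh{d})))$, the $\SL(2)$-case of \S\ref{subsec:integr-form-viras}, and the Feigin--Frenkel characterization \thmref{thm:FF}, which I take as the \emph{definition} of $\scW_\bA(\g)$. First I would fix, once and for all, a polarization on the $\Uh{d}$ which is simultaneously compatible with a polarization of $\Gi[3]{d}$ for every rank-$2$ Levi $L_{i,j}$, as in the proof of the Proposition of \S\ref{subsec:localized} (see \cite[\S6.2]{2014arXiv1406.2381B}); this is what makes the various $\Vir_{i,\bA}$ literally the standard integral Virasoro algebras attached to the simple roots inside $\Heis_\bA(\mathfrak h)$. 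With this in place, the ambient object is $\bigoplus_d H^*_{\TT,c}(\Phi^B_{T,G}(\IC(\Uh{d})))$, a highest-weight $\Heis_\bA(\mathfrak h)$-module generated by $|\mathrm{vac}\rangle$ on which the $\widetilde P^i_n = \ve_1\ve_2 P^i_n = P^i_n([0])$ are well defined. By the associativity \propref{prop:trans}, $\Phi^B_{T,G}\cong\Phi^{B_{L_i}}_{T,L_i}\circ\Phi^{P_i}_{L_i,G}$, so $A_i\defeq\bigoplus_d H^*_{\TT,c}(\Phi^{P_i}_{L_i,G}(\IC(\Uh{d})))$ is a subspace of the ambient module, and the decomposition $U^d_{T,G}\cong U^d_{T,L_i}\oplus U^d_{L_i,G}$ gives $\Heis_\bA(\mathfrak h)\cong\Heis_\bA(\mathfrak h_{\algsl_2})\otimes_\bA\Heis_\bA(\alpha_i^\perp)$ with the second factor the integral Heisenberg of $\mathfrak z(\mathfrak l_i)=\alpha_i^\perp$.

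Next I would check that $\scW_\bA(\g)$ preserves $\bigoplus_d IH^*_{\TT,c}(\Uh{d})$. The operators $\widetilde L^i_n=\ve_1\ve_2 L^i_n$, polynomial in the $\widetilde P^i_n$, generate $\Vir_{i,\bA}$, and by \thmref{thm:14.2.3} together with the integral $\SL(2)$-statement of \S\ref{subsec:integr-form-viras} the subalgebra $\Vir_{i,\bA}\otimes_\bA\Heis_\bA(\alpha_i^\perp)$ preserves $A_i$ (the $\Vir_{i,\bA}$-part acting through the $[L_i,L_i]\cong\SL(2)$ factor, the $\Heis_\bA(\alpha_i^\perp)$-part through the $U^\bullet_{L_i,G}$-summands). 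Since each $B_i\defeq\Vir_{i,\bA}\otimes_\bA\Heis_\bA(\alpha_i^\perp)$ preserves $A_i$, the intersection $\scW_\bA(\g)=\bigcap_i B_i$ preserves $\bigcap_i A_i$. The remaining point is the geometric identity
\[
  \bigcap_i A_i \;=\; \bigoplus_d IH^*_{\TT,c}(\Uh{d}),
\]
the intersection taken inside $\bigoplus_d H^*_{\TT,c}(\Phi^B_{T,G}(\IC(\Uh{d})))$; this is proved by tracking, summand by summand, the $\IC(\overline{\Bun[L]{\bullet}})$-isotypic pieces of the hyperbolic restrictions and observing that the $U^d_{L_i,G}$-contributions are exactly those annihilated by $\Vir_{i,\bA}$ and by $\widetilde P^i_n$, as in the proof of \cite[Prop.~8.1.7]{2014arXiv1406.2381B}. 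This yields that $\bigoplus_d IH^*_{\TT,c}(\Uh{d})$ is a $\scW_\bA(\g)$-module.

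For the highest-weight assertion, $|\mathrm{vac}\rangle\in IH^0_{\TT,c}(\Uh{0})$ is killed by the positive modes of $\scW_\bA(\g)$ because those are polynomials in the $\widetilde P^i_n$ with $n>0$ and $|\mathrm{vac}\rangle$ is the Heisenberg highest-weight vector; its $\scW_\bA(\g)$-highest weight is computed from \eqref{eq:29}. To see that it generates, I would compare graded $\bA_T$-ranks: on the geometric side $\sum_d\big(\operatorname{rank}_{\bA_T}IH^*_{\TT,c}(\Uh{d})\big)q^d=\prod_{d>0}(1-q^d)^{-\rank\g}$ by the localization isomorphisms of \eqref{eq:26} together with \eqref{eq:24} and \lemref{lem:calc-Ud}; on the algebraic side the universal Verma module of $\scW_\bA(\g)$ has the same character by the PBW theorem for $\scW_\bA(\g)$ coming from the BRST description. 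The canonical surjection from the Verma module onto $\scW_\bA(\g)|\mathrm{vac}\rangle$ becomes an isomorphism after $\otimes_{\bA_T}\bF_T$ (the generic-level Fock module over $\Heis(\mathfrak h)$ being the $\scW(\g)$-Verma module), and a degree-by-degree rank count then forces $\scW_\bA(\g)|\mathrm{vac}\rangle=\bigoplus_d IH^*_{\TT,c}(\Uh{d})$, following \cite[Prop.~8.1.11]{2014arXiv1406.2381B}.

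I expect the main obstacle to be the geometric identity $\bigcap_i A_i=\bigoplus_d IH^*_{\TT,c}(\Uh{d})$ in the second step: this is the one place where the proof is not formal bookkeeping with the Heisenberg and Virasoro actions, since one must control, uniformly in $i$, precisely which simple summands and which multiplicity spaces $U^d_{L_i,G}$ of each $\Phi^{P_i}_{L_i,G}(\IC(\Uh{d}))$ survive all the intersections at once — and it is exactly here that the compatible choice of polarization is indispensable. Everything else reduces to the already-established statements \thmref{thm:FF}, \thmref{thm:14.2.3}, \lemref{lem:calc-Ud}, \propref{prop:trans}, and a standard character/rank comparison.
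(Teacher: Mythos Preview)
Your proposal is correct and follows essentially the same approach as the paper: combine the associativity of hyperbolic restriction (\propref{prop:trans}), the Feigin--Frenkel characterization \thmref{thm:FF}, and the integral $\SL(2)$ Virasoro result of \subsecref{subsec:integr-form-viras} to show that $\scW_\bA(\g)=\bigcap_i B_i$ preserves $\bigcap_i A_i$, then identify this intersection with $\bigoplus_d IH^*_{\TT,c}(\Uh{d})$ via \cite[Prop.~8.1.7]{2014arXiv1406.2381B} and verify the highest-weight property by the graded-dimension comparison of \cite[Prop.~8.1.11]{2014arXiv1406.2381B}. You have correctly pinpointed the geometric identity $\bigcap_i A_i=\bigoplus_d IH^*_{\TT,c}(\Uh{d})$ as the one step that is not formal bookkeeping.
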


We do not review the highest weight, but it is given by an explicit
formula in equivariant variables as in \eqref{eq:29}. We call
$\bigoplus_d IH^*_{\TT,c}(\Uh{d})$ the universal Verma module as for
the Virasoro algebra.

\section{Concluding remarks}

\subsection{AGT correspondence}\label{subsec:AGT}

As mentioned in Introduction, the AGT correspondence is based on a
hypothetical $6$-dimensional quantum field theory. This theory is
rather difficult to justify its existence even in a physical level of
rigor, as its lagrangian description is unknown. There are surveys on
the AGT correspondences and other related subjects by a group of
physicists \cite{MR3410144}, and another by Tachikawa written for
mathematicians \cite{Tach-review}. Let us try to give a very short
summary of what the author has understood from \cite{Tach-review}. We
will also omit several important points, say topological twists, to
make it in a reasonable size, hence the reader should read it with
care, and consult \cite{Tach-review} for corrections.

The $6$-dimensional quantum field theory is associated with a Dynkin
diagram $\Gamma$ of type $ADE$. Let us denote it by $S_\Gamma$. It
gives a number $Z_{S_\Gamma}(X^6)$ for a compact Riemannian
$6$-dimensional manifold $X$, called the \emph{partition function}. If
$X^6$ has a boundary $\partial X^6$, $Z_{S_\Gamma}(X^6)$ depends on a
Riemannian metric on $\partial X^6$. Hence $Z_{S_\Gamma}(X^6)$ is a
function on the space of Riemannian metric on $\partial X^6$.
It is not an arbitrary function, and is an element of the \emph{space
  of states} or \emph{quantum Hilbert space} $Z_{S_\Gamma}(Y^5)$
associated with a compact $5$-manifold $Y^5 = \partial X^6$. The
reader should be familiar with Atiyah's axiomatic approach to
topological quantum field theories, and the theory $S_\Gamma$ should
rise a similar structure. A function may be multi-valued, or a section
of a line bundle, but we ignore this point, as we will consider it as
an element of $Z_{S_\Gamma}(Y^5)$.

For an application to our study of instanton moduli spaces, we take
$\RR^4\times C$ as a $6$-manifold $X$, where $C$ is a Riemann
surface. We endow $\RR^4$ with the $\CC^\times\times\CC^\times$-action
as in the main body. It can be regarded as a `skeleton' of a
Riemannian metric on $\RR^4$ appeared above. In particular, the
partition function $Z_{S_\Gamma}(\RR^4\times C)$ depends on the
equivariant variable $\ve_1$, $\ve_2$.
On the other hand, as a function on $C$, $Z_{S_\Gamma}(\RR^4\times C)$
depends only on a conformal structure on $C$, hence $C$ is a Riemann
surface instead of a Riemannian $2$-manifold.

We now view $Z_{S_\Gamma}(\RR^4\times C)$ as a partition function
$Z_{S_\Gamma[\RR^4]}(C)$ for a $2$-dimensional conformal field theory
$S_\Gamma[\RR^4]$. The latter $S_\Gamma[\RR^4]$ is called the
\emph{dimension reduction} of $S_\Gamma$ by $\RR^4$, and physicists
accept that this procedure from the $6d$ theory to a $2d$ theory is
possible.

A care is required as $\RR^4$ is noncompact. We need to specify a
point in the \emph{moduli space of vacua}, or rather the partition
function is a function on the moduli space of vacua. It is a finite
dimensional manifold associated with a $4$-dimensional quantum field
theory $S_\Gamma[C]$ obtained as the dimension reduction of $S_\Gamma$
by $C$. Unfortunately there is no mathematically rigorous definition
of the moduli space of vacua for an arbitrary quantum field theory,
but for $S_\Gamma[C]$, it is believed to be an affine space, something
like the base of Hitchin integrable system on $C$ associated with the
group $G$\footnote{Tachikawa told me that we need to specify a group
  upon a reduction.} whose Dynkin diagram is $\Gamma$, or its
modification. It should be also emphasized that the partition function
may have poles, and is defined only \emph{locally} on the moduli space
of vacua.

When $C$ has punctures, fields are allowed to have singularities
there.
We need to specify a type of singularities, which is explained in
\cite[\S3.5]{Tach-review}. It is very roughly corresponds to one for
the moduli space of Higgs bundles. Since Hitchin integrable system
appears as the moduli space of vacua, this sounds reasonable. But
Lusztig-Spaltenstein duality is involved, and the actual relation is
quite far from simple. We will not go further detail.

As a $2$-dimensional quantum field theory, $S_\Gamma[\RR^4]$ has the
space of states associated with $S^1$. A fundamental claim is
\begin{equation*}
    Z_{S_\Gamma[\RR^4]}(S^1) = Z_{S_\Gamma}(\RR^4\times S^1)
    = \prod_d IH^*_{T\times\CC^\times\times\CC^\times}(\Uh{d}) \otimes 
    \bF_T.
\end{equation*}
Namely the direct product of localized equivariant intersection
cohomology groups of instanton moduli spaces is the space of states of
$S_\Gamma[\RR^4]$. Its element is a function in equivariant variables
$\ve_1$, $\ve_2$ as well as $\vec{a}$. The latter is related with the
moduli space of vacua above.

Let us take an elliptic curve $E_\tau$ of period $\tau$ as an example
of $C$. Since $E_\tau$ is obtained by a gluing of a cylinder, we have
\begin{equation*}
    Z_{S_\Gamma[\RR^4]}(E_\tau) = 
    \tr_{Z_{S_\Gamma[\RR^4]}(S^1)}(A),
\end{equation*}
where $A$ is an operator on $Z_{S_\Gamma[\RR^4]}(S^1)$ corresponding
to the cylinder. Note that the base of Hitchin integrable system on
$E_\tau$ is regarded as a single point, hence
$Z_{S_\Gamma[\RR^4]}(E_\tau)$ does not have an extra dependence on
points in the moduli space of vacua.
It is believed that the operator $A$ is given by $e^{2\pi\sqrt{-1}\tau
  d}$ on the summand
$IH^*_{T\times\CC^\times\times\CC^\times}(\Uh{d})$. Thus the trace is
equal to $\prod_{n\ge 1} (1-q^n)^{-\rank G}$ with $q=
e^{2\pi\sqrt{-1}\tau}$ by \subsecref{subsec:calc-hyperb-restr}. This
is essentially a power of the Dedekind eta function $\eta$, which
satisfies $\eta(-1/\tau) = (-\sqrt{-1}\tau)^{1/2}\eta(\tau)$. This is
compatible with the above consideration, as $E_\tau$ and $E_{-1/\tau}$
is isomorphic. In fact, the dimension reduction of $S_\Gamma$ by
$E_\tau$ is known to be the $4$-dimensional $N=4$ supersymmetric gauge
theory, whose topological (Vafa-Witten) twist roughly gives generating
functions of Euler numbers of instanton moduli spaces
\cite{Vafa-Witten}. The conjecture that the generating function is a
modular form is explained as a consequence of $E_\tau\cong E_{-1/\tau}$.

The sum of fundamental cycles $\sum_d [\Uh{d}]$ is a vector in
$Z_{S_\Gamma[\RR^4]}(S^1)$. It is believed that it is
$Z_{S_\Gamma[\RR^4]}(C)$ where $C$ is the unit disk with an
\emph{irregular} puncture at the origin.

For type $A_{r-1}$, we can consider the $r$th power $e(\cV)^r$ of the
equivariant Euler class of the tautological vector bundle $\cV$ on
$\Gi{d}$.
It is believed that the corresponding $C$ is the unit disk with two
regular punctures.
Since $\cV$ is of rank $d$, $e(\cV)^r$ has degree $2dr$, which is the
dimension of $\Gi{d}$. Hence $e(\cV)^r$ lives in
$H^{[0]}_{T\times\CC^\times\times\CC^\times}(\Gi{d})$ in our
convention, hence the $r$th power is expected to be natural. We also
observe that $\cV^{\oplus r}$ is $\Ext^1(\mathcal O_{\proj^2}^{\oplus
  r},\mathcal E_d(-\linf))$, where $\mathcal E_d$ is the universal
bundle over $\proj^2\times\Gi{d}$. Considering $\mathcal
O_{\proj^2}^{\oplus r}$ is the special case of $\mathcal E_d$ with
$d=0$,we can more generally consider the class $\Ext^\bullet(\mathcal
E_{d'},\mathcal E_d(-\linf))$ on $\Gi{d'}\times\Gi{d}$. This class for
$r=1$ is studied in \cite{MR2942794}. See also
\cite{2016LMaPh.tmp...58N}.
The corresponding $C$ is a cylinder with
a regular puncture.
We do not know how to generalize these constructions outside type $A$.

Now it comes to a punch line of this story. The AGT correspondence
predicts that 
\begin{quote}
    $Z_{S_\Gamma[\RR^4]}$ is the conformal field theory
    associated with the $\scW$-algebra for $\Gamma$.
\end{quote}
This is an almost mathematically rigorous statement, except that only
the chiral part of a conformal field theory is usually studied as a
vertex algebra in the mathematical community. Nevertheless we can
still derive rigorous statements, for example our result that
$Z_{S_\Gamma[\RR^4]}[S^1]$ is the Verma module of $\scW$ is the very
first consequence. The vector $\sum_d [\Uh{d}] =
Z_{S_\Gamma[\RR^4]}(C)$ is a characterization in terms of $\scW$,
i.e., it is the Whittaker vector. For further results, besides papers
\cite{MR2942794,2016LMaPh.tmp...58N} mentioned above, there are many
papers in physics literature for type $A$, and also for classical
groups. See Tachikawa's survey in \cite{MR3410144}.

\subsection{Post-requisite}

Further questions and open problems are listed in
\cite[\S1.11]{2014arXiv1406.2381B}.

In order to do research in those
directions, the followings would be necessary besides what are explained in this lecture series.

\begin{itemize}
      \item As explained above, the AGT correspondence predicts that
    $Z_{S_\Gamma[\RR^4]}$ is the conformal field theory associated
    with the $\scW$-algebra. In order to understand it in
    mathematically rigorous way, one certainly needs to know the
    theory of vertex algebras (e.g., \cite{F-BZ}). In fact, we still
    lack a fundamental understanding why equivariant intersection
    cohomology groups of instanton moduli spaces have structures of
    vertex algebras. We do want to have an \emph{intrinsic}
    explanation without any computation, like checking Heisenberg
    commutation relations.

      \item The AGT correspondence was originally formulated in terms
    of Nekrasov partition functions. Their mathematical background is
    given for example in \cite{MR2095899}.

      \item In view of the geometric Satake correspondence for affine
    Kac-Moody groups \cite{braverman-2007}, the equivariant
    intersection cohomology group
    $IH^*_{G\times\CC^\times\times\CC^\times}$ of instanton moduli
    spaces of $\RR^4/\ZZ_{\ell}$ should be understood in terms of
    representations of the affine Lie algebra of $\g$ and the
    corresponding generalized $\scr W$-algebra. We believe that
    necessary technical tools are more or less established in
    \cite{2014arXiv1406.2381B}, but it still needed to be worked out
    in detail. Anyhow, one certainly needs knowledge of $\scr
    W$-algebras in order to study their generalization.
\end{itemize}
	
%
%
%
%
%
%
%
\bibspread





\bibliography{nakajima,mybib,tensor2,MO,coulomb}

\def\indexname{List of notations}
\printindex
\end{document}
